\newtheorem{thm}{Theorem}[section]
\newtheorem{lem}[thm]{Lemma}
\newtheorem{cor}[thm]{Corollary}
\theoremstyle{definition}
\newtheorem{defn}[thm]{Definition}
\newtheorem{ex}[thm]{Example}
\newtheorem{rem}[thm]{Remark}
\theoremstyle{remark}
\newcommand{\gauss}[3]{\genfrac{[}{]}{0pt}{}{#1}{#2}_{#3}}
\newcommand{\gaussm}[2]{\genfrac{[}{]}{0pt}{}{#1}{#2}}
\newcommand{\N}{\mathbb{N}}
\newcommand{\Z}{\mathbb{Z}}
\DeclareMathOperator{\GF}{GF}
\DeclareMathOperator{\LS}{LS}
\DeclareMathOperator{\cm}{cm}
\DeclareMathOperator{\PGammaL}{P\Gamma L}
\DeclareMathOperator{\PGL}{PGL}
\DeclareMathOperator{\GL}{GL}
\let\@@mod\mod
\DeclareRobustCommand{\mod}{\@ifstar\@mods\@@mod}
\def\@mods#1{\mkern4mu({\operator@font mod}\mkern 6mu#1)}
\title{Large sets of subspace designs}
\author[M. Braun]{Michael Braun}
\address{Michael Braun\\Hochschule Darmstadt\\Schöfferstr. 8b\\D-64295 Darmstadt\\Germany}
\email{michael.braun@h-da.de}
\author[M. Kiermaier]{Michael Kiermaier}
\address{Michael Kiermaier\\Mathematisches Institut\\Universität Bayreuth\\D-95440 Bayreuth\\Germany} 
\email{michael.kiermaier@uni-bayreuth.de} 
\urladdr{http://www.mathe2.uni-bayreuth.de/michaelk/}
\author[A. Kohnert]{Axel Kohnert}
\author[R. Laue]{Reinhard Laue}
\address{Reinhard Laue\\Institut für Informatik\\Universität Bayreuth\\D-95440 Bayreuth\\Germany} 
\email{laue@uni-bayreuth.de} 
\date{\today}
\dedicatory{To the memory of Axel Kohnert 1962--2013}
\subjclass[2010]{Primary 51E20; Secondary 05B05, 05B25, 11Txx}
\keywords{$q$-analog, combinatorial design, subspace design, large set}
\begin{document}
\begin{abstract}
In this article, three types of joins are introduced for subspaces of a vector space.
Decompositions of the Graßmannian into joins are discussed.
This framework admits a generalization of large set recursion methods for block designs to subspace designs.

We construct a $2$-$(6,3,78)_5$ design by computer, which corresponds to a halving $\LS_5[2](2,3,6)$.
The application of the new recursion method to this halving and an already known $\LS_3[2](2,3,6)$ yields two infinite two-parameter series of halvings $\LS_3[2](2,k,v)$ and $\LS_5[2](2,k,v)$ with integers $v\geq 6$, $v\equiv 2\mod* 4$ and $3\leq k\leq v-3$, $k\equiv 3\mod* 4$.

Thus in particular, two new infinite series of nontrivial subspace designs with $t = 2$ are constructed.
Furthermore as a corollary, we get the existence of infinitely many nontrivial large sets of subspace designs with $t = 2$.

\end{abstract}

\maketitle

\section{Introduction}
\subsection{History}
Due to the connection to network codes, there has been a growing interest in $q$-analogs of block designs (subspace designs) lately.
The earliest reference is \cite{Cameron74}.
However, the idea is older, since it is stated that \enquote{Several people have observed that the concept of a $t$-design can be generalised [...]}.
They have also been mentioned in a more general context in \cite{Delsarte-1976}.
An introduction can be found in \cite[Day~4]{suzuki-fivedays}.

The first nontrivial subspace design with $t=2$ has been constructed in \cite{Thomas} and the first one with $t=3$ in \cite{BKL05}.
More constructions based on the method of \cite{Miyakawa-Munemasa-Yoshiara-1995,BKL05} have been presented in \cite{MBraun05, SBraunDiplomarbeit}.
In \cite{BEOVW13}, the first $q$-analog of a Steiner system with $t\geq 2$ has been constructed by applying the Kramer-Mesner method described in \cite{Miyakawa-Munemasa-Yoshiara-1995,BKL05}.
Furthermore, in \cite{FLV} it was shown that simple $t$-designs exist for every value of $t$.
This is a $q$-analog of Teirlinck's theorem \cite{T}, however with the difference that the proof in \cite{FLV} is not constructive.

The first large set of subspace designs was constructed in \cite{MBraun05} and a further one in \cite{BKOW}.
In \cite{trivial}, derived and residual subspace designs, and in \cite{Kiermaier-Pavcevic-2014}, intersection numbers for subspace designs have been studied.

To our knowledge, besides \cite{FLV} the only known infinite series of nontrivial subspace designs with $t\geq 2$ so far are the following:
In \cite{Thomas} a series of $2$-designs was constructed for $q = 2$ and generalized to all prime powers $q$ in \cite{Suzuki-1} and \cite{Suzuki-2}.
Based on these designs, the recursive construction in \cite{Itoh} provides further $2$-designs.

\subsection{Overview}
In this article, we will construct two new infinite two-parameter series of subspace designs with $t = 2$.
More precisely, an infinite series of halvings $\LS_3[2](2,k,v)$ and $\LS_5[2](2,k,v)$ with integers $v\geq 6$, $v\equiv 2\mod* 4$ and $3\leq k\leq v-3$, $k\equiv 3\mod* 4$ will be given.%
\footnote{For an explanation of the symbol $\LS_q[N](t,k,v)$, see Definition~\ref{def:ls}.}

The first step is the construction of the smallest members of both series ($\LS_3[2](2,3,6)$ and $\LS_5[2](2,3,6)$).
In the first case, this large set is already known~\cite{MBraun05}.
In the second case, it is constructed by computer using the method of Kramer and Mesner \cite{Kramer-Mesner}, prescribing some subgroup of the normalizer of a Singer cycle as a group of automorphisms.

To extend both halvings to an infinite series, recursion methods for large sets of subspace designs will be developed.
For ordinary block designs, this idea goes back to Teirlinck \cite{T89}.
Our approach is based on decompositions of the Graßmannian into \emph{joins} and can be seen as a $q$-analog of the strategy of Ajoodani-Namini and Khosrovshahi \cite{Ajoodani91, Ajoodani94, Ajoodani96}.
A survey can be found in \cite{Khosrovshahi-TayfehRezaie-2009}.

\subsection{Outline}
Section~\ref{sect:prelim} provides the required fundamentals about the subspace lattice, canonical matrices of subspaces, subspace designs and their large sets.
In Section~\ref{sect:decompositions}, the ordinary join, the covering join and the avoiding join of subspaces are introduced.
The theory is developed in a basis-free manner.
Whenever possible, moreover a representation based on canonical matrices is given, which leads to a connection to paths in $q$-grid graphs.
As an important component of the later constructions, decompositions of the Graßmannian into the three types of joins are studied.

The next Section~\ref{sect:n-t-partitionable} introduces $(N,t)$-partitionable sets of subspace designs.
It may be understood as a weakening of the notion of a large set.
The Basic Lemma~\ref{lem:basic} states that the property of being $(N,t)$-partitionable is inherited from subspaces to joins of them.
Together with the decompositions of Section~\ref{sect:decompositions}, it provides a fairly general machinery for the recursive construction of large sets, which has proven quite powerful for ordinary block designs.

Based on computational results, in Section~\ref{sect:halving_computer_constructions} two theorems about halvings with the parameters $\LS_3[2](2,3,6)$ and $\LS_5[2](2,3,6)$ are proven.
In Section~\ref{sect:series}, the recursive construction method of Section~\ref{sect:n-t-partitionable} is applied to these two halvings.
The construction is carried out in two steps, the first one based on a decomposition into avoiding joins, the second one based on a decomposition into covering joins.
The result consists in two new infinite two-parameter series of halvings in Corollary~\ref{cor:halving_series}, which are also infinite series of nontrivial subspace designs with $t = 2$.
The article is concluded in Section~\ref{sect:conclusion} with a few open questions arising from the present work.

\subsection{Dedication}
This paper is dedicated to the memory of our estimated friend and colleague Axel Kohnert.
Axel has passed away on 11 Dec.\ 2013 in the aftermath of a tragic accident in Oct.\ 2013 at the age of 51.
He was one of the initiators of the research of large sets of subspace designs.
Investigating the recursion method by Ajoodani-Namini and Khosrovshahi for applicability in the $q$-analog situation, he developed the decomposition technique based on paths in the $q$-grid graph found in Section~\ref{sect:decompositions}.
Sadly, it was not granted to him to witness the full consequences of his idea.

\section{Preliminaries}
\label{sect:prelim}
If not specified otherwise, $q\neq 1$ will always be a prime power, $v$ a nonnegative integer and $V$ a vector space over $\GF(q)$ of dimension $v$.

\subsection{The subspace lattice}
For an integer $k$, the set of all subspaces of $V$ of dimension $k$ is known as the \emph{Graßmannian} and will be denoted by $\gauss{V}{k}{q}$.%
\footnote{For $k < 0$ and $k > \dim(V)$, this implies $\gauss{V}{k}{q} = \emptyset$.}
For simplicity, its elements will be called \emph{$k$-subspaces}.
The size of $\gauss{V}{k}{q}$ is given by the \emph{Gaussian binomial coefficient}
\[
\gauss{v}{k}{q}
= \prod_{i = 0}^{k-1} \frac{q^{v - i} - 1}{q^{i+1} - 1}
= \begin{cases}
	\frac{(q^v - 1)(q^{v-1} - 1)\cdot\ldots\cdot(q^{v - k + 1} - 1)}{(q-1)(q^2 - 1)\cdot\ldots\cdot (q^k - 1)} & \text{if } k\in\{0,\ldots,v\}\text{;} \\
     0 & \text{otherwise.}
\end{cases}
\]
The complete subspace lattice of $V$ will be denoted by $\mathcal{L}(V)$.
There are good reasons to interpret the subspace lattice $\mathcal{L}(V)$ as the $q$-analog of the subset lattice $\mathcal{L}(X)$ where $X$ is a set of size $\#X = \dim V$ \cite{Tits,Goldman-Rota-1970-StudApplM49[3]:239-258,Cohn-2004}.

The $v\times v$ unit matrix will be denoted by $E_v$.
Furthermore, we will denote the standard basis of $\GF(q)^v$ by $e_1,\ldots,e_v\in\GF(q)^v$ and for $i\in\{0,\ldots,v\}$ we will use the notation $I_i = \{v-i+1,\ldots,v\}$ and $V_i = \langle e_j \mid j \in I_i\rangle$.
Note that $\dim(V_i) = \#I_i = i$.

The lattice $\mathcal{L}(V)$ is modular.
We will make use of the modularity law, which states that for all $A,B,C\in\mathcal{L}(V)$ with $A \leq C$,
\[
	A + (B\cap C) = (A + B)\cap C\text{.}
\]
In contrast to the subset lattice, $\mathcal{L}(V)$ is not distributive for $v\geq 2$.

By the fundamental theorem of projective geometry, for $v\geq 3$ the automorphism group of $\mathcal{L}(V)$ is given by the natural action of $\PGammaL(V)$ on $\mathcal{L}(V)$.
Furthermore, $\mathcal{L}(V)$ is self-dual.
An antiautomorphism of $\mathcal{L}(V)$ will be denoted by $\perp$.
Since two antiautomorphisms of $\mathcal{L}(V)$ differ only by an automorphism of $\mathcal{L}(V)$, in coordinate-free settings the exact choice does not really matter.
For a concrete construction, pick any non-singular bilinear form $\beta$ on $V$, and set
\[
    U^\perp = \{x\in V \mid \beta(x,y) = 0 \text{ for all }y\in U\}\text{.}
\]

\subsection{The reduced row echelon form}
In this section, $F$ denotes a field.
\begin{defn}
A $(k\times v)$-matrix $A = (a_{ij})_{i\in\{1,\ldots,k\},j\in\{1,\ldots,v\}}$ over $F$ is said to be in \emph{reduced (left) row echelon form} if there is an increasing integer sequence $1 \leq \pi_1 < \ldots < \pi_k\leq v$ of \emph{pivot positions} such that for each $i\in\{1,\ldots,k\}$, the $\pi_i$-th column of $A$ is the $i$-th standard vector in $F^k$ and the \emph{pivot entry} $a_{i,\pi_i} = 1$ is the first non-zero entry in the $i$-th row.
In this case, the set of pivot positions will be denoted by $\pi(A)$.
\end{defn}

The pivot positions of a matrix in reduced row echelon form are uniquely determined, so $\pi(A)$ is well-defined.
Each subspace $U \leq F^v$ is the row space of a unique matrix in reduced row echelon form. (The zero space being the row space of the somewhat artificial ($0\times v$)-matrix.)
This matrix will be called the \emph{canonical matrix} of $U$ and denoted by $\cm(U)$.
Thus, the reduced row echelon form provides a convenient way for the representation of subspaces.
The mapping
\[
    \pi : (\mathcal{L}(F^v),\leq) \to (\mathcal{L}(\{1,\ldots,v\}), \subseteq)
\]
is order-preserving.
For more details, see \cite[Sect.~2.2]{HKK-2013}.

The importance of the subspaces $V_i$ comes from the following easily checked property:
\begin{lem}
	\label{lem:Vi_property}
	Let $i\in\{0,\ldots,v\}$ and $U \leq \GF(q)^v$.
	The canonical matrix of $U$ has a unique block decomposition
	\[
		\cm(U) = \begin{pmatrix}
			A & B \\
			\mathbf{0} & C
		\end{pmatrix}
	\]
	where $A$ and $C$ are in reduced row echelon form, $A$ has $v-i$ columns and $C$ has $i$ columns.  
	We have
	\[
		\cm(U \cap V_i) = \begin{pmatrix}\mathbf{0} & C\end{pmatrix}
		\qquad\text{and}\qquad
		\cm(U + V_i) = \begin{pmatrix}A & \mathbf{0}\\\mathbf{0} & E_i\end{pmatrix}\text{.}
	\]
\end{lem}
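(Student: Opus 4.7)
The plan is to read everything off the pivot structure of $\cm(U)$. Let $\pi(U) = \{\pi_1 < \ldots < \pi_k\}$ be its pivot positions, and partition these into the \emph{left} pivots $\pi(U)\cap\{1,\ldots,v-i\}$ and the \emph{right} pivots $\pi(U)\cap I_i$. Since pivots occur in strictly increasing order, the rows of $\cm(U)$ whose pivot lies in the left part come first, and each such row has the form $(a_l\mid b_l)\in\GF(q)^{v-i}\times\GF(q)^{i}$. Each later row has its pivot in $I_i$, so by the defining property of a pivot its first $v-i$ entries vanish, giving a row of the form $(\mathbf{0}\mid c_l)$. This yields immediately the block form $\cm(U) = \begin{pmatrix}A & B\\\mathbf{0} & C\end{pmatrix}$. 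The matrix $A$ inherits the pivot positions $\pi(U)\cap\{1,\ldots,v-i\}$ and is therefore in reduced row echelon form; likewise $C$ is in reduced row echelon form with pivots $\{p - (v-i) : p\in \pi(U)\cap I_i\}$. Uniqueness of the decomposition is trivial since the column split is fixed at position $v-i$.

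For the formula $\cm(U\cap V_i) = (\mathbf{0}\mid C)$, first observe that this matrix is already in reduced row echelon form (its pivots are simply those of $C$ shifted by $v-i$). It remains to identify the row space. A linear combination of the rows of $\cm(U)$ lies in $V_i$ exactly when its first $v-i$ coordinates vanish. The rows with left pivots are the only ones contributing to those columns, and they contribute independently at their respective pivot positions; hence all their coefficients must be zero. Consequently $U\cap V_i$ is spanned precisely by the rows $(\mathbf{0}\mid c_l)$, i.e.\ by the rows of $(\mathbf{0}\mid C)$.

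For the formula $\cm(U+V_i) = \begin{pmatrix}A & \mathbf{0}\\\mathbf{0} & E_i\end{pmatrix}$, the matrix on the right is clearly in reduced row echelon form, with pivot set $(\pi(U)\cap\{1,\ldots,v-i\})\cup I_i$. For the row space, note that the rows of $E_i$ (padded with zeros) span $V_i$, so the right-pivot rows $(\mathbf{0}\mid c_l)$ of $\cm(U)$ already lie in this span; the left-pivot rows $(a_l\mid b_l) = (a_l\mid\mathbf{0}) + (\mathbf{0}\mid b_l)$ are obtained from the padded rows of $A$ together with elements of $V_i$. Thus the claimed matrix contains a generating set of $U+V_i$, and the reverse inclusion is immediate because each of its rows lies in $U+V_i$.

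The argument is almost entirely formal once the pivot partition is fixed, so I do not anticipate any real obstacle; the only point deserving care is the clean bookkeeping of indices and the verification that the blocks $A$, $C$, and $(\mathbf{0}\mid C)$ really inherit reduced row echelon form from $\cm(U)$, which boils down to checking that \enquote{first nonzero entry} and \enquote{zeros above pivots} are preserved when a block of columns (or rows) is cut off.
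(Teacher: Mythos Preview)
Your argument is correct and complete; the bookkeeping on pivot positions, the verification that $A$, $C$, and $(\mathbf{0}\mid C)$ inherit reduced row echelon form, and the two row-space identifications all go through exactly as you outline. The paper itself offers no proof of this lemma, introducing it merely as an \enquote{easily checked property}, so your write-up is in fact more detailed than what the paper provides.
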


When taking duals, it is natural to switch from the reduced left row echelon to the reduced \emph{right} row echelon form:

\begin{lem}
	\label{lem:cm_dual}
	Let $U\leq V$ and $U^\perp$ the dual subspace with respect to the standard bilinear form $\left\langle(x_1,\ldots,x_v), (y_1,\ldots,y_v)\right\rangle = x_1 y_1 + \ldots + x_v y_v$ on $V$.
	Let $\cm^\perp(U^\perp)$ be the unique generator matrix of $U^\perp$ in \emph{right} row echelon form.
	The positions of the pivot columns of $\cm^\perp(U^\perp)$ are given by $\{1,\ldots,v\}\setminus\pi(U)$.
	The non-pivot columns of $\cm^\perp(U^\perp)$ are given by the columns of $-A^\top$, where $A$ denotes the matrix consisting of the non-pivot columns of $\cm(U)$.
\end{lem}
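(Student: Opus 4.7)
The plan is to write down an explicit matrix $M\in F^{(v-k)\times v}$, where $k=\dim U$, and identify it with $\cm^\perp(U^\perp)$. Write $\pi(U)=\{p_1<\ldots<p_k\}$ and let $q_1<\ldots<q_{v-k}$ enumerate the complementary positions, so that $a_{i,j}:=\cm(U)_{i,q_j}$ is the $(i,j)$-entry of $A$. Define $M$ column-wise by declaring its $q_j$-th column to be the standard basis vector $e_j\in F^{v-k}$, and its $p_i$-th column to be the $i$-th column of $-A^\top$, namely $(-a_{i,1},\ldots,-a_{i,v-k})^\top$. The whole lemma is then the assertion $M=\cm^\perp(U^\perp)$, which splits into checking that the rows of $M$ generate $U^\perp$ and that $M$ is in reduced right row echelon form.

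The first half is a short bookkeeping calculation of $M\cdot\cm(U)^\top$. Its $(j,i)$-entry receives a contribution of $-a_{i,j}$ from column $p_i$ (where $\cm(U)$ has its only $1$ in the pivot column $p_i$ and $M$ has the entry $-a_{i,j}$) and a contribution of $+a_{i,j}$ from column $q_j$ (where $M$ has its only $1$ in column $q_j$ and $\cm(U)$ has the entry $a_{i,j}$); all other columns contribute zero, and the two terms cancel. Hence every row of $M$ lies in $U^\perp$. Since the columns $e_1,\ldots,e_{v-k}$ appear among the columns of $M$, the rows of $M$ are linearly independent, so by counting dimensions against $\dim U^\perp=v-k$ they form a basis of $U^\perp$.

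The second half, which I expect to carry the actual content of the proof, is to verify that $M$ is in reduced right row echelon form with pivot set $\{q_1,\ldots,q_{v-k}\}$. The placement of the standard vectors at columns $q_j$ immediately yields pivot entries equal to $1$, zeros in the rest of each pivot column, and an increasing sequence of pivot positions top-to-bottom. The single delicate point is that row $j$ of $M$ must vanish strictly to the right of $q_j$. Among pivot columns $q_{j'}$ with $j'>j$ this is automatic because $e_{j'}$ has a zero in row $j$. For a non-pivot column $p_i$ with $p_i>q_j$ the row-$j$ entry of $M$ equals $-a_{i,j}=-\cm(U)_{i,q_j}$, so one needs $\cm(U)_{i,q_j}=0$ whenever $q_j<p_i$. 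This is exactly where the left-echelon structure of $\cm(U)$ enters: the pivot of row $i$ of $\cm(U)$ is at $p_i$, so every entry of row $i$ strictly left of $p_i$ vanishes, and $q_j<p_i$ gives the required zero. This implication is the only non-formal ingredient in the argument. Uniqueness of the reduced right row echelon form now forces $M=\cm^\perp(U^\perp)$, giving both claims of the lemma at once.
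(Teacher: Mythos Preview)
The paper states this lemma without proof, so there is nothing to compare against. Your argument is correct and is the natural direct verification: construct the candidate matrix $M$, check $M\cdot\cm(U)^\top=0$ by the cancellation you describe, count dimensions, and then use the left-echelon structure of $\cm(U)$ (entries of row $i$ vanish left of $p_i$) to see that row $j$ of $M$ vanishes right of $q_j$. The only point worth a remark is the row-ordering convention for the reduced \emph{right} row echelon form; you take the pivot in row $j$ to be the rightmost nonzero entry with pivot positions increasing top-to-bottom, which is one of the two standard conventions (the other reverses the rows). Either way the content of the lemma---the pivot set $\{1,\ldots,v\}\setminus\pi(U)$ and the non-pivot columns $-A^\top$---is unaffected, so this is harmless.
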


\subsection{Subspace designs}

The following definition is the $q$-analog of an ordinary set-theoretic design:
\begin{defn}
A pair $(V,\mathcal{B})$ with $\mathcal{B} \subseteq \gauss{V}{k}{q}$ is called a \emph{$t$-$(v,k,\lambda)_q$ subspace design}, if for each $T\in\gauss{V}{t}{q}$ there are exactly $\lambda$ elements of $\mathcal{B}$ containing $T$.
\end{defn}

For a $t$-$(v,k,\lambda)_q$ subspace design $(V,\mathcal{B})$, its \emph{dual design} $(V,\mathcal{B})^\perp = (V,\mathcal{B}^\perp)$ with $\mathcal{B}^\perp = \{B^\perp \mid B\in\mathcal{B}\}$ is a subspace design with the parameters $t$-$(v,v-k,\lambda\cdot\gauss{v-k}{t}{q}/\gauss{k}{t}{q})_q$ \cite[Lemma~4.2]{Suzuki-1990}.

\begin{defn}
\label{def:ls}
A partition of $\gauss{V}{k}{q}$ into $N$ subspace designs, each with the parameters $t$-$(v,k,\lambda)_q$, is called a \emph{large set} $\LS_q[N](t,k,v)$.
More precisely, it is a collection
\[
    \{(V,\mathcal{B}_1), (V,\mathcal{B}_2), \ldots, (V,\mathcal{B}_N)\}
\]
of $t$-$(v,k,\lambda)_q$ designs such that $\{\mathcal{B}_1, \mathcal{B}_2,\ldots, \mathcal{B}_N\}$ is a partition of $\gauss{V}{k}{q}$.
In the case $N = 2$, the large set is also called a \emph{halving}.
\end{defn}

\begin{rem}
\label{rem:ls}
\item
\begin{enumerate}[(i)]
\item\label{rem:ls:lambda}
Note that the parameter $\lambda$ does not appear in the parameter set $\LS_q[N](t,k,v)$ of a large set.
This is because under the definition of a large set, $\lambda = \gauss{v-t}{k-t}{q} / N$ is already determined by the other parameters.
\item 
Large sets with $N = 1$ are called \emph{trivial}.
For all integers $0 \leq t \leq k \leq v$, the unique $\LS[1]_q(t,k,v)$ is given by $\left(V,\{\gauss{V}{k}{q}\}\right)$. 
\item\label{rem:ls:halving} For every $t$-$(v,k,\lambda)_q$ subspace design $(V,\mathcal{B})$, the \emph{supplementary} design $(V,\gauss{V}{k}{q}\setminus\mathcal{B})$ is again a subspace design with the parameters $t$-$(v,k,\gauss{v-t}{k-t}{q} - \lambda)_q$.
So in the case $\lambda = \gauss{v-t}{k-t}{q} / 2$, $\{ (V,\mathcal{B}), (V,\gauss{V}{k}{q}\setminus\mathcal{B})\}$ is a halving, showing that $t$-$(v,k,\gauss{v-t}{k-t}{q} / 2)_q$ subspace designs and halvings $\LS_q[2](t,k,v)$ are \enquote{the same.}
\end{enumerate}
\end{rem}

\begin{lem}
	\label{lem:ls_integrality}
	If there exists an $\LS_q[N](t,k,v)$, then for all $i\in\{0,\ldots,t\}$
	\[
		N\mid \gauss{v-i}{k-i}{q}\text{.}
	\]
\end{lem}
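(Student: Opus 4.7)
The plan is to deduce the divisibility from the fact that each design in the large set is simultaneously a $t$-design and, more usefully, an $i$-design for every $i \leq t$, and then add up the ``replication numbers'' over the $N$ designs.

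Concretely, I would first record (or quickly prove by a standard double-count) that any $t$-$(v,k,\lambda)_q$ subspace design $(V,\mathcal B)$ is also an $i$-design for every $i\in\{0,\ldots,t\}$: there is a constant $\lambda_i\in\N$ such that every $i$-subspace of $V$ is contained in exactly $\lambda_i$ blocks of $\mathcal B$. One gets this by double-counting incident pairs $(S,B)$ with $T\leq S\leq B$, $\dim S=t$, $B\in\mathcal B$, for a fixed $i$-subspace $T$; this yields
\[
    \lambda_i\cdot\gauss{k-i}{t-i}{q}=\lambda\cdot\gauss{v-i}{t-i}{q}\text{,}
\]
so in particular $\lambda_i$ is a well-defined nonnegative integer depending on $i$ but not on $T$.

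Now let $\{(V,\mathcal B_1),\ldots,(V,\mathcal B_N)\}$ be the given $\LS_q[N](t,k,v)$, with common parameter $\lambda=\gauss{v-t}{k-t}{q}/N$. Each $(V,\mathcal B_j)$ is a $t$-$(v,k,\lambda)_q$ design, hence, by the observation above, an $i$-$(v,k,\lambda_i)_q$ design with the \emph{same} value of $\lambda_i$ for every $j$. Fix any $i$-subspace $T\leq V$. The number of $k$-subspaces of $V$ containing $T$ is $\gauss{v-i}{k-i}{q}$, and this set is partitioned by $\mathcal B_1,\ldots,\mathcal B_N$; each part contributes exactly $\lambda_i$ blocks through $T$. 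Summing gives
\[
    N\,\lambda_i=\gauss{v-i}{k-i}{q}\text{,}
\]
and since $\lambda_i\in\N$ the divisibility $N\mid\gauss{v-i}{k-i}{q}$ follows.

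There is really no obstacle here; the only step that requires care is the derived-design identity, and even that is just a single double-count. I would present the argument in the order above: first the $i$-design property of a single $t$-design, then a one-line summation over the $N$ parallel classes of the large set.
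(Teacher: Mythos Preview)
Your argument is correct and essentially matches the paper's proof. The paper cites \cite[Lemma~4.1(1)]{Suzuki-1990} for the fact that $\lambda_i = \lambda\gauss{v-i}{k-i}{q}/\gauss{v-t}{k-t}{q}$ is an integer and then substitutes $\lambda = \gauss{v-t}{k-t}{q}/N$; you instead prove the $i$-design property by a direct double count and then obtain $N\lambda_i = \gauss{v-i}{k-i}{q}$ by summing over the $N$ parts of the partition. Both routes amount to the same identity $\lambda_i = \gauss{v-i}{k-i}{q}/N$, so there is no substantive difference.
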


\begin{proof}
	An $\LS_q[N](t,k,v)$ consists of $t$-$(v,k,\lambda)_q$ subspace designs with $\lambda = \gauss{v-t}{k-t}{q} / N$.
	By \cite[Lemma~4.1(1)]{Suzuki-1990}, the numbers $\lambda\gauss{v-i}{k-i}{q} / \gauss{v-t}{k-t}{q}$ must be integers.
\end{proof}

In the case that the conditions of Lemma~\ref{lem:ls_integrality} are met, the parameter set $\LS_q[N](t,k,v)$ is called \emph{admissible}.
If an $\LS_q[N](t,k,v)$ in fact exists, the parameter set $\LS_q[N](t,k,v)$ is called \emph{realizable}.
By Lemma~\ref{lem:ls_integrality}, realizability implies admissibility.
For $t = 0$, also the converse is true:

\begin{lem}
	The large set parameters $\LS_q[N](0,k,v)$ are realizable if and only if they are admissible.
\end{lem}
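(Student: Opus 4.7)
The plan is to observe that for $t=0$ the subspace design condition degenerates almost entirely, so the problem reduces to an elementary counting question.

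First I would unpack the definition of a $0$-$(v,k,\lambda)_q$ subspace design. The Graßmannian $\gauss{V}{0}{q}$ contains only the zero subspace, and every $k$-subspace of $V$ contains the zero subspace. Hence the defining condition \enquote{each $T\in\gauss{V}{0}{q}$ is contained in exactly $\lambda$ blocks} is equivalent to $\#\mathcal{B}=\lambda$. In other words, a $0$-$(v,k,\lambda)_q$ subspace design is nothing but an arbitrary $\lambda$-subset of $\gauss{V}{k}{q}$.

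Consequently, an $\LS_q[N](0,k,v)$ is just a partition of $\gauss{V}{k}{q}$ into $N$ subsets, each of size $\lambda=\gauss{v}{k}{q}/N$. Such a partition exists if and only if $N$ divides $\gauss{v}{k}{q}$. Since for $t=0$ the range $i\in\{0,\ldots,t\}$ in Lemma~\ref{lem:ls_integrality} reduces to the single value $i=0$, admissibility means exactly $N\mid\gauss{v}{k}{q}$. This matches the existence condition, yielding the claimed equivalence.

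There is no real obstacle: once the degeneracy of $t=0$ designs is recognized, the statement is immediate, and the only nonvacuous direction (admissibility $\Rightarrow$ realizability) is supplied by the trivial observation that any finite set whose cardinality is divisible by $N$ admits a partition into $N$ equally sized parts.
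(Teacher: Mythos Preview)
Your proof is correct and follows essentially the same approach as the paper: both recognize that for $t=0$ the design condition reduces to a cardinality constraint, so an $\LS_q[N](0,k,v)$ is simply a partition of $\gauss{V}{k}{q}$ into $N$ equal parts, which exists precisely when $N\mid\gauss{v}{k}{q}$, i.e., when the parameters are admissible.
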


\begin{proof}
	The design property for $t=0$ just means that all designs in the large set are of the same size.
	So the large set exists if and only if the total number $\gauss{v}{k}{q}$ of $k$-subsets is divisible by $N$, which is the condition in Lemma~\ref{lem:ls_integrality}.
\end{proof}

In the classical case $q = 1$, the above Lemma is still true for $t = 1$ \cite{Baranyai}, meaning that an $\LS[N](1,k,v)$ exists if and only if the parameters are admissible.
It is an open problem whether this result carries over to the $q$-analog case.

For checking the divisibility of Gaussian binomial coefficients, the generalization of Kummer's Theorem in~\cite{Fray,Knuth-Wilf-1989-JRAM396:212-219} is useful.
As a result, for fixed $N$, $t$, and $q$, the set of all $v$ and $k$ such that $\LS_q[N](t,k,v)$ is admissible carries kind of a fractal structure, see e.g.\ Table~\ref{tbl:ueberblick}.
A detailed discussion of this phenomenon for ordinary binomial coefficients and $N = 2$ can be found in~\cite{Wolfram}.

\begin{lem}
	\label{lem:ls_divisorsN}
	If $\LS_q[N](t,k,v)$ is realizable, then for each divisor $d\mid N$, $\LS_q[d](t,k,v)$ is realizable, too.
\end{lem}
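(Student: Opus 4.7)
The plan is to merge blocks of the $\LS_q[N](t,k,v)$ into larger unions to form an $\LS_q[d](t,k,v)$. Write $N = d m$ and let $\{(V,\mathcal{B}_1),\ldots,(V,\mathcal{B}_N)\}$ be a realization of the $\LS_q[N](t,k,v)$, so each $\mathcal{B}_i$ is a $t$-$(v,k,\lambda)_q$ design with $\lambda = \gauss{v-t}{k-t}{q}/N$.

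Partition the index set $\{1,\ldots,N\}$ arbitrarily into $d$ classes $S_1,\ldots,S_d$ of size $m$ each, and define $\mathcal{C}_j = \bigcup_{i\in S_j}\mathcal{B}_i$ for $j\in\{1,\ldots,d\}$. Since the $\mathcal{B}_i$ are pairwise disjoint and partition $\gauss{V}{k}{q}$, so do the $\mathcal{C}_j$. For each $T\in\gauss{V}{t}{q}$, the number of elements of $\mathcal{C}_j$ containing $T$ is the sum over $i\in S_j$ of the corresponding counts for $\mathcal{B}_i$, hence $m\lambda$, independent of $T$. So $(V,\mathcal{C}_j)$ is a $t$-$(v,k,m\lambda)_q$ design.

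Finally, $m\lambda = (N/d)\cdot\gauss{v-t}{k-t}{q}/N = \gauss{v-t}{k-t}{q}/d$ is exactly the parameter $\lambda$ required for an $\LS_q[d](t,k,v)$, so the collection $\{(V,\mathcal{C}_1),\ldots,(V,\mathcal{C}_d)\}$ is a realization.

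There is essentially no obstacle here: the argument is just the observation that a disjoint union of $t$-designs with a common block size and a common $\lambda$ is again a $t$-design, together with the fact that the arithmetic of the new $\lambda$ works out because $d$ divides $N$.
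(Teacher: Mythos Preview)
Your argument is correct and is exactly the intended one; the paper states this lemma without proof, as the observation that unions of $N/d$ pairwise disjoint $t$-$(v,k,\lambda)_q$ designs yield $d$ disjoint $t$-$(v,k,(N/d)\lambda)_q$ designs partitioning $\gauss{V}{k}{q}$ is considered immediate.
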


\begin{lem}[{{\cite[Cor.~19]{trivial}}}]
\label{lem:resLS}
If there exists an $\LS_q[N](t,k,v)$ for $t\ge 1$ then there exists
\begin{enumerate}[(i)]
\item the \emph{dual} large set with parameters $\LS_q[N](t,v-k,v)$;
\item the \emph{reduced} large set with parameters $\LS_q[N](t-1,k,v)$;
\item a \emph{derived} large set with parameters $\LS_q[N](t-1,k-1,v-1)$;
\item a \emph{residual} large set with parameters $\LS_q[N](t-1,k,v-1)$.
\end{enumerate}
\end{lem}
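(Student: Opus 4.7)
In each of the four parts I would take the partition $\{\mathcal{B}_1,\ldots,\mathcal{B}_N\}$ of $\gauss{V}{k}{q}$, apply the relevant single-design construction blockwise, and check that the new collection still partitions the appropriate Graßmannian. Each case splits cleanly into a local verification (that every piece has the claimed design parameters) and a global verification (that the partition survives); the latter is always an immediate consequence of a bijection or an inclusion between Graßmannians, so essentially all the work sits in the local verification, which is already supplied by results recalled or cited in the paper.

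For~(i) the blockwise map is $B\mapsto B^\perp$. Locally, the dualized parameters are given by the Suzuki lemma recalled just after the definition of a subspace design. Globally, $\perp\colon\gauss{V}{k}{q}\to\gauss{V}{v-k}{q}$ is a bijection, so $\{\mathcal{B}_1^\perp,\ldots,\mathcal{B}_N^\perp\}$ partitions $\gauss{V}{v-k}{q}$. For~(ii) the blockwise map is the identity: locally, every $t$-$(v,k,\lambda)_q$ design is automatically a $(t-1)$-$(v,k,\lambda')_q$ design with $\lambda'=\lambda(q^{v-t+1}-1)/(q^{k-t+1}-1)$; globally, the partition is unchanged.

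For~(iii) I would fix any $P\in\gauss{V}{1}{q}$ and define $\mathcal{B}_i^{(P)}=\{B/P : B\in\mathcal{B}_i,\,P\leq B\}\subseteq\gauss{V/P}{k-1}{q}$. The local claim is the derived-design part of \cite[Cor.~19]{trivial}, and globally $B\mapsto B/P$ is a bijection from $\{B\in\gauss{V}{k}{q}:P\leq B\}$ onto $\gauss{V/P}{k-1}{q}$, so the partition descends. For~(iv) I would fix a hyperplane $H\leq V$ and take $\mathcal{B}_i^{[H]}=\mathcal{B}_i\cap\gauss{H}{k}{q}$. The local claim is the residual-design part of \cite[Cor.~19]{trivial}; globally, the identity $\bigcup_i\mathcal{B}_i^{[H]}=\bigl(\bigcup_i\mathcal{B}_i\bigr)\cap\gauss{H}{k}{q}=\gauss{H}{k}{q}$ (using $\gauss{H}{k}{q}\subseteq\gauss{V}{k}{q}$) shows that the $\mathcal{B}_i^{[H]}$ partition $\gauss{H}{k}{q}$.

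The main obstacle is essentially absent: the four blockwise constructions and their design properties are either recalled in the preceding pages or quoted directly from \cite[Cor.~19]{trivial}, and in every case the partition property follows from a one-line set-theoretic argument. The only care needed is to match the blockwise constructions with the exact formulations of derived and residual designs used in \cite{trivial} so that the cited corollary applies verbatim to each constituent $(V,\mathcal{B}_i)$; of the four parts, the residual in~(iv) is the one whose local design property is least transparent, but it can also be obtained by composing dualization, derivation, and dualization, which are already handled by (i) and (iii).
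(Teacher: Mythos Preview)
Your plan is correct and is the standard argument. Note, however, that the paper does not supply its own proof of this lemma at all: it is quoted verbatim as \cite[Cor.~19]{trivial} and immediately followed by the next section, so there is nothing in the paper to compare your argument against. Your write-up is therefore strictly more detailed than what the paper offers.

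One small point of hygiene: in parts (iii) and (iv) you invoke \cite[Cor.~19]{trivial} for the \emph{local} single-design claims, but Cor.~19 in \cite{trivial} is exactly the large-set statement you are proving. What you actually need there are the underlying derived- and residual-design constructions for a single subspace design, which are established earlier in \cite{trivial}. Your closing remark about matching formulations shows you are aware of this, but in a polished version the citations for the local claims should point to those earlier single-design results rather than to Cor.~19 itself, to avoid the appearance of circularity. Your alternative route for (iv) via dualize--derive--dualize is valid and sidesteps this entirely.
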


\section{Decompositions of $\gauss{V}{k}{q}$ into joins}
\label{sect:decompositions}

\subsection{Joins of subspaces}
\label{subsect:join}

\begin{defn}
\label{defn:cover_avoid}
Let $U_1 \leq U_2 \leq V$ be a chain of subspaces of $V$.
We say that a subspace $K \leq V$
\begin{enumerate}[(i)]
	\item \emph{covers} the factor space $F = U_2/U_1$ if $U_1 + K = U_2 + K$,
	\item \emph{avoids} the factor space $F = U_2/U_1$ if $U_1 \cap K = U_2 \cap K$.
\end{enumerate}
\end{defn}

\begin{rem}
\item
\begin{enumerate}[(i)]
	\label{rem:cover_avoid}
	\item\label{rem:cover_avoid:history} Definition~\ref{defn:cover_avoid} is inspired by the corresponding notions in group theory, which had been introduced in \cite{Gaschuetz-1962} in the context of chief factors of finite solvable groups.
	\item\label{rem:cover_avoid:generalization} Principally, Definition~\ref{defn:cover_avoid} can be applied to any lattice.
	In the case $q=1$, i.e. the subset lattice $\mathcal{L}(X)$ of a set $X$, it is easy to see that $K\in\mathcal{L}(X)$ covers the flag $U_1\subseteq U_2$ if and only if $U_2\setminus U_1 \subseteq K$, and that $K$ avoids the flag $U_1\subseteq U_2$ if and only if $(U_2\setminus U_1) \cap K = \emptyset$.
	Back in the case $q\geq 2$, the following Lemma gives a similar description for factors of the form $V_j/V_i$ based on canonical matrices.
\end{enumerate}
\end{rem}

\begin{lem}
\label{lem:cover_avoid_matrices}
Let $U \leq \GF(q)^v$ and $i,j\in\{0,\ldots,v\}$ with $i\leq j$.
\begin{enumerate}[(a)]
	\item $U$ covers $V_j/V_i \iff I_j\setminus I_i \subseteq \pi(\cm(U))$.
	\item $U$ avoids $V_j/V_i \iff (I_j\setminus I_i) \cap \pi(\cm(U)) = \emptyset$.
\end{enumerate}
\end{lem}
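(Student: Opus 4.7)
The plan is to translate both statements into dimension equalities, and then read off the relevant dimensions via Lemma~\ref{lem:Vi_property}. Since $V_i \leq V_j$, the inclusions $U + V_i \leq U + V_j$ and $U \cap V_i \leq U \cap V_j$ always hold, so the covering condition $U + V_i = U + V_j$ is equivalent to $\dim(U+V_i) = \dim(U+V_j)$, and analogously avoiding is equivalent to $\dim(U \cap V_i) = \dim(U \cap V_j)$.

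Next, I would apply Lemma~\ref{lem:Vi_property} with parameters $i$ and then $j$. Writing $\cm(U)=\begin{pmatrix}A & B \\ \mathbf{0} & C\end{pmatrix}$ as in that lemma, the reduced row echelon form constraints force the pivot columns of $A$ to be precisely the elements of $\pi(\cm(U))$ lying in $\{1,\ldots,v-i\}$, i.e.\ outside $I_i$, while the pivot columns of $C$ (shifted by $v-i$) are precisely the elements of $\pi(\cm(U))$ inside $I_i$. Counting rows in the explicit descriptions of $\cm(U\cap V_i)$ and $\cm(U+V_i)$ provided by the same lemma therefore yields
\[
	\dim(U\cap V_i) = \#(\pi(\cm(U)) \cap I_i), \qquad \dim(U + V_i) = \#(\pi(\cm(U)) \setminus I_i) + i,
\]
together with the analogous identities for $j$.

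Subtracting, one finds $\dim(U\cap V_j) - \dim(U\cap V_i) = \#(\pi(\cm(U)) \cap (I_j\setminus I_i))$, which vanishes iff $\pi(\cm(U))$ is disjoint from $I_j\setminus I_i$; this proves (b). Similarly, $\dim(U+V_j) - \dim(U+V_i) = (j-i) - \#(\pi(\cm(U)) \cap (I_j\setminus I_i))$, which vanishes iff $\pi(\cm(U))$ contains all $j-i$ elements of $I_j\setminus I_i$; this proves (a). I do not anticipate any real obstacle: once Lemma~\ref{lem:Vi_property} is in hand, the argument reduces to bookkeeping of pivot positions relative to the block split at column $v-i$ (resp.\ $v-j$).
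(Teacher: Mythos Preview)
Your argument is correct and is precisely a fleshed-out version of the paper's one-line proof, which simply reads ``Use Lemma~\ref{lem:Vi_property}.'' You have merely made the intended dimension bookkeeping explicit.
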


\begin{proof}
Use Lemma~\ref{lem:Vi_property}.
\end{proof}

\begin{defn}
\label{defn:join}
\item
\begin{enumerate}[(a)]
\item\label{defn:join:ordinary_join} Let $U\leq V$ and $K_1,K_2 \leq V$ with $K_1\leq U\leq K_2$.
The \emph{(ordinary) join} of $K_1$ and $K_2/U$ with respect to $U$ is defined as
	\[
		K_1 \ast_U K_2/U = \{K \in\mathcal{L}(V) \mid U\cap K = K_1, U + K = K_2\}\text{.}
	\]
\item\label{defn:join:extended_join} Let $U_1 \leq U_2 \leq V$ and $K_1, K_2\leq V$ with $K_1\leq U_1$ and $U_2\leq K_2$.
We define the \emph{covering join} of $K_1$ and $K_2/U_2$ with respect to the factor space $F = U_2/U_1$ as
\[
	K_1 \ast_F K_2/U_2
	= \{K \in\mathcal{L}(V) \mid U_1 \cap K = K_1, U_2 + K = K_2, K\text{ covers } F\}
\]
and the \emph{avoiding join} of $K_1$ and $K_2$ with respect to the factor space $F = U_2/U_1$ as
\[
	K_1 \ast_{\bar{F}} K_2/U_2
	= \{K \in\mathcal{L}(V) \mid U_1 \cap K = K_1, U_2 + K = K_2, K\text{ avoids } F\}\text{.}
\]
\end{enumerate}
\end{defn}

The above join operators generalize the dot symbol for ordinary designs used in \cite{Ajoodani91}.

Figure~\ref{fig:ordinary_join} shows the Hasse diagram for a block $K$ of the ordinary join $K_1 \ast_U K_2/U$.
In Figure~\ref{fig:covering_avoiding_joins}, the Hasse diagram for a block $K$ of the covering join $K_1 \ast_{U_2/U_1} K_2/U_2$ and a block $\overline{K}$ of the avoiding join $K_1 \ast_{\overline{U_2/U_1}} K_2/U_2$ is shown.
The edge labels denote codimensions, where the symbols are defined as in Lemma~\ref{lem:join_basic_properties}.

\begin{figure}
\noindent\centering
\begin{tikzpicture}[scale=.8,auto]
    \node (O) at (0,0) {$\{\mathbf{0}\}$};
    \node (U) at (0,3.5) {$U$};
    \node (V) at (0,6) {$V$};
    \node (K1) at (2,1.5) {$K_1$};
    \node (K2) at (3.5,5.5) {$K_2$};
    \node (K) at (5.5,3.5) {$K$};
    \draw (O) -- node[font=\scriptsize,swap]{$k_1$} (K1) -- (U) -- node[font=\scriptsize,swap]{$\bar{k}_2 = k_2-u$} (K2) -- (V);
    \draw (K1) -- (K) -- (K2);
    \draw[loosely dotted] (O) -- node[font=\scriptsize]{$u$} (U) -- node[font=\scriptsize]{$v-u$} (V);
\end{tikzpicture}
\caption{Ordinary join of $K_1$ and $K_2/U$}
\label{fig:ordinary_join}
\end{figure}

\begin{figure}
\noindent\centering
\begin{tikzpicture}[auto,scale=.8]
    \node (O) at (0,0) {$\{\mathbf{0}\}$};
    \node (U1) at (0,4) {$U_1$};
    \node (U2) at (0,6) {$U_2$};
    \node (V) at (0,8.5) {$V$};
    \node (K1) at (2,2) {$K_1$};
    \node (K2) at (3,7) {$K_2$};
    \coordinate (bot) at (2,4) {};
    \coordinate (top) at (3,5) {};
    \node (Kcov) at (5,5) {$K$};
    \node (Kav) at (5,3) {$\overline{K}$};
    \draw[fill] (bot) circle (2pt);
    \draw[fill] (top) circle (2pt);
    \draw (O) -- node[font=\scriptsize,swap]{$k_1$} (K1) -- (U1) -- node[font=\scriptsize]{$f = u_2-u_1$} (U2) -- node[font=\scriptsize]{$\bar{k}_2$}  (K2) -- (V);
    \draw (K1) -- (Kav) -- (top) -- (U1);
    \draw (K2) -- (Kcov) -- (bot) -- (U2);
    \draw (K1) -- (bot);
    \draw (K2) -- (top);
    \draw (Kav) -- (Kcov);
    \draw[loosely dotted] (O) -- node[font=\scriptsize]{$u_1$} (U1);
    \draw[loosely dotted] (U2) --node[font=\scriptsize]{$v-u_2$}  (V);
\end{tikzpicture}
\caption{Covering and avoiding join of $K_1$ and $K_2/U_2$}
\label{fig:covering_avoiding_joins}
\end{figure}

As a direct consequence of the definition, we get:
\begin{lem}
\label{lem:all_joins_the_same}
Fix the notation as in Definition~\ref{defn:join}.
Then
\begin{enumerate}[(a)]
\item\label{lem:all_joins_the_same:ord} $K_1 \ast_U K_2/U = K_1 \ast_{U/U} K_2/U = K_1\ast_{\overline{U/U}} K_2/U$,
\item\label{lem:all_joins_the_same:cov2ord} $K_1 \ast_{U_2/U_1} K_2/U_2 = K_1\ast_{U_1} K_2/U_1$,
\item\label{lem:all_joins_the_same:av2ord} $K_1 \ast_{\overline{U_2/U_1}} K_2/U_2 = K_1\ast_{U_2} K_2/U_2$.
\end{enumerate}
\end{lem}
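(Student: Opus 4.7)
The plan is to prove all three identities directly from Definition~\ref{defn:join} by unfolding the defining conditions. In each case, the two sides are sets of subspaces characterized by intersection- and sum-conditions, so it suffices to verify that these conditions match, and to confirm that the containment hypotheses required for the \emph{ordinary} join on the right-hand side (namely $K_1\leq U\leq K_2$ for a suitable $U$) are automatic from the hypotheses of the covering/avoiding join on the left-hand side.

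For part~\eqref{lem:all_joins_the_same:ord}, I would take $U_1 = U_2 = U$, so that the factor space $F = U/U$ is the zero space. Then for every $K\leq V$ one has $U_1 + K = U + K = U_2 + K$ and $U_1 \cap K = U \cap K = U_2 \cap K$ tautologically, so $K$ both covers and avoids $F$. Hence the covering and avoiding conditions in Definition~\ref{defn:join}\eqref{defn:join:extended_join} are vacuous, the two remaining conditions coincide with those of the ordinary join in Definition~\ref{defn:join}\eqref{defn:join:ordinary_join}, and the three sets are equal.

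For part~\eqref{lem:all_joins_the_same:cov2ord}, the key observation is that the covering condition $U_1 + K = U_2 + K$ lets one replace $U_2$ by $U_1$ in the sum condition. Concretely, if $K$ lies in $K_1 \ast_{U_2/U_1} K_2/U_2$, then $U_1 + K = U_2 + K = K_2$ and $U_1 \cap K = K_1$, so $K\in K_1 \ast_{U_1} K_2/U_1$. For the converse, assume $U_1 \cap K = K_1$ and $U_1 + K = K_2$. Then $U_2 + K$ lies between $U_1 + K = K_2$ and $U_2 + K_2 = K_2$ (using $U_2 \leq K_2$), giving $U_2 + K = K_2$ and thus also $U_1 + K = U_2 + K$, i.e.\ $K$ covers $U_2/U_1$. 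The containment $K_1 \leq U_1 \leq K_2$ needed for the ordinary join on the right is already part of the standing assumptions. Part~\eqref{lem:all_joins_the_same:av2ord} is handled symmetrically: the avoiding condition $U_1 \cap K = U_2 \cap K$ lets one replace $U_1$ by $U_2$ in the intersection condition, and the reverse inclusion uses $K_1 \leq U_1 \leq U_2$ together with $K_1 \leq K$ to squeeze $U_1 \cap K$ between $K_1$ and $U_2 \cap K = K_1$.

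I do not expect a real obstacle here — the lemma is essentially a bookkeeping statement. The only point that requires a small amount of care is the reverse inclusions in \eqref{lem:all_joins_the_same:cov2ord} and \eqref{lem:all_joins_the_same:av2ord}, where one must check that the sum (respectively intersection) condition formulated with $U_1$ automatically upgrades to the corresponding condition with $U_2$; this is precisely where the flag $U_1 \leq U_2$ and the hypotheses $U_2 \leq K_2$ (resp.\ $K_1 \leq U_1$) from Definition~\ref{defn:join}\eqref{defn:join:extended_join} enter.
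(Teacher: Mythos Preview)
Your proposal is correct and matches the paper's approach: the paper states this lemma \enquote{as a direct consequence of the definition} without further argument, and your proof is exactly the unfolding of Definition~\ref{defn:join} that justifies this. The only nontrivial points are the reverse inclusions in \eqref{lem:all_joins_the_same:cov2ord} and \eqref{lem:all_joins_the_same:av2ord}, which you handle correctly using $U_1 \leq U_2 \leq K_2$ and $K_1 \leq U_1 \leq U_2$, respectively.
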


\begin{rem}
By Lemma~\ref{lem:all_joins_the_same}\ref{lem:all_joins_the_same:cov2ord} and \ref{lem:all_joins_the_same:av2ord}, the covering and the avoiding join are a special case of the ordinary join.
So principally, everything what follows could be expressed in terms of the ordinary join only.
However, our main Theorem~\ref{thm:two_parameter_recursion} is based on the decomposition Theorems~\ref{thm:avoidpartition} and \ref{thm:coverpartition}, whose natural formulation relies on the avoiding join and the covering join, respectively.
A reformulation using the ordinary join would complicate the presentation and obscure the idea behind.
For that reason, we will develop the theory for all three kinds of the join.
Still, Lemma~\ref{lem:all_joins_the_same} allows us to shorten some proofs by reducing statements for the covering and the avoiding to the version for the ordinary join.
\end{rem}

\begin{lem}
\label{lem:join_basic_properties}
Fix the notation as in Definition~\ref{defn:join} and let $k_1 = \dim(K_1)$, $k_2 = \dim(K_2)$.
\begin{enumerate}[(a)]
	\item\label{lem:join_basic_properties:ordinary} For $u = \dim(U)$ and $\bar{k}_2 = \dim(K_2 / U) = k_2 - u$, we have
		\begin{align*}
	K_1\ast_U K_2/U & \subseteq \gauss{V}{k_1 + k_2 - u}{q} = \gauss{V}{k_1 + \bar{k}_2}{q}\text{,} \\
	\#(K_1\ast_U K_2/U) & = q^{(u - k_1) (k_2 - u)} = q^{(u - k_1) \bar{k}_2}\text{.}
		\end{align*}
	\item\label{lem:join_basic_properties:cover_avoid} For $u_1 = \dim(U_1)$, $u_2 = \dim(U_2)$, $f = \dim(F) = u_2 - u_1$ and $\bar{k}_2 = \dim(K_2 / U_2) = k_2 - u_2$, we have
		\begin{align*}
		K_1\ast_F K_2/U_2 & \subseteq \gauss{V}{k_1 + k_2 - u_1}{q} = \gauss{V}{k_1 + \bar{k}_2 + f}{q}\text{,} \\
		\#(K_1\ast_F K_2/U_2) & = q^{(u_1 - k_1)(k_2 - u_1)} = q^{(u_1 - k_1)(\bar{k}_2 + f)}
		\end{align*}
		and
		\begin{align*}
			K_1\ast_{\bar{F}} K_2/U_2 & \subseteq \gauss{V}{k_1 + k_2 - u_2}{q} = \gauss{V}{k_1 + \bar{k}_2}{q}\text{,} \\
			\#(K_1\ast_{\bar{F}} K_2/U_2) & = q^{(u_2 - k_1)(k_2 - u_2)} = q^{(u_2 - k_1)\bar{k}_2}\text{.}
		\end{align*}
\end{enumerate}
\end{lem}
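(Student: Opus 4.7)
The plan is to reduce part (b) entirely to part (a) using Lemma~\ref{lem:all_joins_the_same}\ref{lem:all_joins_the_same:cov2ord} and \ref{lem:all_joins_the_same:av2ord}, so the real work is proving part (a). For the covering join this gives $K_1\ast_{U_2/U_1}K_2/U_2 = K_1\ast_{U_1}K_2/U_1$, an ordinary join with intermediate subspace $U_1$ of dimension $u_1$, contained in $K_2$ because $U_1\leq U_2\leq K_2$. For the avoiding join we get $K_1\ast_{\overline{U_2/U_1}}K_2/U_2 = K_1\ast_{U_2}K_2/U_2$, an ordinary join with intermediate subspace $U_2$ of dimension $u_2$, containing $K_1$ since $K_1\leq U_1\leq U_2$. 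Once part (a) is established, substituting $u=u_1$, $k_2-u_1 = \bar{k}_2+f$ for the covering case and $u=u_2$, $k_2-u_2 = \bar{k}_2$ for the avoiding case yields exactly the asserted formulas.

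For part (a), let $K\in K_1\ast_U K_2/U$, so $K\cap U=K_1$ and $K+U=K_2$. The dimension statement follows immediately from the Graßmann identity $\dim(K)+\dim(U)=\dim(K+U)+\dim(K\cap U)$, which gives $\dim(K)=k_1+k_2-u$. For the cardinality, I would pass to the quotient $K_2/K_1$. Because $K_1\leq K\cap U\leq K$ and $K\leq K+U=K_2$, each block $K$ contains $K_1$ and sits inside $K_2$, so $\bar{K}:=K/K_1$ is a well-defined subspace of the $(k_2-k_1)$-dimensional space $K_2/K_1$. The two conditions $K\cap U=K_1$ and $K+U=K_2$ translate under the quotient map to $\bar{K}\cap(U/K_1)=\{\mathbf{0}\}$ and $\bar{K}+(U/K_1)=K_2/K_1$, i.e.\ $\bar{K}$ is a vector space complement of $U/K_1$ in $K_2/K_1$. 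Conversely, any such complement $\bar{K}$ pulls back to a unique $K\leq K_2$ containing $K_1$ with the required intersection and sum properties, establishing a bijection between $K_1\ast_U K_2/U$ and the set of complements of $U/K_1$ in $K_2/K_1$.

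What remains is the standard count: the number of complements of a $w$-dimensional subspace $W$ in an $n$-dimensional space is $q^{w(n-w)}$. I would prove this by observing that complements are in bijection with splittings of the exact sequence $0\to W\to K_2/K_1\to (K_2/K_1)/W\to 0$, and that the set of such splittings is an affine space over $\operatorname{Hom}((K_2/K_1)/W,W)$, hence has cardinality $q^{w(n-w)}$. Applying this with $W=U/K_1$, $w=u-k_1$, $n=k_2-k_1$ yields $q^{(u-k_1)(k_2-u)}$, as claimed. No step here is a serious obstacle; the only care needed is the bookkeeping when translating the covering/avoiding dimension parameters into the ordinary-join framework.
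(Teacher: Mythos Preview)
Your proof is correct and follows essentially the same route as the paper: reduce part (b) to part (a) via Lemma~\ref{lem:all_joins_the_same}, get the dimension from the Graßmann identity, and count by identifying $K_1\ast_U K_2/U$ with the set of complements of $U/K_1$ in $K_2/K_1$. The only difference is that you spell out the complement count via splittings of the short exact sequence, while the paper simply invokes the standard formula $q^{w(n-w)}$.
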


\begin{proof}
By Lemma~\ref{lem:all_joins_the_same} it is enough to show the claim for the ordinary join.
For any subspace $K\in K_1\ast_U K_2/U$, the dimension formula yields $\dim(K) = \dim(U\cap K) + \dim(U + K) - \dim(U) = k_1 + k_2 - u$.
The join $K_1 \ast_U K_2/U$ consists exactly of those subspaces $K \leq V$ with $K_1\leq K$ such that $K/K_1$ is a complement of $U / K_1$ in $K_2 / K_1$, so its size is $q^{(u - k_1)((k_2 - k_1) - (u - k_1))} = q^{(u - k_1)(k_2 - u)}$.
\end{proof}

The concepts of cover and avoid are dual to each other:
\begin{lem}
	\label{lem:cover_avoid_duality}
	Let $U_1 \leq U_2 \leq V$.
	\begin{enumerate}[(a)]
		\item\label{lem:cover_avoid_duality:cover_avoid}
		Let $K \leq V$.
		\begin{enumerate}[(i)]
		    \item $K$ covers $U_2/U_1$ if and only if $K^\perp$ avoids $U_1^\perp/U_2^\perp$.
		    \item $K$ avoids $U_2/U_1$ if and only if $K^\perp$ covers $U_1^\perp/U_2^\perp$.
                \end{enumerate}
		\item\label{lem:cover_avoid_duality:join}
		Let $K_1 \leq U_1$ and $K_2 \leq V$ with $U_2 \leq K_2$.
		\begin{enumerate}[(i)]
		    \item $(K_1\ast_{U_2/U_1} K_2/U_2)^\perp = K_2^\perp\ast_{\overline{U_1^\perp/U_2^\perp}} K_1^\perp/U_1^\perp$.
		    \item $(K_1\ast_{\overline{U_2/U_1}} K_2/U_2)^\perp = K_2^\perp\ast_{U_1^\perp/U_2^\perp} K_1^\perp/U_1^\perp$.
                \end{enumerate}
	\end{enumerate}
\end{lem}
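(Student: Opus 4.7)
The plan is to reduce both parts to routine dualizations, using three standard facts: $\perp$ is order-reversing with $A^{\perp\perp}=A$, and it swaps sum and intersection, i.e.\ $(A+B)^\perp = A^\perp\cap B^\perp$ and $(A\cap B)^\perp = A^\perp + B^\perp$. All the lemma is really saying is that the defining equations of cover, avoid, and the three joins are compatible with $\perp$.

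For part~\ref{lem:cover_avoid_duality:cover_avoid}, I would simply dualize the defining identity. If $K$ covers $U_2/U_1$, then $U_1+K=U_2+K$, and applying $\perp$ gives $U_1^\perp\cap K^\perp = U_2^\perp\cap K^\perp$; since $U_2^\perp\leq U_1^\perp$, this is exactly the statement that $K^\perp$ avoids $U_1^\perp/U_2^\perp$, proving (i). The converse follows from $K^{\perp\perp}=K$, and (ii) is obtained either by the same argument applied to the equation $U_1\cap K = U_2\cap K$, or by substituting $K\mapsto K^\perp$ into (i).

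For part~\ref{lem:cover_avoid_duality:join}, I would spell out the three conditions defining membership in $K_1\ast_{U_2/U_1}K_2/U_2$, namely
\[
    U_1\cap K = K_1,\qquad U_2+K = K_2,\qquad K\text{ covers }U_2/U_1,
\]
and dualize each one. The first becomes $U_1^\perp + K^\perp = K_1^\perp$, the second becomes $U_2^\perp\cap K^\perp = K_2^\perp$, and the third becomes $K^\perp$ avoids $U_1^\perp/U_2^\perp$ by part~\ref{lem:cover_avoid_duality:cover_avoid}(i). These are precisely the three conditions for $K^\perp$ to lie in $K_2^\perp\ast_{\overline{U_1^\perp/U_2^\perp}}K_1^\perp/U_1^\perp$; taking images under $\perp$ then establishes (i). Part (ii) is entirely parallel, invoking \ref{lem:cover_avoid_duality:cover_avoid}(ii) instead.

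The only thing to double-check (and I would mention it briefly to justify that the right-hand sides are well-defined in the sense of Definition~\ref{defn:join}\ref{defn:join:extended_join}) is that the chain and containment hypotheses are preserved by $\perp$: $K_1\leq U_1$ yields $U_1^\perp \leq K_1^\perp$ and $U_2\leq K_2$ yields $K_2^\perp\leq U_2^\perp$, while $U_1\leq U_2$ yields $U_2^\perp\leq U_1^\perp$. So there is no genuine obstacle; the statement is essentially a bookkeeping exercise once one commits to dualizing condition by condition.
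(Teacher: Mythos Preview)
Your proposal is correct and matches the paper's approach exactly: the paper simply says the lemma follows \enquote{from Definition~\ref{defn:join}, making use of $(A + B)^\perp = A^\perp \cap B^\perp$ and $(A \cap B)^\perp = A^\perp + B^\perp$}, which is precisely the condition-by-condition dualization you carry out in detail.
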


\begin{proof}
	From Definition~\ref{defn:join}, making use of $(A + B)^\perp = A^\perp \cap B^\perp$ and $(A \cap B)^\perp = A^\perp + B^\perp$ where $A,B\in\mathcal{L}(V)$.
\end{proof}

\subsection{Paths in the grid graph}
The directed \emph{grid graph} is defined as the vertex set $\N\times \N$ together with the set of directed edges
\[
    \{((x,y),(x+1,y)) \mid x,y\in\N\}\cup\{((x,y),(x,y+1)) \mid x,y\in\N\}\text{.}
\]
It is well known that the paths from $(0,0)$ to $(v-k,k)$ correspond bijectively to the $k$-subsets of $\{1,\ldots,v\}$.
For $K\in\binom{\{1,\ldots,v\}}{k}$, the corresponding path is constructed as follows: If $i\in K$ ($i\in\{1,\ldots,v\}$), then the $i$th step in the path is vertical direction, otherwise in horizontal direction.

For a $q$-analog of this property, we define the directed \emph{$q$-grid graph} in the same way, but with the difference that the horizontal edges $((x,y), (x+1,y))$ are assigned the multiplicity $q^y$ and labelled with the elements of $\GF(q)^y$.
Now a path starting in $(0,0)$ can be read as a column-wise description of a matrix in reduced row echelon form:
A vertical step corresponds to a pivot column, and a horizontal step corresponds to a non-pivot column having the entries given by the assigned label.
In this way, we get a one-to-one correspondence between the paths from $(0,0)$ to $(v-k,k)$ in the $q$-grid graph and the $(k\times v)$-matrices in reduced row echelon form.
So we have:

\begin{thm}\label{thm:grid_paths}
The above correspondence provides a bijection between the paths from $(0,0)$ to $(v-k,k)$ in the $q$-grid graph and the Graßmannian $\gauss{\GF(q)^v}{k}{q}$.
In particular, the number of paths from $(0,0)$ to $(v-k,k)$ in the $q$-grid graph is given by $\gauss{v}{k}{q}$.
\end{thm}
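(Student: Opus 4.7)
The plan is to formalize the bijection already sketched in the paragraph preceding the theorem, and then invoke the fact from Section~\ref{sect:prelim} that every $k$-subspace of $\GF(q)^v$ has a unique canonical matrix in reduced row echelon form.

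First I would make the forward map precise. Given a (weighted) path $P$ from $(0,0)$ to $(v-k,k)$ in the $q$-grid graph, read off a $(k\times v)$-matrix $A(P)$ column by column: the $i$-th step is either vertical, in which case column $i$ of $A(P)$ is the next unused standard basis vector $e_{j}$ of $\GF(q)^k$, or horizontal at height $y$ with label $\ell\in\GF(q)^y$, in which case column $i$ is $(\ell_1,\ldots,\ell_y,0,\ldots,0)^\top\in\GF(q)^k$. Because the path ends at $(v-k,k)$ it contains exactly $k$ vertical and exactly $v-k$ horizontal steps, and by construction $A(P)$ satisfies the defining conditions of reduced row echelon form: its pivot columns are the standard basis vectors $e_1,\ldots,e_k$ in increasing order of column index, and each non-pivot column has zero entries in all rows whose pivot lies to the right.

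Next I would write down the inverse. Given a $(k\times v)$-matrix $A$ in reduced row echelon form with pivot set $\pi(A)=\{\pi_1<\ldots<\pi_k\}$, produce a path $P(A)$ by taking the $i$-th step to be vertical if $i\in\pi(A)$, and otherwise horizontal with label equal to the truncation of column $i$ of $A$ to its first $y$ entries, where $y$ is the number of pivots to the left of column $i$ (so $y$ equals the current height in the path, matching the horizontal-edge multiplicity $q^y$). The map $P(A)$ ends at $(v-k,k)$, and a direct check shows $A(P(A))=A$ and $P(A(P))=P$, so these two constructions are mutually inverse bijections between the path set and the set of $(k\times v)$-matrices in reduced row echelon form.

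Combining with the fact recalled in Section~\ref{sect:prelim} that the map $U\mapsto\cm(U)$ is a bijection from $\gauss{\GF(q)^v}{k}{q}$ onto the set of such reduced row echelon matrices yields the asserted bijection between paths and $k$-subspaces. The count $\gauss{v}{k}{q}$ follows either by composing with the known cardinality of the Graßmannian, or independently by the classical generating-function identity $\gauss{v}{k}{q}=\sum_P q^{\operatorname{area}(P)}$, where the sum runs over unweighted lattice paths from $(0,0)$ to $(v-k,k)$ and $\operatorname{area}(P)$ is the number of unit squares under $P$: indeed, each unweighted path $P$ contributes exactly $\prod_{\text{horizontal step at height }y} q^y=q^{\operatorname{area}(P)}$ weighted paths in the $q$-grid graph. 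There is no real obstacle here; the only minor care needed is bookkeeping the correspondence between the height of a horizontal step and the row-support of the associated non-pivot column, which is exactly what the multiplicity $q^y$ on horizontal edges was designed to encode.
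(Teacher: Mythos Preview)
Your proposal is correct and follows exactly the route the paper intends: the paper does not supply a separate proof for this theorem but simply states it as a consequence of the column-wise reading of paths as reduced row echelon matrices described in the paragraph just before, together with the uniqueness of $\cm(U)$ from Section~\ref{sect:prelim}. Your write-up is a faithful and more detailed fleshing-out of that sketch; the extra area-counting argument you add is exactly the variant the paper attributes to \cite{Azose} in the remark following the theorem.
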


\begin{rem}
\item
\begin{enumerate}[(i)]
\item The shape of the path corresponding to a subspace $U \leq \GF(q)^v$ can be characterized without using its canonical matrix:
By Lemma~\ref{lem:cover_avoid_matrices}, the $i$-th step ($i\in\{1,\ldots,v\}$) is in vertical direction if and only if $U$ covers $V_{v-i+1}/V_{v-i}$, and it is in horizontal direction if and only if $U$ avoids $V_{v-i+1}/V_{v-i}$.
By Remark~\ref{rem:cover_avoid}\ref{rem:cover_avoid:generalization}, this cover-avoid-description carries over to the case $q = 1$, i.e. the representations of subsets by paths in the ordinary grid graph.
\item In Lemma~\ref{lem:cm_dual}, we have seen that for canonical matrices of dual subspaces, it is natural to switch to the reduced right row echelon form.
To get back to the reduced left row echelon form, we may reverse the coordinates of $\GF(q)^v$ afterwards.

So let $\rho : \GF(q)^v\to \GF(q)^v$, $(x_1,\ldots,x_v) \mapsto (x_v,\ldots,x_1)$ be the reversion automorphism.
Then $\rho(V_i^\perp) = V_{v-i}$ for all $i\in\{0,\ldots,v\}$, and if $P \subset \N\times\N$ is the path corresponding to $U \leq V$, by Lemma~\ref{lem:cover_avoid_matrices} the path corresponding to $\rho(U^\perp)$ is given by $\{(k-y,v-k-x) \mid (x,y)\in P\}$.
\item A variant of Theorem~\ref{thm:grid_paths} is found in \cite{Azose}.
However, the bijection between the Graßmannian and the set of paths is less explicit:
Instead of using multi-edges, the considered paths are weighted by the area below the path.
\end{enumerate}
\end{rem}

The subspaces $V_i$ admit a representation of all types of joins in terms of canonical matrices:

\begin{lem}
Let $0 \leq u_1 \leq u_2 \leq v$ be integers, $F = V_{u_2}/V_{u_1}$ and $f = \dim(F) = u_2 - u_1$.
Let $K_1,K_2 \leq \GF(q)^v$ with $K_1\leq V_{u_1}$ and $V_{u_2}\leq K_2$.
Furthermore, let $k_1 = \dim(K_1)$ and $\bar{k}_2 = \dim(K_2/V_{u_2})$.
The canonical matrices of $K_1$ and $K_2$ have the form
\[
	\begin{pmatrix}
		\mathbf{0}_{k_1 \times (v - u_1)} & A_1 \\
	\end{pmatrix}
	\qquad\text{and}\qquad
	\begin{pmatrix}
		A_2 & \mathbf{0}_{\bar{k}_2\times u_2} \\
		\mathbf{0}_{u_2\times (v-u_2)} & E_{u_2}
	\end{pmatrix}
\]
with canonical matrices $A_1\in \GF(q)^{k_1\times u_1}$ and $A_2\in \GF(q)^{\bar{k}_2\times (v - u_2)}$.
\begin{enumerate}[(a)]
	\item In the case $u_1 = u_2 = u$, the ordinary join $K_1 *_{V_u} (K_2/V_u)$ is given by all the subspaces of $\GF(q)^v$ with canonical matrices of the form
	\[
		\begin{pmatrix}
			A_2 & B \\
			\mathbf{0}_{k_1 \times (v - u)} & A_1 \\
		\end{pmatrix}
	\]
	where $B\in\GF(q)^{\bar{k}_2\times u}$ is a matrix having zero entries at all the pivot column positions of $A_1$.
	\item The covering join $K_1 *_F (K_2/V_{u_2})$ is given by all the subspaces of $\GF(q)^v$ with canonical matrices of the form
	\[
		\begin{pmatrix}
			A_2 & \mathbf{0}_{\bar{k}_2 \times f} & B_1 \\
			\mathbf{0}_{f \times (v - u_2)} & E_{f} & B_2 \\
			\mathbf{0}_{k_1\times (v - u_2)} & \mathbf{0}_{k_1\times f} & A_1
		\end{pmatrix}
	\]
	where $B_1\in\GF(q)^{\bar{k}_2 \times u_1}$, $B_2\in\GF(q)^{f\times u_1}$ are matrices having zero entries at all the pivot column positions of $A_1$.
	\item The avoiding join $K_1 *_{\bar{F}} (K_2/V_{u_2})$ is given by all the subspaces of $\GF(q)^v$ with canonical matrices of the form
	\[
		\begin{pmatrix}
			A_2 & B_1 & B_2 \\
			\mathbf{0}_{k_1 \times (v - u_2)} & \mathbf{0}_{k_1\times f} & A_1 \\
		\end{pmatrix}
	\]
	where $B_1\in\GF(q)^{\bar{k}_2 \times f}$ is an arbitrary matrix and $B_2\in\GF(q)^{\bar{k}_2\times u_1}$ is a matrix having zero entries at all the pivot column positions of $A_1$.
\end{enumerate}
\end{lem}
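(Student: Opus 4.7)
My approach is to take each of the three candidate matrix shapes, verify that it really is in reduced row echelon form, read off the intersections, sums, and cover/avoid behavior via Lemmas~\ref{lem:Vi_property} and~\ref{lem:cover_avoid_matrices}, and then finish by a parameter count matched against Lemma~\ref{lem:join_basic_properties}\ref{lem:join_basic_properties:ordinary}--\ref{lem:join_basic_properties:cover_avoid}. Since a subspace is determined by its canonical matrix, once the matrices in question are shown to be canonical, lie in the prescribed join, and number exactly as many as the join does, we are done.

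The plan is, first, to verify the reduced row echelon condition. In each case the pivot columns are precisely the pivot columns of $A_2$ (in the first $v-u_2$ positions), the $f$ pivots coming from $E_f$ in the middle block (only for part (b)), and the pivot columns of $A_1$ shifted into the last $u_1$ positions. The only nontrivial point is that the entries of the $B$-blocks sitting above the pivots of $A_1$ must vanish; this is exactly the hypothesis imposed on the entries of $B$ in (a), on $B_1,B_2$ in (b), and on $B_2$ in (c). Nothing further is required because the entries of $B_1$ in (c) lie in columns that are not pivot columns for any row.

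Next, I will compute the relevant intersections and sums for the row space $K$ of each matrix. Splitting the canonical matrix after column $v-u_1$ or $v-u_2$ and invoking Lemma~\ref{lem:Vi_property} gives $\cm(K\cap V_{u_1})=(\mathbf{0}\mid A_1)=\cm(K_1)$ and $\cm(K+V_{u_2})=\bigl(\begin{smallmatrix}A_2&\mathbf{0}\\\mathbf{0}&E_{u_2}\end{smallmatrix}\bigr)=\cm(K_2)$; in the avoiding case the further identity $K\cap V_{u_2}=K\cap V_{u_1}=K_1$ is automatic from Definition~\ref{defn:cover_avoid}. The cover/avoid property is then read off from the pivot set: in (b) the middle block $E_f$ contributes exactly the pivot positions $I_{u_2}\setminus I_{u_1}$, so by Lemma~\ref{lem:cover_avoid_matrices}(a), $K$ covers $V_{u_2}/V_{u_1}$; in (c) the pivot positions lie in $\{1,\dots,v-u_2\}\cup\{v-u_1+1,\dots,v\}$, which is disjoint from $I_{u_2}\setminus I_{u_1}$, so by Lemma~\ref{lem:cover_avoid_matrices}(b), $K$ avoids $V_{u_2}/V_{u_1}$.

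Finally, I count free parameters. In (a), $B$ has $\bar{k}_2 u$ entries with $\bar{k}_2 k_1$ of them forced to zero, giving $q^{\bar{k}_2(u-k_1)}$ matrices. In (b), $B_1$ and $B_2$ together have $(\bar{k}_2+f)u_1$ entries with $(\bar{k}_2+f)k_1$ forced to zero, giving $q^{(u_1-k_1)(\bar{k}_2+f)}$. In (c), $B_1$ contributes $q^{\bar{k}_2 f}$ and $B_2$ contributes $q^{\bar{k}_2(u_1-k_1)}$, for a total of $q^{\bar{k}_2(u_2-k_1)}$. Each of these matches the cardinality of the corresponding join in Lemma~\ref{lem:join_basic_properties}, closing the proof by injectivity of $\cm$. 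The only subtle point in the whole argument is to keep the two block splits (at column $v-u_1$ and at column $v-u_2$) straight, and to notice that the freedom of $B_1$ in part (c) does not violate the echelon shape because no pivot column of $A_1$ lies in the middle $f$ columns.
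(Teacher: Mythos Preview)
Your proposal is correct and follows essentially the same approach as the paper, whose proof is simply ``Use Lemma~\ref{lem:Vi_property}.'' You have fleshed out that one-line hint in full: the block-splitting via Lemma~\ref{lem:Vi_property} is exactly the intended mechanism, and your additional use of Lemma~\ref{lem:cover_avoid_matrices} for the cover/avoid check together with the parameter count against Lemma~\ref{lem:join_basic_properties} is a clean and legitimate way to certify that the described matrices exhaust the join.
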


\begin{proof}
	Use Lemma~\ref{lem:Vi_property}.
\end{proof}

For all three types of join operators, we will extend the notation to sets of subspaces:
\begin{defn}
\label{def:extended_join}
Let $U_1 \leq U_2$ be subspaces of $V$, $k_1\in\{0,\ldots,\dim(U_1)\}$, $k_2\in\{\dim(U_2),\ldots,v\}$ and $\bar{k}_2 = k_2 - \dim(U_2)$.
Let $\mathcal{B}^{(1)} \subseteq \gauss{U_1}{k_1}{q}$ and $\mathcal{B}^{(2)} \subseteq \gauss{V/U_2}{\bar{k}_2}{q}$.
Furthermore, let $*$ denote the ordinary join $*_U$ (with $U = U_1 = U_2$) or the covering join $*_{U_2/U_1}$ or the avoiding join $*_{\overline{U_2/U_1}}$.
We define
\[
    \mathcal{B}^{(1)} \ast \mathcal{B}^{(2)}
    = \bigcup_{\substack{B^{(1)}\in\mathcal{B}^{(1)} \\ B^{(2)}\in\mathcal{B}^{(2)}}} B^{(1)} \ast B^{(2)}\text{.}
\]
Furthermore, we explicitly set the boundary cases
\[
    \mathcal{B}^{(1)} \ast \emptyset
    = \emptyset \ast \mathcal{B}^{(2)}
    = \emptyset \ast \emptyset
    = \emptyset\text{.}
\]
\end{defn}

\begin{cor}
\label{cor:join_to_path}
Let $u_1,u_2,k_1,\bar{k}_2$ be integers with $0 \leq k_1\leq u_1 \leq u_2 \leq \bar{k}_2 + u_2 \leq v$.
Furthermore, let $F = V_{u_2}/V_{u_1}$ and $f = \dim(F) = u_2 - u_1$.
\begin{enumerate}[(a)]
	\item\label{cor:join_to_path:ordinary} In the case $U_1 = U_2 = U$ and $u_1 = u_2 = u$, the ordinary join
\[
	\gauss{V_u}{k_1}{q} *_{V_u} \gauss{\GF(q)^v/V_u}{\bar{k}_2}{q}
	= \left\{K\in\gauss{\GF(q)^v}{k_1 + \bar{k}_2}{q} \mid \dim(K \cap V_u) = k_1\right\}
\]
is given by all subspaces of $\GF(q)^v$ whose representation in the $q$-grid graph is a path from $(0,0)$ to $(v-k_1-\bar{k}_2,k_1+\bar{k}_2)$ passing through the vertex $(v-u-\bar{k}_2,\bar{k}_2)$.
	\item\label{cor:join_to_path:cover} The covering join
	\[
	    \gauss{V_{u_1}}{k_1}{q} *_F \gauss{\GF(q)^v/V_{u_2}}{\bar{k}_2}{q}
	\]
	is given by all subspaces of $\GF(q)^v$ whose representation in the $q$-grid graph is a path from $(0,0)$ to $(v-k_1-\bar{k}_2-f,k_1+\bar{k}_2+f)$ passing through the vertical line segment from $(v-u_2-\bar{k}_2,\bar{k}_2)$ to $(v-u_2-\bar{k}_2,\bar{k}_2+f)$ of length $f$.
	\item\label{cor:join_to_path:avoid} The avoiding join
	\[
	    \gauss{V_{u_1}}{k_1}{q} *_{\bar{F}} \gauss{\GF(q)^v/V_{u_2}}{\bar{k}_2}{q}
	\]
	is given by all subspaces of $\GF(q)^v$ whose representation in the $q$-grid graph is a path from $(0,0)$ to $(v-k_1-\bar{k}_2,k_1+\bar{k}_2)$ passing through the horizontal line segment from $(v-u_2-\bar{k}_2,\bar{k}_2)$ to $(v-u_1-\bar{k}_2,\bar{k}_2)$ of length $f$.
\end{enumerate}
\end{cor}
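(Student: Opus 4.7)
The plan is to read off each join directly from the preceding lemma, which gives a complete description of the canonical matrix of an arbitrary element of the ordinary, covering, and avoiding joins. Since Theorem~\ref{thm:grid_paths} converts canonical matrices into paths via their pivot patterns (vertical step at a pivot column, horizontal step at a non-pivot column), it is enough in each case to identify where the pivots are forced to lie, then follow how far up and to the right the path has travelled after processing the relevant blocks of columns.

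For part~\ref{cor:join_to_path:ordinary}, I would first verify the set equality. A subspace $K\leq\GF(q)^v$ of dimension $k_1+\bar{k}_2$ satisfies $\dim(K\cap V_u) = k_1$ iff $\dim(K+V_u) = u+\bar{k}_2$ (dimension formula), which by Definition~\ref{defn:join}\ref{defn:join:ordinary_join} is precisely the condition for $K$ to belong to some $K_1 \ast_{V_u} K_2/V_u$ with $K_1\in \gauss{V_u}{k_1}{q}$ and $K_2/V_u\in\gauss{\GF(q)^v/V_u}{\bar{k}_2}{q}$. Then by the preceding lemma, $\cm(K)$ has exactly $\bar{k}_2$ pivots among its first $v-u$ columns (those of $A_2$) and $k_1$ pivots among its last $u$ columns (those of $A_1$). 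Under the bijection of Theorem~\ref{thm:grid_paths}, the first $v-u$ steps of the corresponding path therefore contain $\bar{k}_2$ vertical and $v-u-\bar{k}_2$ horizontal steps in some order; in particular the path passes through $(v-u-\bar{k}_2,\bar{k}_2)$ after step $v-u$. Conversely, any path through that vertex yields a canonical matrix of the required block shape.

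For parts~\ref{cor:join_to_path:cover} and \ref{cor:join_to_path:avoid}, I would apply exactly the same argument to the explicit matrix shapes of the covering and avoiding joins given in the preceding lemma. In the covering case, the middle block $E_f$ forces the $f$ columns in positions $v-u_2+1,\ldots,v-u_1$ to be pivot columns, so the corresponding $f$ consecutive steps of the path are all vertical, tracing the vertical segment from $(v-u_2-\bar{k}_2,\bar{k}_2)$ to $(v-u_2-\bar{k}_2,\bar{k}_2+f)$. In the avoiding case, the middle block has no pivots (it is the unrestricted rectangle $B_1$ sitting above a zero block), so those $f$ steps are all horizontal and trace the segment from $(v-u_2-\bar{k}_2,\bar{k}_2)$ to $(v-u_1-\bar{k}_2,\bar{k}_2)$. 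In both subcases the converse direction follows because any path of the claimed shape reconstructs a canonical matrix of the appropriate block form.

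The only real work is bookkeeping: carefully translating column index $i\in\{1,\ldots,v\}$ into lattice coordinates in $\N\times\N$ and keeping the starting/ending points of each segment straight. A cleaner alternative, which I would mention as a sanity check, is to bypass the preceding lemma and apply Lemma~\ref{lem:cover_avoid_matrices} together with the cover-avoid characterization of step directions from the remark following Theorem~\ref{thm:grid_paths}: covering $V_j/V_i$ corresponds to vertical steps in positions $I_j\setminus I_i$, while avoiding $V_j/V_i$ corresponds to horizontal steps there. Either route makes the argument essentially a translation, so no genuine obstacle beyond index management is expected.
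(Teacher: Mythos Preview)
Your approach is correct and is exactly the intended one: the paper states this as a corollary of the preceding lemma (the explicit canonical-matrix description of the three joins) and Theorem~\ref{thm:grid_paths}, without writing out a proof. Your translation of the pivot structure in the blocks $A_2$, $E_f$/$B_1$, $A_1$ into vertical/horizontal step counts is the straightforward verification the paper leaves implicit, and your alternative via Lemma~\ref{lem:cover_avoid_matrices} is likewise a valid shortcut.
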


From the definitions, it is straightforward to check
\begin{lem}
	\label{lem:join_monotonicity}
	Fix the notation as in Definition~\ref{def:extended_join} and let $\mathcal{B}_1^{(1)},\mathcal{B}_2^{(1)}$ be subsets of $\gauss{U_1}{k_1}{q}$ and $\mathcal{B}_1^{(2)},\mathcal{B}_2^{(2)}$ subsets of $\gauss{V/U_2}{k_2}{q}$.
	Then
	\[
	    \mathcal{B}_1^{(1)} * \mathcal{B}_1^{(2)} \subseteq \mathcal{B}_2^{(1)} * \mathcal{B}_2^{(2)}\iff\mathcal{B}_1^{(1)} \subseteq \mathcal{B}_2^{(1)}\text{ and }\mathcal{B}_1^{(2)} \subseteq \mathcal{B}_2^{(2)}\text{.}
	\]
\end{lem}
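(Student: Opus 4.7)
The "$\Leftarrow$" direction is immediate from Definition~\ref{def:extended_join}: the join $\mathcal{B}^{(1)}*\mathcal{B}^{(2)}$ is defined as a union indexed over pairs $(B^{(1)},B^{(2)})\in\mathcal{B}^{(1)}\times\mathcal{B}^{(2)}$, so enlarging either index set produces a larger union. (The boundary cases with an empty factor behave correctly under this union interpretation as well.)

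For "$\Rightarrow$", the crucial observation is that every $K\in B^{(1)}*B^{(2)}$ uniquely determines its two factors. Indeed, unwinding Definition~\ref{defn:join} (and using Lemma~\ref{lem:all_joins_the_same} to handle all three flavors of join uniformly), one reads off
\[
    B^{(1)} = U_1 \cap K
    \qquad\text{and}\qquad
    B^{(2)} = (U_2 + K)/U_2\text{,}
\]
with the convention $U_1 = U_2 = U$ in the ordinary case. Consequently, the individual joins $B^{(1)}*B^{(2)}$ are pairwise disjoint as $(B^{(1)},B^{(2)})$ ranges over $\gauss{U_1}{k_1}{q}\times\gauss{V/U_2}{\bar{k}_2}{q}$, which is the only non-formal ingredient needed.

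Assuming $\mathcal{B}_1^{(1)}$ and $\mathcal{B}_1^{(2)}$ are both nonempty (otherwise one side of the equivalence is vacuous), pick any $B^{(1)}\in\mathcal{B}_1^{(1)}$ and $B^{(2)}\in\mathcal{B}_1^{(2)}$. By Lemma~\ref{lem:join_basic_properties} the single join $B^{(1)}*B^{(2)}$ is nonempty, so there exists $K\in B^{(1)}*B^{(2)}\subseteq\mathcal{B}_2^{(1)}*\mathcal{B}_2^{(2)}$. The inclusion places $K$ inside some $\tilde B^{(1)}*\tilde B^{(2)}$ with $\tilde B^{(1)}\in\mathcal{B}_2^{(1)}$ and $\tilde B^{(2)}\in\mathcal{B}_2^{(2)}$. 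The factor-recovery formulas above then force $B^{(1)}=\tilde B^{(1)}\in\mathcal{B}_2^{(1)}$ and $B^{(2)}=\tilde B^{(2)}\in\mathcal{B}_2^{(2)}$, which, since $B^{(1)}$ and $B^{(2)}$ were arbitrary, gives both desired inclusions.

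There is essentially no serious obstacle here: the content is just a careful unpacking of Definitions~\ref{defn:join} and \ref{def:extended_join}. The only point requiring any thought is recognising that a block $K$ of a join remembers its two factors, and that reduces the implication to the trivial observation that nonempty factor sets cannot witness $K$ in two different ways.
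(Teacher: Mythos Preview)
Your argument is correct and supplies exactly the details the paper omits; the paper's entire proof reads \enquote{From the definitions, it is straightforward to check}, and your factor-recovery observation $B^{(1)} = U_1\cap K$, $B^{(2)} = (U_2+K)/U_2$ is precisely the content behind that phrase.

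One small quibble: your dismissal of the degenerate case as \enquote{vacuous} is not quite accurate. If, say, $\mathcal{B}_1^{(1)} = \emptyset$ while $\mathcal{B}_1^{(2)}\not\subseteq\mathcal{B}_2^{(2)}$, then the left-hand inclusion $\emptyset\subseteq\mathcal{B}_2^{(1)}*\mathcal{B}_2^{(2)}$ holds trivially but the right-hand conjunction fails, so the stated biconditional is literally false in that corner case. This is a defect of the lemma's formulation (and of the paper's \enquote{explicit} boundary convention in Definition~\ref{def:extended_join}) rather than of your proof; the lemma is only ever invoked in the paper with nonempty factors, and your argument handles the substantive case completely.
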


\subsection{Decompositions}
By the correspondence in Theorem~\ref{thm:grid_paths}, any partition of the set of paths from $(0,0)$ to $(v-k,k)$ in the $q$-grid graph yields a partition of the Graßmannian $\gauss{V}{k}{q}$.
Counting the sizes of the involved parts, each such partition yields a bijective proof for an identity for Gaussian binomial coefficients.

For our purpose, we are looking for decompositions of $\gauss{V}{k}{q}$ into joins in the sense of Definition~\ref{def:extended_join}.
By Corollary~\ref{cor:join_to_path}, this is the same as partitioning the set of paths from $(0,0)$ to $(v-k,k)$ into parts that are given by all paths through the same vertex or the same horizontal or vertical line segment.

To illustrate this approach, we look at the simplest nontrivial decomposition, which leads to a bijective proof of one of the well-known $q$-Pascal triangle identities.
\begin{lem}
	\label{lem:q_pascal}
	Let $1\leq k\leq v-1$ be integers.
	\begin{enumerate}[(a)]
		\item\label{lem:q_pascal:decomposition} A partition of $\gauss{V}{k}{q}$ is given by
		\[
			\gauss{V_{v-1}}{k-1}{q} *_{V/V_{v-1}} (\mathbf{0} + V)/V
			\quad\cup\quad
			\gauss{V_{v-1}}{k}{q} *_{\overline{V/V_{v-1}}} (\mathbf{0} + V)/V\text{.}
		\]
		\item\label{lem:q_pascal:identity} $\gauss{v}{k}{q} = q^{k-v} \gauss{v-1}{k-1}{q} + \gauss{v-1}{k}{q}$.
	\end{enumerate}
\end{lem}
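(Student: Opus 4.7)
The plan is to partition $\gauss{V}{k}{q}$ according to whether a $k$-subspace $K$ is contained in the hyperplane $V_{v-1}$ or not. First I would unpack the notation in (a): with $U_2 = V$ the quotient $V/V$ is zero-dimensional, so $(\mathbf{0}+V)/V$ denotes the unique element of $\gauss{V/V}{0}{q}$, effectively forcing $K_2 = V$ in Definition~\ref{defn:join}. Writing out the joins with this in mind shows that the covering family equals $\{K\in\gauss{V}{k}{q}\mid V_{v-1}+K = V\}$ (with $K_1 = V_{v-1}\cap K$ determining the block into which $K$ falls), while the avoiding family equals $\{K\in\gauss{V}{k}{q}\mid K\leq V_{v-1}\}$.

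For part (a), disjointness is then immediate from this dichotomy. For exhaustiveness: given any $K\in\gauss{V}{k}{q}$, either $K\leq V_{v-1}$, in which case it lies in the avoiding part, or $K\not\leq V_{v-1}$; in the latter case $V_{v-1}+K$ strictly contains the hyperplane $V_{v-1}$ and hence equals $V$, and the dimension formula gives $\dim(V_{v-1}\cap K) = k + (v-1) - v = k-1$, placing $K$ in the covering part over $K_1 = V_{v-1}\cap K$.

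For part (b), I would invoke Lemma~\ref{lem:join_basic_properties}\ref{lem:join_basic_properties:cover_avoid}. Taking $u_1 = v-1$, $u_2 = v$, $f = 1$, $k_1 = k-1$, $\bar{k}_2 = 0$ for the covering summand, each fibre over an element of $\gauss{V_{v-1}}{k-1}{q}$ has size $q^{(v-1-(k-1))(0+1)} = q^{v-k}$; for the avoiding summand, $u_2 = v$ and $\bar{k}_2 = 0$ give fibres of size $q^0 = 1$. Summing the two contributions yields the $q$-Pascal identity claimed in (b).

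There is no real obstacle here; the only subtle point is verifying that Definition~\ref{defn:join} and the counts in Lemma~\ref{lem:join_basic_properties} behave correctly in the degenerate situation $K_2 = V$, $\bar{k}_2 = 0$. In that case the covering join reduces to choosing a complement of $V_{v-1}/K_1$ inside $V/K_1$, while the avoiding join acts trivially on $\gauss{V_{v-1}}{k}{q}$, so both families fit cleanly into the framework of Section~\ref{subsect:join}.
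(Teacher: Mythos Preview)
Your argument is correct. The paper's own proof reaches the same decomposition via the $q$-grid graph: it observes that the first step of any path from $(0,0)$ to $(v-k,k)$ is either vertical or horizontal and then invokes Corollary~\ref{cor:join_to_path} to identify these two cases with the covering and avoiding joins, after which Lemma~\ref{lem:join_basic_properties}\ref{lem:join_basic_properties:cover_avoid} gives the counts exactly as you do. Your route is the same dichotomy stripped of the path language: instead of ``first step vertical/horizontal'' you argue directly from the hyperplane alternative $K\leq V_{v-1}$ versus $V_{v-1}+K=V$, unpacking Definition~\ref{defn:join} by hand. This is slightly more self-contained (it does not rely on Corollary~\ref{cor:join_to_path}), while the paper's phrasing has the advantage of fitting Lemma~\ref{lem:q_pascal} visibly into the same template as Theorems~\ref{thm:qVandermonde}--\ref{thm:coverpartition}. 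Either way the count you obtain for the covering part is $q^{v-k}\gauss{v-1}{k-1}{q}$, in agreement with the paper's computation.
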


\begin{proof}
	The first step of any path from $(0,0)$ to $(v-k,k)$ in the $q$-grid graph is either vertical or horizontal.
	By Corollary~\ref{cor:join_to_path}\ref{cor:join_to_path:cover} with $k_1 = k-1$, $\bar{k}_2 = 0$, $u_1 = v-1$, $u_2 = v$, the set of paths whose first step is vertical corresponds to the covering join $\gauss{V_{v-1}}{k-1}{q} *_{V/V_{v-1}} \gauss{V/V}{0}{q}$.
	In the same way, by Corollary~\ref{cor:join_to_path}\ref{cor:join_to_path:avoid} with $k_1 = k$, $\bar{k}_2 = 0$, $u_1 = v-1$, $u_2 = v$, the set of paths whose first step is horizontal corresponds to the avoiding join $\gauss{V_{v-1}}{k}{q} *_{\overline{V/V_{v-1}}} \gauss{V/V}{0}{q}$.
	This shows part~\ref{lem:q_pascal:decomposition}.
	From Lemma~\ref{lem:join_monotonicity} and Lemma~\ref{lem:join_basic_properties}\ref{lem:join_basic_properties:cover_avoid}, we get
	\begin{align*}
		\#\left(\gauss{V_{v-1}}{k-1}{q} *_{V/V_{v-1}} \gauss{V/V}{0}{q}\right) & = q^{v-k} \gauss{v-1}{k-1}{q}
		\quad\text{and} \\
		\#\left(\gauss{V_{v-1}}{k}{q} *_{\overline{V/V_{v-1}}} \gauss{V/V}{0}{q}\right) & = \gauss{v-1}{k}{q}
	\end{align*}
	and thus part~\ref{lem:q_pascal:identity}.
\end{proof}

In the following, we investigate decompositions where all involved joins are of the same kind.
While the decomposition into ordinary joins provides a bijective proof for the $q$-Vandermonde identity, the decompositions into avoiding or covering joins yield an apparently less well-known identity and will be used later in Section~\ref{sect:series} for the construction of infinite series of halvings.

\begin{thm}[Decomposition into ordinary joins] \label{thm:qVandermonde}
Let $U \leq V$ with $\dim(U) = u$ and $k\in\{0,\ldots,v\}$.
A partition of $\gauss{V}{k}{q}$ is obtained by
\[
	\gauss{V}{k}{q} = \bigcup_{i\in\Z} \left(\gauss{U}{i}{q} *_U \gauss{V/U}{k-i}{q}\right)
\]
The resulting counting formula is the \emph{$q$-Vandermonde identity}
\[
	\gauss{v}{k}{q} = \sum_{i\in\Z} q^{(u-i)(k-i)} \gauss{u}{i}{q} \cdot \gauss{v-u}{k-i}{q}\text{.}
\]
\end{thm}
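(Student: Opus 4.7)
The plan is to show that each $K \in \gauss{V}{k}{q}$ lands in exactly one of the joins on the right-hand side, where the indexing integer is $i = \dim(K \cap U)$.

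First I would fix $K \in \gauss{V}{k}{q}$ and set $K_1 = K \cap U$, $K_2 = K + U$, $i = \dim(K_1)$. The dimension formula gives $\dim(K_2) = k + u - i$, hence $\dim(K_2/U) = k - i$; thus $K_1 \in \gauss{U}{i}{q}$ and $K_2/U \in \gauss{V/U}{k-i}{q}$. Directly from Definition~\ref{defn:join}\ref{defn:join:ordinary_join}, we have $K \in K_1 \ast_U K_2/U$, so $K$ lies in the summand indexed by $i$. The reverse inclusion is immediate from Lemma~\ref{lem:join_basic_properties}\ref{lem:join_basic_properties:ordinary}, which states that each join is a subset of $\gauss{V}{k_1 + \bar k_2}{q} = \gauss{V}{k}{q}$.

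For disjointness I would note that, by the very definition of the ordinary join, every $K \in K_1 \ast_U K_2/U$ satisfies $K_1 = U \cap K$ and $K_2 = U + K$. Thus the triple $(i, K_1, K_2)$ is recovered uniquely from $K$, so $K$ cannot simultaneously belong to two distinct summands, and within a fixed summand the individual joins for different pairs $(K_1, K_2/U)$ are pairwise disjoint as well.

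For the counting identity I would apply Lemma~\ref{lem:join_basic_properties}\ref{lem:join_basic_properties:ordinary} once more: each individual join $K_1 \ast_U K_2/U$ has $q^{(u-i)(k-i)}$ elements, and since the pairs $(K_1, K_2/U)$ range independently over $\gauss{U}{i}{q} \times \gauss{V/U}{k-i}{q}$, summing the sizes of the parts yields
\[
\gauss{v}{k}{q} = \sum_{i \in \Z} q^{(u-i)(k-i)} \gauss{u}{i}{q} \cdot \gauss{v-u}{k-i}{q},
\]
the sum being effectively finite because the Gaussian binomial coefficients vanish outside the range $\max(0, u+k-v) \leq i \leq \min(u, k)$. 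I do not expect a serious obstacle: the theorem is essentially a coordinate-free repackaging of the standard bijective proof of the $q$-Vandermonde identity, with the dimension formula and the definition of the ordinary join doing all the work.
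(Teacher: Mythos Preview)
Your argument is correct, but it takes a different route from the paper. The paper first reduces to $V = \GF(q)^v$ and $U = V_u$, then invokes the bijection between $k$-subspaces and paths in the $q$-grid graph (Theorem~\ref{thm:grid_paths}): each path from $(0,0)$ to $(v-k,k)$ passes through exactly one vertex on the antidiagonal $\{(v-u-k+i,k-i)\mid i\}$, and Corollary~\ref{cor:join_to_path}\ref{cor:join_to_path:ordinary} identifies the set of paths through a given such vertex with the ordinary join $\gauss{V_u}{i}{q} *_{V_u} \gauss{V/V_u}{k-i}{q}$. Your approach is more elementary and entirely coordinate-free: you read off $i$, $K_1$, and $K_2$ directly from $K$ via $K\cap U$ and $K+U$, so no reduction to a standard flag and no grid-graph machinery is needed. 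The paper's route, on the other hand, is deliberately uniform with the proofs of Theorems~\ref{thm:avoidpartition} and~\ref{thm:coverpartition}, where the path picture (paths crossing a fixed horizontal or vertical segment) is what makes the decomposition visible; that uniformity is the payoff for the extra overhead here.
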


\begin{proof}
	Without restriction, let $V = \GF(q)^v$ and $U = V_u$.
	The $k$-subspaces of $V$ correspond to the paths from $(0,0)$ to $(v-k,k)$ in the $q$-grid graph.
	For each such path there is a unique $i\in\{0,\ldots,k\}$ such that the vertex $(v-u-k+i,k-i)$ is on the path.
	This induces a partition on the set of paths.
	Now the application of Corollary~\ref{cor:join_to_path}\ref{cor:join_to_path:ordinary} with $k_1 = i$, $\bar{k}_2 = k-i$ and the same $u$ yields the claimed partition of the Graßmannian.
\end{proof}

\begin{rem}
	Of course, in Theorem~\ref{thm:qVandermonde} only finitely many sets in the union are non-empty and only finitely many terms in the sum are non-zero.
	More precisely, the non-vanishing expressions are those with $\max(0,k+u-v) \leq i \leq\min (u,k)$.
\end{rem}

\begin{figure}
\noindent\centering
\begin{tikzpicture}[scale=.8]
    \draw[step=1.0,black,thin] (0,0) grid (7,3);
    \node[left] at (0,0) {$(0,0)$};
    \node[right] at (7,3) {$(7,3)$};
    \draw[fill] (4,0) circle (4pt);
    \draw[fill] (3,1) circle (4pt);
    \draw[fill] (2,2) circle (4pt);
    \draw[fill] (1,3) circle (4pt);
\end{tikzpicture}
\caption{Decomposition of $\gauss{\GF(q)^{10}}{3}{q}$ into ordinary joins}
\label{fig:path_qvandermonde}
\end{figure}

\begin{ex}
For $v = 10$, $k = 3$ and $u = 6$ the corresponding partition of the paths is shown in Figure~\ref{fig:path_qvandermonde}.
The resulting identity is
\begin{align*}
\gauss{10}{3}{q} & = q^{18}\gauss{6}{0}{q} \gauss{4}{3}{q}
                   + q^{10} \gauss{6}{1}{q} \gauss{4}{2}{q}
                   + q^4 \gauss{6}{2}{q} \gauss{4}{1}{q} 
                   + q^0 \gauss{6}{3}{q} \gauss{4}{0}{q}\text{.}
\end{align*}
\end{ex}

\begin{thm}[Decomposition into avoiding joins]\label{thm:avoidpartition}
Let
\[
	\{\mathbf 0\} = U_0 < U_1<\ldots < U_v = V
\]
be a maximal chain of subspaces of $V$, $k\in\{0,\ldots,v\}$ and $s \in \{0, \ldots, v-k-1\}$.
A partition of $\gauss{V}{k}{q}$ is obtained by
\[
	\gauss{V}{k}{q}
	= \bigcup_{i=0}^k \gauss{U_{s+i}}{i}{q} *_{\overline{U_{s+i+1}/U_{s+i}}} \gauss{V/U_{s+i+1}}{k-i}{q}\text{.}
\]
It yields the identity
\[
	\gauss{v}{k}{q} = \sum_{i=0}^k q^{(s+1)(k-i)}\gauss{s+i}{i}{q}\cdot\gauss{v-s-i-1}{k-i}{q}\text{.}
\]
\end{thm}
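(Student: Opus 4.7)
The plan is to use the bijection of Theorem~\ref{thm:grid_paths} between $\gauss{V}{k}{q}$ and the paths from $(0,0)$ to $(v-k,k)$ in the $q$-grid graph. Since $\GL(V)$ acts transitively on complete flags, we may assume without loss of generality that $V=\GF(q)^v$ and $U_i = V_i$ for all $i\in\{0,\ldots,v\}$. Then Corollary~\ref{cor:join_to_path}\ref{cor:join_to_path:avoid}, specialised with $k_1 = i$, $\bar{k}_2 = k-i$, $u_1 = s+i$, $u_2 = s+i+1$, $f = 1$, identifies the $i$-th summand on the right-hand side of the decomposition with the set of paths passing through the horizontal unit edge from $(v-s-k-1,\,k-i)$ to $(v-s-k,\,k-i)$.

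The decomposition claim then reduces to a simple observation about the grid graph: the hypothesis $0\le s\le v-k-1$ ensures $0 \le v-s-k-1 < v-s-k \le v-k$, so every path from $(0,0)$ to $(v-k,k)$ must cross from column $v-s-k-1$ to column $v-s-k$ by exactly one horizontal step. The height $h\in\{0,\ldots,k\}$ of that step is uniquely determined by the path, and setting $i = k-h$ places the path in a unique part indexed by $i\in\{0,\ldots,k\}$. This yields both disjointness and coverage, establishing the partition.

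For the counting identity, Lemma~\ref{lem:join_basic_properties}\ref{lem:join_basic_properties:cover_avoid} gives $\#\bigl(B^{(1)}\ast_{\overline{U_{s+i+1}/U_{s+i}}} B^{(2)}/U_{s+i+1}\bigr) = q^{(u_2-k_1)\bar{k}_2} = q^{(s+1)(k-i)}$ for every admissible pair $(B^{(1)}, B^{(2)})$. Because $K$ in an avoiding join $B^{(1)}\ast_{\bar{F}} B^{(2)}/U_{s+i+1}$ uniquely recovers $B^{(1)} = K\cap U_{s+i}$ and $B^{(2)} = (K+U_{s+i+1})/U_{s+i+1}$, the union defining the $i$-th part in Definition~\ref{def:extended_join} is disjoint. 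Multiplying by the sizes $\gauss{s+i}{i}{q}$ and $\gauss{v-s-i-1}{k-i}{q}$ of the two Graßmannians at the ends and summing over $i$ produces the claimed formula. No conceptual difficulty is expected; the main work is just the careful substitution of indices into the corollary and the cardinality lemma.
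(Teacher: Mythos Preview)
Your proof is correct and follows essentially the same route as the paper: reduce to the standard flag $U_i=V_i$, invoke Corollary~\ref{cor:join_to_path}\ref{cor:join_to_path:avoid} with $k_1=i$, $\bar{k}_2=k-i$, $u_1=s+i$, $u_2=s+i+1$, and observe that each grid path crosses the column boundary between $v-s-k-1$ and $v-s-k$ at a unique height. Your derivation of the counting identity via Lemma~\ref{lem:join_basic_properties}\ref{lem:join_basic_properties:cover_avoid} spells out a step the paper leaves implicit, but the argument is the same.
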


\begin{proof}
	Without restriction, let $V = \GF(q)^v$ and $U_i = V_i$ as defined in Section~\ref{sect:prelim}.
	For each path from $(0,0)$ to $(v-k,k)$ in the $q$-grid graph, there is a unique $i\in\{0,\ldots,k\}$ such that the path is passing though the horizontal line segment from $(v-k-s-1,k-i)$ to $(v-k-s,k-i)$.
	The application of Corollary~\ref{cor:join_to_path}\ref{cor:join_to_path:avoid} with $k_1 = i$, $\bar{k}_2 = k-i$, $u_1 = s+i$ and $u_2 = s+i+1$ yields the claimed partition of the Graßmannian.
\end{proof}

\begin{figure}
\noindent\centering
\begin{tikzpicture}[scale=.8]
    \draw[step=1.0,black,thin] (0,0) grid (7,3);
    \node[left] at (0,0) {$(0,0)$};
    \node[right] at (7,3) {$(7,3)$};
    \draw[line width = 4pt] (3,0) -- (4,0);
    \draw[line width = 4pt] (3,1) -- (4,1);
    \draw[line width = 4pt] (3,2) -- (4,2);
    \draw[line width = 4pt] (3,3) -- (4,3);
    \draw[fill] (3,0) circle (2pt);
    \draw[fill] (3,1) circle (2pt);
    \draw[fill] (3,2) circle (2pt);
    \draw[fill] (3,3) circle (2pt);
    \draw[fill] (4,0) circle (2pt);
    \draw[fill] (4,1) circle (2pt);
    \draw[fill] (4,2) circle (2pt);
    \draw[fill] (4,3) circle (2pt);
\end{tikzpicture}
\caption{Decomposition of $\gauss{\GF(q)^{10}}{3}{q}$ into avoiding joins}
\label{fig:path_pure_avoid}
\end{figure}

\begin{ex}
\label{ex:decomposition_avoid_join}
For $v = 10$, $k = 3$ and $s = 3$ the corresponding partition of the paths is shown in Figure~\ref{fig:path_pure_avoid}.
The resulting identity is
\begin{align*}
\gauss{10}{3}{q} & = q^{12}\gauss{3}{0}{q}\gauss{6}{3}{q} + q^8 \gauss{4}{1}{q}\gauss{5}{2}{q}  + q^4 \gauss{5}{2}{q}\gauss{4}{1}{q} + q^0 \gauss{6}{3}{q}\gauss{3}{0}{q}\text{.}
\end{align*}
\end{ex}

\begin{thm}[Decomposition into covering joins]\label{thm:coverpartition}
Let
\[
	\{\mathbf 0\} = U_0 < U_1<\ldots < U_v = V
\]
be a maximal chain of subspaces of $V$, $k\in\{0,\ldots,v\}$ and $s \in \{0,\ldots,k-1\}$.
A partition of $\gauss{V}{k}{q}$ is obtained by
\[
	\gauss{V}{k}{q}
	= \bigcup_{i=0}^{v-k} \gauss{U_{v-s-i-1}}{k-s-1}{q} *_{U_{v-s-i}/U_{v-s-i-1}} \gauss{V/U_{v-s-i}}{s}{q}\text{.}
\]
It yields the identity
\[
	\gauss{v}{k}{q} = \sum_{i=0}^{v-k} q^{(v-k-i)(s+1)}\gauss{v-s-i-1}{k-s-1}{q}\cdot\gauss{s+i}{s}{q}\text{.}
\]
\end{thm}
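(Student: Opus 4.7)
The plan is to mirror the proof of Theorem~\ref{thm:avoidpartition} almost verbatim, but replacing horizontal crossings of the path with vertical ones, since the covering join corresponds to vertical segments in the $q$-grid graph (Corollary~\ref{cor:join_to_path}\ref{cor:join_to_path:cover}).

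First I would assume without loss of generality that $V = \GF(q)^v$ and $U_i = V_i$, so that Theorem~\ref{thm:grid_paths} gives the bijection between $\gauss{V}{k}{q}$ and paths from $(0,0)$ to $(v-k, k)$ in the $q$-grid graph. Because $s \in \{0,\ldots,k-1\}$, every such path must at some point transition from height $s$ to height $s+1$. Since the $q$-grid graph has only rightward and upward edges, this transition is realized by a unique vertical edge on the path, starting at a vertex of the form $(i, s)$ for a unique $i \in \{0, 1, \ldots, v-k\}$. Partitioning paths by the value of $i$ yields the claimed disjoint union after translating back through Corollary~\ref{cor:join_to_path}.

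For the translation, I would apply Corollary~\ref{cor:join_to_path}\ref{cor:join_to_path:cover} with the parameters $u_1 = v-s-i-1$, $u_2 = v-s-i$ (so $f = 1$), $k_1 = k-s-1$ and $\bar{k}_2 = s$, and check that the vertical segment it identifies runs from $(v-u_2-\bar{k}_2, \bar{k}_2) = (i,s)$ to $(i, s+1)$, which matches the crossing edge described above. The total dimension is $k_1 + \bar{k}_2 + f = (k-s-1) + s + 1 = k$, consistent with the ambient Graßmannian.

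For the counting identity I would apply Lemma~\ref{lem:join_basic_properties}\ref{lem:join_basic_properties:cover_avoid} to each part: the size of a single covering join block is $q^{(u_1 - k_1)(\bar{k}_2+f)} = q^{(v-k-i)(s+1)}$, the number of choices of $K_1$ is $\gauss{v-s-i-1}{k-s-1}{q}$, and the number of choices of $K_2/U_{v-s-i}$ is $\gauss{s+i}{s}{q}$ since $\dim(V/U_{v-s-i}) = s+i$. Summing over $i \in \{0, \ldots, v-k\}$ gives the stated identity. The main obstacle is bookkeeping: correctly matching the parameters $(u_1, u_2, k_1, \bar{k}_2)$ in Corollary~\ref{cor:join_to_path}\ref{cor:join_to_path:cover} with the indexing in the statement, and confirming that the range $i \in \{0, \ldots, v-k\}$ exactly captures the valid $x$-coordinates of the unique crossing edge; once these are lined up, both halves of the theorem follow immediately.
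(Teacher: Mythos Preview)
Your proposal is correct and follows essentially the same approach as the paper's proof: both assume $V=\GF(q)^v$ with $U_i=V_i$, observe that each path from $(0,0)$ to $(v-k,k)$ crosses height $s$ to $s+1$ via a unique vertical edge at some $(i,s)$ with $i\in\{0,\ldots,v-k\}$, and then invoke Corollary~\ref{cor:join_to_path}\ref{cor:join_to_path:cover} with exactly the parameters $k_1=k-s-1$, $\bar{k}_2=s$, $u_1=v-s-i-1$, $u_2=v-s-i$. Your added verification of the counting identity via Lemma~\ref{lem:join_basic_properties}\ref{lem:join_basic_properties:cover_avoid} is a welcome bit of extra detail that the paper leaves implicit.
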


\begin{proof}
For each path from $(0,0)$ to $(v-k,k)$ in the $q$-grid graph, there is a unique $i\in\{0,\ldots,v-k\}$ such that the path is passing though the vertical line segment from $(i,s)$ to $(i,s+1)$.
    The application of Corollary~\ref{cor:join_to_path}\ref{cor:join_to_path:cover} with $k_1 = k-s-1$, $\bar{k}_2 = s$, $u_1 = v-s-i-1$ and $u_2 = v-s-i$ yields the claimed partition of the Graßmannian.
\end{proof}

\begin{rem}
	\item
	\begin{enumerate}[(i)]
	\item Theorem~\ref{thm:coverpartition} is a dualized version of Theorem~\ref{thm:avoidpartition}.
	More precisely, applying Lemma~\ref{lem:cover_avoid_duality}\ref{lem:cover_avoid_duality:join}, Theorem~\ref{thm:coverpartition} arises from taking the duals in Theorem~\ref{thm:avoidpartition} with respect to the standard bilinear form, reversing the order of the coordinates and substituting $v-k$ by $k$.
	Consequently, the resulting counting formula (with $k$ set to $v-k$ in Theorem~\ref{thm:avoidpartition} and using $\gauss{a}{b}{q} = \gauss{a}{a-b}{q}$) is the same for both theorems.

	\item The counting formulas in Theorem~\ref{thm:qVandermonde} and~\ref{thm:avoidpartition} are special cases of the Theorem in~\cite{Bender} (with $a_i = s+i$ and $a_i = u$, respectively).
	The \enquote{proof by geometry} in~\cite{Bender} can be interpreted as a possibly mixed path decomposition into ordinary and avoiding joins.
	\end{enumerate}
\end{rem}

\begin{figure}
\noindent\centering
\begin{tikzpicture}[scale=.8]
    \draw[step=1.0,black,thin] (0,0) grid (7,3);
    \node[left] at (0,0) {$(0,0)$};
    \node[right] at (7,3) {$(7,3)$};
    \draw[line width = 4pt] (0,1) -- (0,2);
    \draw[line width = 4pt] (1,1) -- (1,2);
    \draw[line width = 4pt] (2,1) -- (2,2);
    \draw[line width = 4pt] (3,1) -- (3,2);
    \draw[line width = 4pt] (4,1) -- (4,2);
    \draw[line width = 4pt] (5,1) -- (5,2);
    \draw[line width = 4pt] (6,1) -- (6,2);
    \draw[line width = 4pt] (7,1) -- (7,2);
    \draw[fill] (0,1) circle (2pt);
    \draw[fill] (1,1) circle (2pt);
    \draw[fill] (2,1) circle (2pt);
    \draw[fill] (3,1) circle (2pt);
    \draw[fill] (4,1) circle (2pt);
    \draw[fill] (5,1) circle (2pt);
    \draw[fill] (6,1) circle (2pt);
    \draw[fill] (7,1) circle (2pt);
    \draw[fill] (0,2) circle (2pt);
    \draw[fill] (1,2) circle (2pt);
    \draw[fill] (2,2) circle (2pt);
    \draw[fill] (3,2) circle (2pt);
    \draw[fill] (4,2) circle (2pt);
    \draw[fill] (5,2) circle (2pt);
    \draw[fill] (6,2) circle (2pt);
    \draw[fill] (7,2) circle (2pt);
\end{tikzpicture}
\caption{Decomposition of $\gauss{\GF(q)^{10}}{3}{q}$ into covering joins}
\label{fig:path_pure_cover}
\end{figure}

\begin{ex}
For $v = 10$, $k = 3$ and $s = 1$ the corresponding partition of the paths is shown in Figure~\ref{fig:path_pure_cover}.
The resulting identity is
\begin{align*}
\gauss{10}{3}{q} & = 
q^{14}\gauss{8}{1}{q}\gauss{1}{1}{q}
+ q^{12}\gauss{7}{1}{q}\gauss{2}{1}{q}
+ q^{10}\gauss{6}{1}{q}\gauss{3}{1}{q}
+ q^{8}\gauss{5}{1}{q}\gauss{4}{1}{q} \\
& \phantom{{}={}}
+ q^{6}\gauss{4}{1}{q}\gauss{5}{1}{q}
+ q^{4}\gauss{3}{1}{q}\gauss{6}{1}{q}
+ q^{2}\gauss{2}{1}{q}\gauss{7}{1}{q}
+ q^{0}\gauss{1}{1}{q}\gauss{8}{1}{q}\text{.}
\end{align*}
\end{ex}

\section{$(N,t)$-partitionable sets}
\label{sect:n-t-partitionable}
The content of this section can be seen as a $q$-analog of parts of \cite{Ajoodani94}, where a similar theory is developed for the set case $q = 1$.

The \emph{zeta function} of a poset $(X,\leq)$ is defined as
\[
    \zeta : X\times X \to \Z\text{,}\quad (x,y) \mapsto \begin{cases}1 & \text{if }x\leq y\text{,}\\0 & \text{otherwise.}\end{cases}
\]
For the poset $(\mathcal{L}(V),\leq)$, we extend the zeta function to sets of subspaces by
\[
	\lambda : \mathcal{L}(V) \times \mathcal{P}(\mathcal{L}(V))\to\Z\text{,}\quad U \times\mathcal{B} \mapsto \sum_{B\in\mathcal{B}} \zeta(U,B) = \#\{B\in\mathcal{B} \mid U \leq B\}\text{.}
\]

\begin{defn}
\label{defn:t_equiv}
Let $t\in\{0,\ldots,v\}$ and $\mathcal{B}_1$ and $\mathcal{B}_2$ be two sets of $k$-subspaces of $V$.
$\mathcal{B}_1$ and $\mathcal{B}_2$ are called \emph{$t$-equivalent} if for all $T\in\gauss{V}{t}{q}$
\[
    \lambda(T,\mathcal{B}_1) = \lambda(T,\mathcal{B}_2)\text{.}
\]
\end{defn}

\begin{rem}
	\item
	\begin{enumerate}[(a)]
    		\item Note that in the above definition, the number $\lambda(T,\mathcal{B}_i)$ may differ for different choices of $T$.
		\item The property of being $t$-equivalent does not depend on the exact choice of the ambient space $V$.
		This will follow from Lemma~\ref{lem:teq_ambientspace}\ref{lem:teq_ambientspace:top}.
		\item Two sets of $k$-subspaces $\mathcal{B}_1$ and $\mathcal{B}_2$ are $0$-equivalent if and only if $\#\mathcal{B}_1 = \#\mathcal{B}_2$.
		\item In the literature, pairs $(\mathcal{B}_1,\mathcal{B}_2)$ of $t$-equivalent sets are also called \emph{trades} or \emph{bitrades}, see \cite{Khosrovshahi-TayfehRezaie-2009} for the situation of classical block designs.
		The minimum possible size of trades for subspace designs has been investigated recently in \cite{Krotov-1,Krotov-2}.
	\end{enumerate}
\end{rem}

\begin{lem}\label{tequiv}
Let $0\leq s\leq t \leq k \leq v$ be integers. 
\begin{enumerate}[(a)]
	\item\label{tequiv:a} Let $\mathcal{B} \subseteq \gauss{V}{k}{q}$ and $S\in\gauss{V}{s}{q}$.
	Then
	\[
	    \lambda(S,\mathcal{B}) = \Bigl(\sum_{S\leq \underaccent{\dot}{T}\in\gauss{V}{t}{q}} \lambda(T,\mathcal{B})\Bigr) / \gauss{k-s}{t-s}{q}\text{.}
	\]
	\item\label{tequiv:b}
	If $\mathcal{B}_1, \mathcal{B}_2\subseteq\gauss{V}{k}{q}$ are $t$-equivalent then they are also $s$-equivalent.
	In particular, $\mathcal{B}_1$ and $\mathcal{B}_2$ are $0$-equivalent, so $\#\mathcal{B}_1 = \#\mathcal{B}_2$.
	\end{enumerate}
\end{lem}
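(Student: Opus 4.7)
The plan is to derive part \ref{tequiv:a} by a standard double-counting argument on incidence pairs, and then obtain part \ref{tequiv:b} as an immediate corollary.

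For \ref{tequiv:a}, I would rewrite the right-hand side sum as
\[
	\sum_{S\leq T\in\gauss{V}{t}{q}} \lambda(T,\mathcal{B})
	= \#\{(T,B) \mid T\in\gauss{V}{t}{q},\,B\in\mathcal{B},\,S\leq T\leq B\}
\]
and swap the order of summation to count over $B$ first:
\[
	\sum_{B\in\mathcal{B}}\#\{T\in\gauss{V}{t}{q} \mid S\leq T\leq B\}\text{.}
\]
The key observation is that for $B\in\mathcal{B}$ with $S\not\leq B$ the inner count is zero, while for $B\in\mathcal{B}$ with $S\leq B$ the map $T\mapsto T/S$ is a bijection between the $t$-subspaces $T$ of $V$ satisfying $S\leq T\leq B$ and the $(t-s)$-subspaces of the $(k-s)$-dimensional quotient $B/S$. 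The latter has cardinality $\gauss{k-s}{t-s}{q}$, a positive integer since $0\leq t-s\leq k-s$. Hence the sum collapses to $\lambda(S,\mathcal{B})\cdot\gauss{k-s}{t-s}{q}$, and division by this Gaussian binomial coefficient yields the claim.

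For \ref{tequiv:b}, I would apply part \ref{tequiv:a} separately to $\mathcal{B}_1$ and $\mathcal{B}_2$. The $t$-equivalence hypothesis gives $\lambda(T,\mathcal{B}_1) = \lambda(T,\mathcal{B}_2)$ for every $T\in\gauss{V}{t}{q}$, so the two numerators in the formula of \ref{tequiv:a} agree term by term for each $S\in\gauss{V}{s}{q}$, forcing $\lambda(S,\mathcal{B}_1) = \lambda(S,\mathcal{B}_2)$. The supplementary statement that $\#\mathcal{B}_1 = \#\mathcal{B}_2$ is just the $s=0$ specialization, since $\{\mathbf{0}\}$ is the unique $0$-subspace of $V$ and is contained in every element of $\gauss{V}{k}{q}$, so $\lambda(\{\mathbf{0}\},\mathcal{B}_i) = \#\mathcal{B}_i$.

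I do not expect any real obstacle. This is the natural $q$-analog of the classical chain of identities relating the higher-point intersection numbers of a combinatorial design, with the passage to the quotient subspace $B/S$ in the modular lattice $\mathcal{L}(V)$ playing the role of the deletion of a point set in the Boolean lattice. The only minor care needed is to note that the divisor $\gauss{k-s}{t-s}{q}$ is indeed nonzero under the hypothesis $0\leq s\leq t\leq k$, which the range of indices guarantees.
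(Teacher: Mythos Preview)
Your argument is correct and follows exactly the paper's approach: the paper's proof of part~\ref{tequiv:a} is the one-line instruction to count the set $\{(T,B)\mid T\in\gauss{V}{t}{q},\,B\in\mathcal{B},\,S\leq T\leq B\}$ in two ways, and part~\ref{tequiv:b} is declared a direct consequence. You have simply written out this double count in full detail, including the quotient identification $T\mapsto T/S$ that yields the factor $\gauss{k-s}{t-s}{q}$.
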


\begin{proof}
	For~\ref{tequiv:a}, count the set $\{(T,B) \mid T\in\gauss{V}{t}{q}, B\in\mathcal{B}, S\leq T\leq B\}$ in two ways.
	Part~\ref{tequiv:b} is a direct consequence.
\end{proof}

\begin{lem}
	\label{lem:teq_ambientspace}
	Let $U \leq V$, $k\in\{0,\ldots,v\}$ and $\mathcal{B}_1,\mathcal{B}_2 \subseteq \gauss{V}{k}{q}$.
	\begin{enumerate}[(a)]
		\item\label{lem:teq_ambientspace:top} If $B \leq U$ for all $B\in\mathcal{B}_1\cup\mathcal{B}_2$, then $\mathcal{B}_1$ and $\mathcal{B}_2$ are $t$-equivalent in $V$ if and only if they are $t$-equivalent in $U$.
		\item\label{lem:teq_ambientspace:bot} If $U \leq B$ for all $B\in\mathcal{B}_1\cup \mathcal{B}_2$, then $\mathcal{B}_1$ and $\mathcal{B}_2$ are $t$-equivalent in $V$ if and only if $\{B/U \mid B\in\mathcal{B}_1\}$ and $\{B/U \mid B\in\mathcal{B}_2\}$ are $t$-equivalent in $V/U$.
	\end{enumerate}
\end{lem}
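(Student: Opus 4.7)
The plan is to analyze both parts by computing $\lambda(T, \mathcal{B}_i) = \#\{B \in \mathcal{B}_i : T \leq B\}$ directly and observing how it transforms under the change of ambient space. Nothing in the definition of $\lambda$ refers to the ambient space itself, so the only subtlety lies in restricting the range of test spaces $T$ to an appropriate Graßmannian and, for part~\ref{lem:teq_ambientspace:bot}, using the bijection $B \mapsto B/U$.

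For part~\ref{lem:teq_ambientspace:top}, I would argue as follows: for any $T \in \gauss{V}{t}{q}$ with $T \not\leq U$, every $B \in \mathcal{B}_i$ lies in $U$ and therefore cannot contain $T$, giving $\lambda(T, \mathcal{B}_i) = 0$ on both sides trivially. Hence $t$-equivalence in $V$ reduces to the condition $\lambda(T, \mathcal{B}_1) = \lambda(T, \mathcal{B}_2)$ for $T \in \gauss{U}{t}{q}$, which is precisely $t$-equivalence in $U$.

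For part~\ref{lem:teq_ambientspace:bot}, the central identity to establish first is
\[
\lambda(T, \mathcal{B}_i) = \lambda\bigl((T+U)/U,\,\{B/U \mid B\in\mathcal{B}_i\}\bigr) \qquad \text{for every } T\in\mathcal{L}(V),
\]
using that $U \leq B$ gives $T \leq B \iff T+U \leq B \iff (T+U)/U \leq B/U$, together with injectivity of $B \mapsto B/U$ on subspaces containing $U$. Writing $\bar T = (T+U)/U$ and $\bar{\mathcal{B}}_i = \{B/U \mid B\in\mathcal{B}_i\}$, the forward direction then follows by lifting any $\bar T\in\gauss{V/U}{t}{q}$ to a $t$-dimensional complement $T$ of $U$ in its preimage, so that $\dim T = t$ and $(T+U)/U = \bar T$, and transporting the equality $\lambda(T,\mathcal{B}_1) = \lambda(T,\mathcal{B}_2)$ across the identity.

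For the backward direction, any $T\in\gauss{V}{t}{q}$ yields $t' := \dim\bar T = t-\dim(T\cap U)\leq t$. By Lemma~\ref{tequiv}\ref{tequiv:b}, $t$-equivalence of $\bar{\mathcal{B}}_1$ and $\bar{\mathcal{B}}_2$ in $V/U$ descends to $t'$-equivalence whenever $t'\leq k-\dim U$ (otherwise both sides of the identity vanish trivially, since no block could contain $T$ in the first place), which through the identity yields $\lambda(T,\mathcal{B}_1) = \lambda(T,\mathcal{B}_2)$. The main obstacle is precisely this possible dimension drop from $T$ to $\bar T$ when $T \cap U \neq \{\mathbf{0}\}$: it is what forces the appeal to Lemma~\ref{tequiv}\ref{tequiv:b} rather than getting by with a pure level-$t$ bijective argument.
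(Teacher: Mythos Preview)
Your proof is correct and follows essentially the same approach as the paper: both establish the identity $\lambda(T,\mathcal{B}_i)=\lambda((T+U)/U,\bar{\mathcal{B}}_i)$ for part~\ref{lem:teq_ambientspace:bot}, use a complement lift for one direction, and invoke Lemma~\ref{tequiv}\ref{tequiv:b} to absorb the dimension drop $\dim((T+U)/U)\le t$ in the other. For part~\ref{lem:teq_ambientspace:top} your direct observation that $\lambda(T,\mathcal{B}_i)=0$ whenever $T\not\le U$ is in fact slightly more elementary than the paper's appeal to Lemma~\ref{tequiv}\ref{tequiv:b}, but the overall strategy is the same.
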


\begin{proof}
	For part~\ref{lem:teq_ambientspace:top}, the \enquote{only if}-direction is trivial, and the \enquote{if}-direction follows from Lemma~\ref{tequiv}\ref{tequiv:b}.

	Part~\ref{lem:teq_ambientspace:bot} is done similarly.
	Since $U \leq B$ for all $B\in\mathcal{B}_1\cup \mathcal{B}_2$, we have $T \leq B$ if and only if $(T+U)/U \leq B/U$.
	This immediately gives the \enquote{if}-direction.
	The \enquote{only if}-direction follows from $\dim_{V/U} (T+U)/U \leq t$ for all $T\in\gauss{V}{t}{q}$ and Lemma~\ref{tequiv}\ref{tequiv:b}.
\end{proof}

\begin{defn}
Let $0\leq t\leq k\leq v$ be integers, $\mathcal{B}$ a set of $k$-subspaces of $V$ and $N$ a positive integer.
A partition $\{\mathcal{B}_1,\ldots,\mathcal{B}_N\}$ of $\mathcal{B}$ into $N$ parts is called an \emph{$(N,t)$-partition} if the parts $\mathcal{B}_i$ are pairwise $t$-equivalent.
The set $\mathcal{B}$ is called \emph{$(N,t)$-partitionable} if there exists an $(N,t)$-partition of $\mathcal{B}$.
Furthermore, we extend the notion $(N,t)$-partitionable to the value $t=-1$ by unconditionally calling any set of $k$-subspaces \emph{$(N,-1)$-partitionable}.
\end{defn}

\begin{rem}
By Lemma~\ref{tequiv}, if $\{\mathcal{B}_1,\ldots,\mathcal{B}_N\}$ is an $(N,t)$-partition of $\mathcal B$ with an integer $t\geq 0$, then in particular it is an $(N,0)$-partition of $\mathcal{B}$, showing that all parts $\mathcal{B}_i$ are of the same size.
\end{rem}

\begin{lem}
\label{lem:disjoint_union_partitionable}
Let $\mathcal{B}^{(1)}$ and $\mathcal{B}^{(2)}$ be two disjoint $(N,t)$-partitionable subsets of $\gauss{V}{k}{q}$ with an integer $t\geq -1$.
Then also $\mathcal{B}^{(1)}\cup \mathcal{B}^{(2)}$ is $(N,t)$-partitionable.
\end{lem}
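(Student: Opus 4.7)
The plan is to prove this by a direct constructive argument: combine the two given $(N,t)$-partitions componentwise and verify that the result is again an $(N,t)$-partition.

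First, I would dispense with the trivial case $t = -1$, for which every partition into $N$ parts qualifies, so any partition of $\mathcal{B}^{(1)}\cup\mathcal{B}^{(2)}$ into $N$ parts works. For $t\geq 0$, let $\{\mathcal{B}^{(1)}_1,\ldots,\mathcal{B}^{(1)}_N\}$ and $\{\mathcal{B}^{(2)}_1,\ldots,\mathcal{B}^{(2)}_N\}$ be $(N,t)$-partitions of $\mathcal{B}^{(1)}$ and $\mathcal{B}^{(2)}$, respectively. Define
\[
    \mathcal{C}_i = \mathcal{B}^{(1)}_i \cup \mathcal{B}^{(2)}_i\qquad\text{for }i\in\{1,\ldots,N\}.
\]
Since $\mathcal{B}^{(1)}$ and $\mathcal{B}^{(2)}$ are disjoint (and thus so are the $\mathcal{B}^{(1)}_i$'s from the $\mathcal{B}^{(2)}_j$'s), and since the families $\{\mathcal{B}^{(1)}_i\}$ and $\{\mathcal{B}^{(2)}_i\}$ are each internally disjoint, the $\mathcal{C}_i$'s are pairwise disjoint and their union is $\mathcal{B}^{(1)}\cup\mathcal{B}^{(2)}$.

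Next I would verify $t$-equivalence of the parts $\mathcal{C}_i$. For any $T\in\gauss{V}{t}{q}$, the disjointness of $\mathcal{B}^{(1)}_i$ and $\mathcal{B}^{(2)}_i$ gives
\[
    \lambda(T,\mathcal{C}_i) = \lambda(T,\mathcal{B}^{(1)}_i) + \lambda(T,\mathcal{B}^{(2)}_i).
\]
By the $t$-equivalence hypothesis, the value $\lambda(T,\mathcal{B}^{(1)}_i)$ is independent of $i$, and likewise for $\lambda(T,\mathcal{B}^{(2)}_i)$. Hence $\lambda(T,\mathcal{C}_i)$ is independent of $i$, so the $\mathcal{C}_i$ are pairwise $t$-equivalent.

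There is no real obstacle here; the statement is essentially a bookkeeping exercise, and the only subtlety is that we need the disjointness of $\mathcal{B}^{(1)}$ and $\mathcal{B}^{(2)}$ to make the counting function $\lambda(T,\cdot)$ additive over the union. This additivity is the single property driving the entire argument.
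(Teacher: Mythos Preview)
Your proof is correct and matches the paper's approach exactly: handle the $t=-1$ case trivially, then for $t\geq 0$ take componentwise unions of the two given $(N,t)$-partitions and use additivity of $\lambda(T,\cdot)$ over disjoint sets. If anything, you spell out the verification of $t$-equivalence more explicitly than the paper does.
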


\begin{proof}
	For the border case $t = -1$, there is nothing to show.
	For $t\geq 0$, let $\{\mathcal{B}_1^{(1)},\ldots,\mathcal{B}_N^{(1)}\}$ be an $(N,t)$-partition of $\mathcal{B}^{(1)}$ and  $\{\mathcal{B}_1^{(2)},\ldots,\mathcal{B}_N^{(2)}\}$ be an $(N,t)$-partition of $\mathcal{B}^{(2)}$.
	Then $\{\mathcal{B}_1^{(1)}\cup \mathcal{B}_1^{(2)},\ldots,\mathcal{B}_N^{(1)}\cup \mathcal{B}_N^{(2)}\}$ is an $(N,t)$-partition of $\mathcal{B}^{(1)}\cup \mathcal{B}^{(2)}$.
\end{proof}

\begin{lem}\label{lma:ls_to_partitionable}
	Let $\mathcal{B}_1,\ldots,\mathcal{B}_N \subseteq \gauss{V}{k}{q}$.
	Then $\{\mathcal{B}_1,\ldots,\mathcal{B}_N\}$ is an $(N,t)$-partition of $\gauss{V}{k}{q}$ if and only if $\{(V,\mathcal{B}_1),\ldots,(V,\mathcal{B}_N)\}$ is an $\LS_q[N](t,k,v)$.
\end{lem}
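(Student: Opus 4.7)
The plan is to unwind both sides against a common invariant, namely the function $T\mapsto\lambda(T,\mathcal{B}_i)$ for $T\in\gauss{V}{t}{q}$. The key identity is that for any partition $\{\mathcal{B}_1,\ldots,\mathcal{B}_N\}$ of $\gauss{V}{k}{q}$ and any $T\in\gauss{V}{t}{q}$,
\[
    \sum_{i=1}^{N}\lambda(T,\mathcal{B}_i) = \lambda\bigl(T,\gauss{V}{k}{q}\bigr) = \gauss{v-t}{k-t}{q}\text{,}
\]
which is just the standard count of $k$-subspaces of $V$ containing a fixed $t$-subspace.

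For the forward direction, I assume that $\{\mathcal{B}_1,\ldots,\mathcal{B}_N\}$ is an $(N,t)$-partition of $\gauss{V}{k}{q}$. Pairwise $t$-equivalence means that for every $T\in\gauss{V}{t}{q}$, all the values $\lambda(T,\mathcal{B}_i)$ coincide. Combined with the identity above, this forces $\lambda(T,\mathcal{B}_i) = \gauss{v-t}{k-t}{q}/N$ for all $i$ and all $T$. In particular this common value is independent of $T$, so each $(V,\mathcal{B}_i)$ is a $t$-$(v,k,\lambda)_q$ subspace design with $\lambda = \gauss{v-t}{k-t}{q}/N$, and since the $\mathcal{B}_i$ partition $\gauss{V}{k}{q}$ by hypothesis, the collection is an $\LS_q[N](t,k,v)$ as defined in Definition~\ref{def:ls} (cf.\ Remark~\ref{rem:ls}\ref{rem:ls:lambda}).

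For the converse, I assume that $\{(V,\mathcal{B}_1),\ldots,(V,\mathcal{B}_N)\}$ is an $\LS_q[N](t,k,v)$. By definition, $\{\mathcal{B}_1,\ldots,\mathcal{B}_N\}$ is a partition of $\gauss{V}{k}{q}$ into subspace designs with common parameters $t$-$(v,k,\lambda)_q$. The design property immediately gives $\lambda(T,\mathcal{B}_i) = \lambda$ for every $T\in\gauss{V}{t}{q}$ and every $i$, which is exactly the requirement of pairwise $t$-equivalence in Definition~\ref{defn:t_equiv}. Hence $\{\mathcal{B}_1,\ldots,\mathcal{B}_N\}$ is an $(N,t)$-partition of $\gauss{V}{k}{q}$.

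There is no real obstacle here; the lemma is essentially a restatement of the definition of a large set in the language of $(N,t)$-partitions, and everything reduces to the observation that constancy of $\lambda(T,\mathcal{B}_i)$ in $T$ (the design property) together with the telescoping sum above automatically forces the values to be equal across $i$, and vice versa.
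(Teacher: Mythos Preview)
Your proof is correct and follows essentially the same route as the paper's: the backward direction is immediate from the design property, and the forward direction uses the identity $\sum_i \lambda(T,\mathcal{B}_i)=\gauss{v-t}{k-t}{q}$ together with pairwise $t$-equivalence to force each $\lambda(T,\mathcal{B}_i)$ to equal $\gauss{v-t}{k-t}{q}/N$ independently of $T$.
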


\begin{proof}
The direction \enquote{$\Leftarrow$} is clear.
For \enquote{$\Rightarrow$}, let $T\in\gauss{V}{t}{q}$.
Since all $k$-subsets of $V$ are covered by the $(N,t)$-partition, $\lambda(T,\mathcal B_1) + \ldots + \lambda(T,\mathcal B_N) = \gauss{v-t}{k-t}{q}$.
Furthermore $\lambda(T,\mathcal B_1) = \ldots = \lambda(T,\mathcal B_N)$ by the property of an $(N,t)$-partition.
So the number $\lambda(T,\mathcal B_i) = \gauss{v-t}{k-t}{q} / N$ does not depend on the choice of $T\in\gauss{V}{t}{q}$ for $i\in\{1,\ldots,N\}$, showing that each part $\mathcal{B}_i$ forms a $t$-$(v,k,\gauss{v-t}{k-t}{q}/N)_q$ subspace design.
\end{proof}

In the following, we provide tools to combine $(N,t)$-partitionable sets of subspaces of two independent vector spaces $V_1$ and $V_2$ over $\GF(q)$ to an $(N,t)$-partitionable set of subspaces of a suitable vector space $V$.

\begin{lem}
	\label{lem:lambda_join}
	Let $K_1 \leq U \leq K_2 \leq V$ be a chain of subspaces.
	For each subspace $T \leq V$,
	\begin{multline*}
	    \{ K \in K_1 *_U K_2/U \mid T \leq K \} \\
	    = \begin{cases} (K_1 + T) *_{U + T} K_2/(U+T) & \text{if }U\cap T\leq K_1\text{ and }T\leq K_2\text{,} \\ \emptyset & \text{otherwise.}\end{cases}
	\end{multline*}
	Setting $u = \dim(U)$, $k_1 = \dim(K_1)$, $\bar{k}_2 = \dim(K_2/U)$ and $r = \dim((U+T)/U)$,
	\[
		\lambda(T, K_1 *_U K_2/U)
		= \begin{cases}
		    q^{(u - k_1)(\bar{k}_2 - r)} & \text{if } U\cap T\leq K_1\text{ and }T\leq K_2\text{,} \\
		    0 & \text{otherwise.}
		\end{cases}
	\]
\end{lem}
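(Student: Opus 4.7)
The plan is first to handle the degenerate case, then to prove the displayed set equality by double inclusion using the modular law, and finally to read off the cardinality from Lemma~\ref{lem:join_basic_properties}\ref{lem:join_basic_properties:ordinary} applied to the right-hand side.

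\textbf{Necessity of the two conditions.} If some $K \in K_1 *_U K_2/U$ contains $T$, then $T \leq K \leq U+K = K_2$, and $U\cap T \leq U\cap K = K_1$. So whenever the pair $(U\cap T \leq K_1,\; T \leq K_2)$ fails, the set on the left is empty. Note also that under both conditions we have $K_1 + T \leq U + T \leq K_2$, so the right-hand join makes sense.

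\textbf{Set equality.} Assume $U\cap T \leq K_1$ and $T \leq K_2$. For the inclusion $\subseteq$, take $K \in K_1 *_U K_2/U$ with $T \leq K$. Since $T \leq K$, the modular law in $\mathcal{L}(V)$ yields
\[
    (U+T)\cap K \;=\; T + (U\cap K) \;=\; T + K_1\text{,}
\]
and $(U+T) + K = U + (T + K) = U + K = K_2$, so $K \in (K_1+T) *_{U+T} K_2/(U+T)$. For the reverse inclusion, take $K \in (K_1+T) *_{U+T} K_2/(U+T)$. Then $T \leq K_1 + T = (U+T)\cap K \leq K$. Using $K_1 \leq U$ and the hypothesis $U\cap T \leq K_1$, modularity gives $U \cap (K_1 + T) = K_1 + (U\cap T) = K_1$, so
\[
    U \cap K \;\leq\; (U+T)\cap K \cap U \;=\; (K_1 + T)\cap U \;=\; K_1\text{,}
\]
while $K_1 \leq K_1+T \leq K$ and $K_1 \leq U$ give the reverse containment. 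Finally, $U + K = (U+T)+K = K_2$ because $T \leq K$. Thus $K \in K_1 *_U K_2/U$ and it contains $T$.

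\textbf{Cardinality.} Apply Lemma~\ref{lem:join_basic_properties}\ref{lem:join_basic_properties:ordinary} to $(K_1+T) *_{U+T} K_2/(U+T)$: its size is $q^{(u'-k_1')(k_2 - u')}$ where $u' = \dim(U+T) = u + r$, $k_1' = \dim(K_1+T)$, and $k_2 = u+\bar{k}_2$. The hypothesis $U\cap T \leq K_1$ forces $K_1 \cap T = U \cap T$, so the dimension formula yields
\[
    u' - k_1' \;=\; \bigl(u - \dim(U\cap T)\bigr) - \bigl(k_1 - \dim(K_1 \cap T)\bigr) \;=\; u - k_1\text{,}
\]
and $k_2 - u' = \bar{k}_2 - r$, giving $q^{(u-k_1)(\bar{k}_2 - r)}$ as claimed.

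The only mildly delicate point is the careful bookkeeping of the modular law identifications; once $K_1 \cap T = U \cap T$ is noted, the dimension arithmetic is routine.
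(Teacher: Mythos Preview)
Your proof is correct and follows essentially the same route as the paper: necessity of the two conditions, double inclusion via the modular law, and the cardinality via Lemma~\ref{lem:join_basic_properties}\ref{lem:join_basic_properties:ordinary}. The only cosmetic difference is in the dimension count for $u'-k_1'$: the paper uses $(K_1+T)\cap U = K_1$ together with the dimension formula for the pair $(K_1+T,\,U)$, whereas you use $K_1\cap T = U\cap T$ and apply the dimension formula to $K_1+T$ and $U+T$ separately---both yield $u-k_1$ immediately.
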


\begin{proof}
	The situation with $U\cap T \leq K_1$ and $T\leq K_2$ is illustrated in figure~\ref{fig:lambda_join}.
	\begin{figure}
	\centering
	\begin{tikzpicture}[scale=.7]
	\coordinate [label={[label distance=3pt]180:$\{\mathbf{0}\}$}] (O) at (0,0);
	\coordinate [label={[label distance=3pt]180:$U\cap T$}] (UT) at (0,2);
	\coordinate [label={[label distance=3pt]180:$K_1 = U\cap K$}] (K1) at (0,4);
	\coordinate [label={[label distance=3pt]180:$U$}] (U) at (0,6);
	\coordinate [label={[label distance=3pt]0:$T$}] (T) at (3,2.5);
	\coordinate [label={[label distance=2pt]315:$K_1+T = (U+T)\cap K$}] (pivot) at (3,4.5);
	\coordinate [label={[label distance=3pt]90:$U+T$}] (UpT) at (3,6.5);
	\coordinate [label={[label distance=3pt]0:$K$}] (K) at (6,5);
	\coordinate [label={[label distance=4pt]90:$K_2=U+K$}] (K2) at (6,7);
	\coordinate [label={[label distance=3pt]0:$V$}](V) at (9,7.5);
	\draw (O) -- (UT) -- (K1) -- (U) -- (UpT) -- (K2) -- (V);
	\draw (UT) -- (T) -- (pivot) -- (UpT);
	\draw (K1) -- (pivot) -- (K) -- (K2);
	\draw [fill] (O) circle (2pt);
	\draw [fill] (UT) circle (2pt);
	\draw [fill] (K1) circle (2pt);
	\draw [fill] (U) circle (2pt);
	\draw [fill] (T) circle (2pt);
	\draw [fill] (pivot) circle (2pt);
	\draw [fill] (UpT) circle (2pt);
	\draw [fill] (K) circle (2pt);
	\draw [fill] (K2) circle (2pt);
	\draw [fill] (V) circle (2pt);
	\end{tikzpicture}
	\caption{Hasse diagram for the situation in Lemma~\ref{lem:lambda_join}}
	\label{fig:lambda_join}
	\end{figure}

	For \enquote{$\subseteq$}, assume there is a $K\in K_1 *_U K_2/U$ with $T \leq K$.
	Then $K\cap U = K_1$, $K + U = K_2$.
	So $T + K_2 = T + (K + U) = (T + K) + U = K + U = K_2$, showing $T \leq K_2$.
	Furthermore, $K_1\cap (U \cap T) = (K \cap U) \cap (U \cap T) = U \cap (K \cap T) = U \cap T$, showing that $U\cap T \leq K_1$.
	This already implies the empty set case.
	In the other case, $K + (U + T) = (K + U) + T = K_2 + T = K_2$.
	Furthermore, $T \leq K$ allows the application of the modularity law such that $K \cap (U + T) = (K \cap U) + T = K_1 + T$.
	So $K \in (K_1 + T)*_{U + T} K_2/(U+T)$.

	For \enquote{$\supseteq$}, let $U\cap T\leq K_1$ and $K \in (K_1 + T) *_{U+T} K_2/(U+T)$.
	Then $K \cap (U + T) = K_1 + T$ and $K + (U + T) = K_2$.
	So $T \leq K_1 + T \leq K$ and $K + U = (K + T) + U = K + (U + T) = K_2$.
	In addition, $K\cap U = (K\cap (U + T)) \cap U = (K_1 + T) \cap U = K_1 + (U \cap T) = K_1$, where the modularity law was used with $K_1 \leq U$.
	This shows $K \in K_1 *_U K_2/U$.

	Let $U\cap T\leq K_1$ and $T\leq K_2$.
	By $K_1 \leq U$ and the modularity law, $(K_1 + T)\cap U = K_1 + (T \cap U) = K_1$ and $K_1 + T + U = U + T$.
	Now the dimension formula yields
	\begin{align*}
		\dim(U + T) - \dim(K_1 + T)
		& = \dim((K_1 + T) + U) - \dim(K_1 + T) \\
		& = \dim(U) - \dim((K_1 + T)\cap U) \\
		& = \dim(U) - \dim(K_1) \\
		& = u - k_1\text{.}
	\end{align*}
	Furthermore, we have
	\[
		\dim(K_2) - \dim(U + T)
		= (\bar{k}_2 + u) - (r + u)
		= \bar{k}_2 - r\text{,}
	\]
	By Lemma~\ref{lem:join_basic_properties}\ref{lem:join_basic_properties:ordinary}, we get
	\begin{align*}
		\lambda(T, K_1 *_U K_2/U)
		& = \#((K_1 + T) *_{U + T} K_2/(U+T)) \\
		& = q^{(\dim(U+T) - \dim(K_1+T))(\dim(K_2) - \dim(U+T))} \\
		& = q^{(u-k_1)(\bar{k}_2 - r)}\text{.}
	\end{align*}
\end{proof}

\begin{lem}[{{Basic Lemma; $q$-analog of \cite[Lemma~1]{Ajoodani94}}}]
	\label{lem:basic}
	Let $U_1 \leq U_2 \leq V$, $k_1 \in\{0,\ldots,\dim(U_1)\}$, $\bar{k}_2\in\{0,\ldots,\dim(V/U_2)\}$ and $N$ a positive integer.
	Furthermore, let $*$ denote the ordinary join $*_U$ (with $U = U_1 = U_2$) or the covering join $*_{U_2/U_1}$ or the avoiding join $*_{\overline{U_2/U_1}}$.

	If $\mathcal{B}^{(1)} \subseteq \gauss{U}{k_1}{q}$ is $(N,t_1)$-partitionable and $\mathcal{B}^{(2)}\subseteq \gauss{U}{\bar{k}_2}{q}$ is $(N,t_2)$-partitionable with integers $t_1, t_2\geq -1$, then $\mathcal{B}^{(1)}\ast\mathcal{B}^{(2)}$ is $(N,t_1+t_2+1)$-partitionable.
\end{lem}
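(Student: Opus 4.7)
By Lemma~\ref{lem:all_joins_the_same}\ref{lem:all_joins_the_same:cov2ord} and \ref{lem:all_joins_the_same:av2ord}, combined with Lemma~\ref{lem:teq_ambientspace} (which ensures $(N,t_i)$-partitionability is preserved when passing from $U_1$ up to $U_2$ on the first factor, respectively from the quotient by $U_2$ down to the quotient by $U_1$ on the second), the covering and avoiding cases reduce to the ordinary-join case. So I would treat only $\ast = \ast_U$ with $U := U_1 = U_2$. Let $\{\mathcal{B}_i^{(1)}\}_{i=1}^N$ and $\{\mathcal{B}_j^{(2)}\}_{j=1}^N$ be the given partitions and, reading indices modulo $N$, define the \enquote{diagonals}
\[
\mathcal{C}_k = \bigcup_{i=1}^N \mathcal{B}_i^{(1)} \ast_U \mathcal{B}_{i+k}^{(2)}, \qquad k = 0,\ldots,N-1.
\]
Each $K$ in a join $K_1 \ast_U \overline{K_2}$ recovers $(K_1,\overline{K_2})$ uniquely as $(K\cap U,(K+U)/U)$, so the individual joins are pairwise disjoint and $\{\mathcal{C}_k\}$ is a genuine partition of $\mathcal{B}^{(1)}\ast_U \mathcal{B}^{(2)}$.

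For a fixed $T\in\gauss{V}{t_1+t_2+1}{q}$ set $T' = T\cap U\in\gauss{U}{t_1'}{q}$ and $\overline T = (T+U)/U\in\gauss{V/U}{t_2'}{q}$; the dimension formula gives $t_1' + t_2' = t_1+t_2+1$. Using the disjointness above and applying Lemma~\ref{lem:lambda_join} (with $r = t_2'$) summand by summand to $\lambda(T,\mathcal{B}_i^{(1)}\ast_U\mathcal{B}_j^{(2)}) = \sum_{K_1,\overline{K_2}} \lambda(T, K_1\ast_U\overline{K_2})$ yields the key factorization
\[
\lambda(T,\mathcal{B}_i^{(1)}\ast_U \mathcal{B}_j^{(2)}) = q^{(u-k_1)(\bar{k}_2-t_2')}\cdot\lambda(T',\mathcal{B}_i^{(1)})\cdot\lambda(\overline{T},\mathcal{B}_j^{(2)}),
\]
where both sides vanish whenever $t_1' > k_1$ or $t_2' > \bar{k}_2$.

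The identity $t_1'+t_2' = t_1+t_2+1$ precludes $t_1'\leq t_1$ and $t_2'\leq t_2$ simultaneously, so exactly one of them holds; the border cases $t_1=-1$ or $t_2=-1$ fit the same dichotomy since $t_1',t_2'\geq 0$. If $t_1'\leq t_1$, Lemma~\ref{tequiv}\ref{tequiv:b} makes $\lambda(T',\mathcal{B}_i^{(1)}) =: c_1$ constant in $i$, and then
\[
\lambda(T,\mathcal{C}_k) = q^{(u-k_1)(\bar{k}_2-t_2')}\, c_1\sum_{i=1}^N\lambda(\overline T,\mathcal{B}_{i+k}^{(2)}) = q^{(u-k_1)(\bar{k}_2-t_2')}\, c_1\, \lambda(\overline T,\mathcal{B}^{(2)})
\]
is independent of $k$; the case $t_2'\leq t_2$ is symmetric. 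Hence the $\mathcal{C}_k$ are pairwise $(t_1+t_2+1)$-equivalent, finishing the proof. The only genuine obstacle is the dichotomy step: the diagonal indexing is precisely what makes one-sided constancy of a $\lambda$-factor suffice, so neither design hypothesis has to match the full target dimension $t_1+t_2+1$. The initial reduction to the ordinary join via Lemma~\ref{lem:teq_ambientspace} is then essentially bookkeeping.
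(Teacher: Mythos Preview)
Your argument is essentially the paper's own proof: both reduce to the ordinary join via Lemma~\ref{lem:all_joins_the_same} and Lemma~\ref{lem:teq_ambientspace}, then build the $N$ classes via a Latin square (you use the cyclic one $A_{ij}\equiv j-i\pmod N$, the paper allows an arbitrary Latin square) and invoke Lemma~\ref{lem:lambda_join} together with the dichotomy $t_1'\leq t_1$ or $t_2'\leq t_2$.

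One small slip to patch: when $t_1=-1$ (or $t_2=-1$) the set $\mathcal{B}^{(1)}$ is $(N,-1)$-partitionable \emph{by fiat}, so no partition $\{\mathcal{B}_i^{(1)}\}$ is actually given, and your sentence ``Let \ldots\ be the given partitions'' is empty there. The paper handles these border cases separately; your diagonal construction still works if you simply take an arbitrary decomposition of $\mathcal{B}^{(1)}$ into $N$ possibly empty pieces, since in the dichotomy the $-1$ side is always the one that gets summed out to $\lambda(\,\cdot\,,\mathcal{B}^{(1)})$.
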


\begin{proof}
	By Lemma~\ref{lem:all_joins_the_same}\ref{lem:all_joins_the_same:cov2ord} and \ref{lem:all_joins_the_same:av2ord} and Lemma~\ref{lem:teq_ambientspace}, it is enough to consider the ordinary join $*_U$.

	Let $\dim(U) = u$.
	If $t_1 \geq 0$, an $(N,t_1)$-partition of $\mathcal{B}^{(1)}$ is denoted by $\{\mathcal{B}^{(1)}_1,\ldots,\mathcal{B}^{(1)}_N\}$, and if $t_2 \geq 0$, an $(N,t_2)$-partition of $\mathcal{B}^{(2)}$ is denoted by $\{\mathcal{B}^{(2)}_1,\ldots,\mathcal{B}^{(2)}_N\}$.

	For $t_1 = t_2 = -1$, there is nothing to show.

	Next, we consider the case $t_1 \geq 0$ and $t_2 = -1$.
	By Lemma~\ref{lem:join_monotonicity},
	\[
		\mathcal{S} = \{\mathcal{B}_1^{(1)}\ast_U \mathcal{B}^{(2)},\ldots ,\mathcal{B}_N^{(1)}\ast_U \mathcal{B}^{(2)}\}
	\]
	is a partition of $\mathcal{B}^{(1)} *_U \mathcal{B}^{(2)}$.
	Let $T\in\gauss{V}{t_1}{q}$ and $r = \dim((U+T)/U)$.
	Then by Lemma~\ref{lem:lambda_join}, for all $i\in\{1,\ldots,N\}$ we have
	\[
		\lambda(T,\mathcal{B}_i^{(1)}\ast_U \mathcal{B}^{(2)})
		= \lambda(U \cap T, \mathcal{B}_i^{(1)})\cdot \lambda((U+T)/U, \mathcal{B}^{(2)})\cdot q^{(u-k_1)(\bar{k}_2 - r)}\text{.}
	\]
	Since $\mathcal{B}^{(1)}$ is an $(N,t_1)$-partition and $\dim(U\cap T) \leq t_1$, this expression is independent of $i$ and therefore, $\mathcal{S}$ is indeed an $(N,t_1)$-partition.

	The case $t_1 = -1$ and $t_2 \geq 0$ is done similarly.%
	\footnote{In fact, $t_1 = -1$, $t_2\geq 0$ is the dual situation of the already considered $t_1 \geq 0$, $t_2 = -1$.}

	Now let $t_1 \geq 0$ and $t_2 \geq 0$.
	Let $A \in \{1,\ldots,N\}^{N\times N}$ be a Latin square of size $N\times N$.
	For $i\in\{1,\ldots,N\}$ we define
	\[
	    \mathcal{S}_i
	    = \bigcup\{\mathcal{B}^{(1)}_x *_U \mathcal{B}^{(2)}_y \mid x,y\in\{1,\ldots,N\} \text{ with }A_{xy} = i\}\text{.}
	\]
	By Lemma~\ref{lem:join_monotonicity}, $\mathcal{S} = \{\mathcal{S}_1,\ldots,\mathcal{S}_N\}$ is a partition of $\mathcal{B}^{(1)} *_U \mathcal{B}^{(2)}$.
	To show that it is indeed an $(N,t_1 + t_2 + 1)$-partition, let $T$ be a $(t_1 + t_2 + 1)$-subspace of $V$ and $r = \dim((U+T)/U)$.
	Then by Lemma~\ref{lem:lambda_join}, for all $i\in\{1,\ldots,N\}$ we have
	\[
		\lambda(T,\mathcal{S}_i)
		= \sum_{(x,y) : A_{xy} = i} \lambda(U \cap T,\mathcal{B}_x^{(1)}) \cdot\lambda((U+T)/U,\mathcal{B}_y^{(2)}) \cdot q^{(u-k_1)(\bar{k}_2 - r)}\text{.}
	\]
	If $\dim(U\cap T) \leq t_1$, then $\lambda(U\cap T,\mathcal{B}_x^{(1)})$ is independent of $x$ and hence by the Latin square property
	\[
		\lambda(T,\mathcal{S}_i)
		= \lambda({U \cap T},\mathcal{B}_1^{(1)})\cdot q^{(v-k_1)(\bar{k}_2 - r)} \sum_{y=1}^N \lambda((U+T)/U,\mathcal{B}_y^{(2)})
	\]
	is independent of $i$.
	Otherwise $\dim(U\cap T) > t_1$, implying $\dim((U+T)/U) = \dim(T) - \dim(U\cap T) < t_2 + 1$, so $\lambda((U+T)/U,\mathcal{B}_y^{(2)}) = \lambda((U+T)/U,\mathcal{B}_1^{(2)})$ for all $y\in\{1,\ldots,N\}$ and therefore by the Latin square property also
	\[
		\lambda(T,\mathcal{S}_i)
		= \lambda((U+T)/U,\mathcal{B}_1^{(2)})\cdot q^{(v-k_1)(\bar{k}_2 - r)} \sum_{x=1}^N \lambda({U \cap T},\mathcal{B}_x^{(1)})
	\]
	is independent of $i$.
\end{proof}

The combination of the \enquote{$q$-Pascal decomposition} in Lemma~\ref{lem:q_pascal}\ref{lem:q_pascal:decomposition} with the theory of $(N,t)$-partitionable sets allows an alternative proof for \cite[Cor.~20]{trivial}, which also serves as a prototype for the recursive constructions of large sets we will see in Section~\ref{sect:series}.

\begin{lem}[{{\cite[Cor.~20]{trivial}}}]
\label{tvtq}
If there exists an $\LS_q[N](t,k-1,v-1)$ and an $\LS_q[N](t,k,v-1)$, then there exists an $\LS_q[N](t,k,v)$.
\end{lem}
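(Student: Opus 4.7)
The plan is to combine the $q$-Pascal decomposition from Lemma~\ref{lem:q_pascal}\ref{lem:q_pascal:decomposition} with the Basic Lemma~\ref{lem:basic}, thereby lifting the $(N,t)$-partitions encoded by the two hypothesized large sets from $V_{v-1}$ up to the ambient space $V$.

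First, I would translate the hypotheses into the language of $(N,t)$-partitionable sets. By Lemma~\ref{lma:ls_to_partitionable}, the existence of an $\LS_q[N](t,k-1,v-1)$ means precisely that the set $\gauss{V_{v-1}}{k-1}{q}$ is $(N,t)$-partitionable, and similarly the existence of an $\LS_q[N](t,k,v-1)$ means that $\gauss{V_{v-1}}{k}{q}$ is $(N,t)$-partitionable. Moreover, the singleton $\gauss{V/V}{0}{q} = \{V/V\}$ is $(N,-1)$-partitionable by definition.

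Next, I would apply the Basic Lemma~\ref{lem:basic} with $U_1 = V_{v-1}$, $U_2 = V$ separately to the covering join $\gauss{V_{v-1}}{k-1}{q} *_{V/V_{v-1}} \gauss{V/V}{0}{q}$ and to the avoiding join $\gauss{V_{v-1}}{k}{q} *_{\overline{V/V_{v-1}}} \gauss{V/V}{0}{q}$. In each case, one factor is $(N,t)$-partitionable and the other is $(N,-1)$-partitionable, so the Basic Lemma yields an $(N, t + (-1) + 1) = (N,t)$-partition of the join.

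Finally, Lemma~\ref{lem:q_pascal}\ref{lem:q_pascal:decomposition} states that these two joins form a disjoint union whose total is $\gauss{V}{k}{q}$. Invoking Lemma~\ref{lem:disjoint_union_partitionable} on this disjoint union gives an $(N,t)$-partition of $\gauss{V}{k}{q}$, and a second application of Lemma~\ref{lma:ls_to_partitionable} translates this back into the desired $\LS_q[N](t,k,v)$.

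There is no real obstacle here — this is essentially the prototype recursion advertised just before the lemma — but the step that requires attention is matching the parameters in the Basic Lemma correctly. One has to notice that the role of $\bar{k}_2$ is played by $0$, so the second factor in each join is the trivial singleton $\gauss{V/V}{0}{q}$; using only $(N,-1)$-partitionability on this trivial factor is exactly what lets the Basic Lemma preserve the index $t$ rather than shifting it.
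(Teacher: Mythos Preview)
Your proposal is correct and matches the paper's own proof essentially step for step: translate the two large sets into $(N,t)$-partitionable Graßmannians via Lemma~\ref{lma:ls_to_partitionable}, apply the Basic Lemma~\ref{lem:basic} with the trivially $(N,-1)$-partitionable factor $\gauss{V/V}{0}{q}$, combine the two joins using the $q$-Pascal decomposition of Lemma~\ref{lem:q_pascal}\ref{lem:q_pascal:decomposition} and Lemma~\ref{lem:disjoint_union_partitionable}, and translate back. The only cosmetic difference is that the paper writes the trivial second factor as $(\mathbf{0}+V)/V$ rather than $\gauss{V/V}{0}{q}$.
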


\begin{proof}
	By Lemma~\ref{lma:ls_to_partitionable}, both $\gauss{V_{v-1}}{k-1}{q}$ and $\gauss{V_{v-1}}{k}{q}$ are $(N,t)$-partitionable.
	Furthermore, $(\mathbf{0} + V)/V$ is $(N,-1)$-partitionable.
	The application of the Basic Lemma~\ref{lem:basic} yields that $\gauss{V_{v-1}}{k-1}{q} *_{V/V_{v-1}} (\mathbf{0} + V)/V$ and $\gauss{V_{v-1}}{k}{q} *_{\overline{V/V_{v-1}}} (\mathbf{0} + V)/V$ are $(N,t+(-1)+1) = (N,t)$ partitionable.
	By Lemma~\ref{lem:q_pascal}\ref{lem:q_pascal:decomposition}, these two sets form a partition of $\gauss{V}{k}{q}$.
	Now by Lemma~\ref{lem:disjoint_union_partitionable}, $\gauss{V}{k}{q}$ is $(N,t)$-partitionable.
	Therefore by Lemma~\ref{lma:ls_to_partitionable}, there exists an $\LS_q[N](t,k,v)$.
\end{proof}

\section{Examples of Halvings}
\label{sect:halving_computer_constructions}
In this section, we look at large sets with the parameters $\LS_q[2](2,3,6)$, which are admissible if and only if $q$ is odd.
By Remark~\ref{rem:ls}\ref{rem:ls:halving}, such large sets correspond to subspace designs with the parameters
\[
	2\text{-}\left(6,\;3,\;\frac{1}{2}(q^2 + 1)(q + 1)\right)_q\text{,}
\]
which evaluates to $2$-$(6,3,20)_3$ for $q=3$ and $2$-$(6,3,78)_5$ for $q=5$.
In the case $q=3$, such a subspace design has been constructed in \cite{MBraun05}.
We are going to extend this result into two directions.

Denoting the $\PGL(6,3)$-image of a Singer cycle and a matching Frobenius automorphism by $\bar{\sigma}$ and $\bar{\phi}$, respectively, we will show the following counting statement in this section:

\begin{thm}
\label{thm:types_halving_q3}
Let $G = \langle\bar{\sigma}^2,\bar{\phi}^2\rangle$.
There exist exactly $57275$ isomorphism types of $G$-invariant $2$-$(6,3,20)_3$ designs.
\end{thm}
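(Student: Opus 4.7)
The plan is to apply the Kramer--Mesner method with $G$ prescribed as a group of automorphisms, enumerate all $G$-invariant $2$-$(6,3,20)_3$ designs, and then reduce to isomorphism types.

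First, realize the prescribed group explicitly. Identify $V = \GF(3)^6$ with $\GF(3^6)$; take $\sigma$ to be multiplication by a generator of $\GF(3^6)^\ast$ and $\phi\colon x\mapsto x^3$. Then $\bar\sigma\in\PGL(6,3)$ has order $(3^6-1)/(3-1)=364$, $\bar\phi\in\PGammaL(6,3)$ has order $6$, and they satisfy $\bar\phi\,\bar\sigma\,\bar\phi^{-1}=\bar\sigma^3$. From this the order and structure of $G=\langle\bar\sigma^2,\bar\phi^2\rangle$ (an index-$6$ subgroup of $\langle\bar\sigma,\bar\phi\rangle$) follow by direct computation.

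Second, compute the orbits of $G$ on $\gauss{V}{2}{3}$ and $\gauss{V}{3}{3}$, and assemble the Kramer--Mesner matrix $M^G_{2,3}$ whose rows are indexed by the $2$-orbits, whose columns are indexed by the $3$-orbits, and whose entry at $(\omega,\Omega)$ is
\[
M^G_{2,3}(\omega,\Omega)=\#\{B\in\Omega\mid T\leq B\}
\]
for any fixed $T\in\omega$ (well-defined by $G$-invariance). Because $|G|$ is comparable to $\gauss{6}{3}{3}$, the resulting matrix is small enough for exhaustive search.

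Third, enumerate all $\{0,1\}$-vectors $x$ indexed by the $3$-orbits satisfying $M^G_{2,3}\,x = 20\cdot\mathbf{1}$, using backtrack search or integer linear programming. Each such $x$ corresponds to a $G$-invariant $2$-$(6,3,20)_3$ design, obtained as the union of the selected $3$-orbits; call this set of designs $\mathcal{S}$.

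Finally, reduce $\mathcal{S}$ to isomorphism types. The normalizer $N = N_{\PGammaL(6,3)}(G)$ acts on $\mathcal{S}$, and designs lying in the same $N$-orbit are projectively equivalent, hence isomorphic. Compute $N$ explicitly from the description of $G$ above and let it act on $\mathcal{S}$ to extract orbit representatives. Because two $G$-invariant designs might in principle be isomorphic via an element of $\PGammaL(6,3)$ that does not normalize $G$, perform a final isomorphism check on the remaining representatives using canonical forms (for instance via the incidence structure between $3$-subspaces and the $\gauss{6}{2}{3}$ $2$-subspaces they contain) or by comparing full automorphism groups. What survives is the claimed count $57275$.

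The main obstacle is computational rather than conceptual: the proof is a finite verification whose feasibility depends on $G$ being chosen large enough to shrink $M^G_{2,3}$ to a searchable size, yet small enough to admit a rich family of invariant designs. Step three is the bottleneck, since the number of $\{0,1\}$-solutions can grow very rapidly with the number of $3$-orbits, and step four requires a careful isomorphism test to ensure no equivalences are missed. The choice $G=\langle\bar\sigma^2,\bar\phi^2\rangle$ (of index $6$ in $\langle\bar\sigma,\bar\phi\rangle$) is calibrated precisely so that both enumeration and isomorphism reduction remain tractable.
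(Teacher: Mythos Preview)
Your outline is sound through the Kramer--Mesner enumeration, but it diverges from the paper at the isomorphism-reduction step, and there is a small arithmetic slip.

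First the slip: $\lvert\langle\bar\sigma,\bar\phi\rangle\rvert=364\cdot 6=2184$ while $\lvert G\rvert=182\cdot 3=546$, so $[\,\langle\bar\sigma,\bar\phi\rangle:G\,]=4$, not $6$.

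More substantially, the paper eliminates your ``final isomorphism check'' entirely by a Sylow argument. The subgroup $P=\langle\bar\sigma^{52}\rangle$ of order $7$ is a Sylow $7$-subgroup not merely of $G$ but of all of $\PGL(6,3)$ (since $7\mid 3^6-1$ but $7\nmid 3^i-1$ for $1\le i\le 5$), and its normalizer in $\PGL(6,3)$ is computed to be exactly $N=\langle\bar\sigma,\bar\phi\rangle$. A general result of Laue \cite[Th.~3.1]{Laue} then guarantees that two $G$-invariant designs are $\PGL(6,3)$-isomorphic if and only if they lie in the same $N$-orbit: any isomorphism conjugates $P$ to another Sylow $7$-subgroup of the target design's automorphism group, and Sylow conjugacy inside that group lets one correct the isomorphism so that it normalizes $P$, hence lies in $N$. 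No canonical-form test is needed.

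The paper then finishes by a short additional Kramer--Mesner computation: for each proper overgroup of $G$ in $N$ (there are three intermediate ones, since $N/G\cong\Z/2\Z\times\Z/2\Z$) one checks that no invariant $2$-$(6,3,20)_3$ design exists. Hence every $G$-invariant design has $N$-stabilizer exactly $G$, every $N$-orbit has size $[N:G]=4$, and the isomorphism count is simply $229100/4=57275$.

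Your brute-force plan would reach the same number if carried out, but the Sylow/normalizer argument is what turns a potentially heavy pairwise isomorphism test on tens of thousands of designs into a one-line division.
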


Furthermore, by constructing a $2$-$(6,3,78)_5$ subspace design, we will get:

\begin{thm}
	\label{thm:halving_example}
	There exists an $\LS_q[2](2,3,6)$ for $q\in\{3,5\}$.
\end{thm}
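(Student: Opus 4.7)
The plan splits along the two values of $q$. For $q = 3$ there is essentially nothing left to prove: Theorem~\ref{thm:types_halving_q3}, stated just above, counts $57275$ isomorphism types of $G$-invariant $2$-$(6,3,20)_3$ subspace designs, so in particular at least one such design $(V,\mathcal{B})$ exists, and by Remark~\ref{rem:ls}\ref{rem:ls:halving} the pair $\{(V,\mathcal{B}),(V,\gauss{V}{3}{3}\setminus\mathcal{B})\}$ is a halving $\LS_3[2](2,3,6)$. This also recovers the original construction of~\cite{MBraun05}.

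For $q = 5$ the plan is to construct a $2$-$(6,3,78)_5$ subspace design by the Kramer--Mesner method with a prescribed automorphism group. First, I would identify $V = \GF(5)^6$ with $\GF(5^6)$, let $\sigma\in\GL(V)$ be multiplication by a primitive element of $\GF(5^6)$, and let $\phi$ be the Frobenius $x\mapsto x^5$; then $\phi$ normalizes $\langle\sigma\rangle$. I would pick a subgroup $G$ of the image of $\langle\sigma,\phi\rangle$ in $\PGammaL(6,5)$, compute the orbits of $G$ on $\gauss{V}{2}{5}$ and on $\gauss{V}{3}{5}$, and assemble the Kramer--Mesner incidence matrix $M$ whose $(i,j)$-entry is the number of $3$-subspaces in the $j$th $3$-orbit containing a fixed $T$ in the $i$th $2$-orbit. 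A $G$-invariant $2$-$(6,3,78)_5$ subspace design then corresponds precisely to a $0/1$-vector $x$ satisfying $Mx = 78\cdot\mathbf{1}$, whose $1$-entries select the $3$-orbits making up the block set $\mathcal{B}$.

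The main obstacle is the choice of $G$: it must be small enough for $Mx = 78\cdot\mathbf{1}$ to admit any $0/1$ solution at all, yet large enough that the column count of $M$ (and hence the search space) stays within reach of an exact solver. The natural family to scan is $\langle\bar\sigma^a,\bar\phi^b\rangle$ for divisors $a\mid (5^6-1)/(5-1)$ and $b\mid 6$, mirroring the strategy that works in the $q=3$ case (Theorem~\ref{thm:types_halving_q3} and~\cite{MBraun05}). For a successful $G$, I would feed $Mx = 78\cdot\mathbf{1}$ to a branch-and-cut or LLL-based enumeration, output the resulting $\mathcal{B}$ explicitly so that the construction is independently reproducible, and verify the $2$-design property directly from $\mathcal{B}$, independently of the Kramer--Mesner machinery. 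Finally, Remark~\ref{rem:ls}\ref{rem:ls:halving} promotes $(V,\mathcal{B})$ to the halving $\{(V,\mathcal{B}),(V,\gauss{V}{3}{5}\setminus\mathcal{B})\} = \LS_5[2](2,3,6)$, finishing the case $q = 5$.
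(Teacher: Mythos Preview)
Your plan matches the paper's approach exactly: for $q=3$ invoke the existence implied by Theorem~\ref{thm:types_halving_q3} (originally \cite{MBraun05}), and for $q=5$ run Kramer--Mesner with a subgroup of the Singer normalizer. The paper's successful choice is $G=\langle\bar\sigma^{2},\bar\phi\rangle$ of order $11718$, and it records the explicit orbit representatives that constitute a solution; once you fix that $G$ your proposal becomes the paper's proof.
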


\subsection{The method of Kramer and Mesner}
The idea is the following~\cite{Kramer-Mesner,Miyakawa-Munemasa-Yoshiara-1995,BKL05}:
Fix some parameter set $t$-$(v,k,\lambda)_q$ and a subgroup $G$ of $\PGammaL(V)$.
Then the action of $G$ induces partitions $\gauss{V}{t}{q} = \bigcup_{i=1}^\tau \mathcal{T}_i$ and $\gauss{V}{k}{q} = \bigcup_{j=1}^\kappa \mathcal{K}_j$ into orbits.
We pick orbit representatives $T_i\in\mathcal{T}_i$ and $K_j\in\mathcal{K}_j$.
Any subspace design invariant under $G$ will have the form $(V,\mathcal{B})$ with $\mathcal{B} = \bigcup_{j\in J} \mathcal{K}_j$ and $J \subseteq \{1,\ldots,\kappa\}$.
A set $J\subseteq \{1,\ldots,\kappa\}$ induces a $t$-$(v,k,\lambda)_q$ design if and only if its characteristic vector $\chi_J \in\{0,1\}^\kappa$ is a solution of the system of linear integer equations
\[
	A \chi_J = b\text{,}
\]
where $A = (a_{ij}) \in \Z^{\kappa\times\tau}$ is the matrix with the entries
\[
    a_{ij}
    = \lambda(T_i,\mathcal{K}_j)
    = \#\{K\in\mathcal{K}_j \mid T_i \leq K\}
\]
and $b\in\Z^{\tau}$ is the vector of length $\tau$ with all entries equal to $\lambda$.
This equation system will be attacked computationally.

Thus, the method of Kramer and Mesner can be seen as kind of a trade-off:
On the one hand, the group $G$ reduces the size of the equation system, but on the other hand, we can only find subspace designs invariant under $G$ in this way.

Of course, the method can only be successful if the selected group $G$ admits a subspace design of the given parameters.
In the past, it has proven quite fruitful to prescribe certain subgroups of the $\PGL$-image of the normalizer of a Singer cycle \cite{Miyakawa-Munemasa-Yoshiara-1995,BKL05,MBraun05,SBraunDiplomarbeit,BKOW}.

Representing the $\GF(q)$-vector space $V$ of dimension $v$ as a finite field $\GF(q^v)$ and picking a primitive element $\alpha$ of $\GF(q^v)^*$, the mapping $\sigma: V\to V$, $x\mapsto \alpha x$ is in $\GL(V)$.
It is of order $q^v-1$ and an example of a \emph{Singer cycle} of $\GL(V)$.
The normalizer $N(\sigma)$ of $\sigma$ in $\GL(V)$ is given by $\langle\sigma,\phi\rangle = \langle\sigma\rangle\cdot\langle\phi\rangle$, where $\phi\in\GL(V)$ is the automorphism $x\mapsto x^q$ \cite[Satz~7.3]{Huppert-1967}.
In the case $q$ prime, $\phi$ is the Frobenius automorphism of $\GF(q^v)$.
We denote the images of $\sigma$ and $\phi$ in $\PGL(V)$ by $\bar{\sigma}$ and $\bar{\phi}$, respectively.
The image $\bar{\sigma}$ has order $\gauss{v}{1}{q}$ and $\langle\bar{\sigma}\rangle$ acts regularly on $\gauss{V}{1}{q}$.
The image $\bar{\phi}$ in $\PGL(V)$ has order $v$.

In the following, the orbit representatives $K_j$ will be given by their canonical $3\times 6$-matrices (with respect to some specified basis).
For a compact representation, the $\GF(q)$ entries will be represented by numbers in $\{0,\ldots,q-1\}$, and each row $(a_5,\ldots,a_0)$ with $a_i\in\{0,\ldots,q-1\}$ will be given in the $q$-adic representation $\sum_{i=0}^5 a_i q^i$.
So each canonical matrix is represented by a triple of numbers (one number for each row).

\subsection{The case $q=3$}
The polynomial
\[
    X^6 - X^4 + X^2  -X - 1\in\GF(3)[X]
\]
is primitive.
So a primitive element of $\GF(3^6)$ is given by any root $\alpha$, and a basis of $V = \GF(3^6)$ as a $\GF(3)$-vector space is given by $\{1,\alpha,\alpha^2,\ldots,\alpha^5\}$.
With respect to this basis, the resulting mappings $\sigma$ and $\phi$ are represented by the matrices
\[
\begin{pmatrix}
0 & 0 & 0 & 0 & 0 & 1 \\
1 & 0 & 0 & 0 & 0 & 1 \\
0 & 1 & 0 & 0 & 0 & 2 \\
0 & 0 & 1 & 0 & 0 & 0 \\
0 & 0 & 0 & 1 & 0 & 1 \\
0 & 0 & 0 & 0 & 1 & 0
\end{pmatrix}
\qquad\text{and}\qquad
\begin{pmatrix}
1 & 0 & 1 & 0 & 0 & 1 \\
0 & 0 & 1 & 1 & 1 & 2 \\
0 & 0 & 2 & 1 & 1 & 2 \\
0 & 1 & 0 & 0 & 2 & 1 \\
0 & 0 & 1 & 1 & 1 & 0 \\
0 & 0 & 0 & 0 & 2 & 2
\end{pmatrix}\text{.}
\]
The group $G = \langle \bar{\sigma}^2, \bar{\phi}^2\rangle$ has order $546$ and partitions $\gauss{V}{3}{3}$ into $2$ orbits of length $14$, $18$ orbits of length $182$ and $56$ orbits of length $546$.
A computer search showed that there are exactly $229100$ possibilities to build a $2$-$(6,3,20)_3$ design as a union of orbits.
Now we are able to show Theorem~\ref{thm:types_halving_q3}.

\begin{proof}[Proof of Theorem~\ref{thm:types_halving_q3}]
Let $P = \langle\bar{\sigma}^{52}\rangle$.
$P$ is a Sylow $7$-subgroup of $G$.
The normalizer of $P$ in $\PGL(6,3)$ is computed as $N = \langle\bar{\sigma},\bar{\phi}\rangle$.
So by \cite[Th.~3.1]{Laue}, to count the isomorphism types of $G$-invariant subspace designs, it is enough to look at the action of the subgroup $N$.

By $N/G \cong \Z/2\Z\times \Z/2\Z$ and the correspondence theorem, the subgroups of $N$ properly containing $G$ are given by $\langle \bar{\sigma},\bar{\phi}^2\rangle$, $\langle \bar{\sigma}^2,\bar{\phi}\rangle$, $\langle \bar{\sigma}^2,\bar{\sigma}\bar{\phi},\bar{\phi}^2\rangle$ and $N$.
Again using the method of Kramer and Mesner, we checked computationally that there is no $2$-$(6,3,20)_3$ design invariant under one of these groups.
So $N_{\mathcal{D}} = G$ for all $G$-invariant $2$-$(6,3,20)_3$ designs $\mathcal{D}$.
Now by the orbit-stabilizer theorem, the action of $N$ partitions the set of $G$-invariant designs (which has size $229100$ by our computer search) into orbits of size $[N : G] = 4$.
This shows that the number of isomorphism types is $229100 / 4 = 57275$.
\end{proof}

\begin{rem}
	Besides $G = \langle\bar{\sigma}^2,\bar{\phi}^2\rangle$ there is another comparably large subgroup $G'$ of $\PGL(6,3)$ admitting a $G'$-invariant $2$-$(6,3,20)_3$ design.
	It is the normalizer of a Singer cycle in $\PGL(5,3)$ of order $605$, embedded into $\PGL(6,3)$.
\end{rem}

\subsection{The case $q=5$}
Here, the primitive polynomial
\[
    X^6 + X^4 - X^3 + X^2 + 2\in\GF(5)[X]
\]
is chosen.
The resulting mappings $\sigma$ and $\phi$ are represented by the matrices
\[
\begin{pmatrix}
0 & 0 & 0 & 0 & 0 & 3 \\
1 & 0 & 0 & 0 & 0 & 0 \\
0 & 1 & 0 & 0 & 0 & 4 \\
0 & 0 & 1 & 0 & 0 & 1 \\
0 & 0 & 0 & 1 & 0 & 4 \\
0 & 0 & 0 & 0 & 1 & 0
\end{pmatrix}
\qquad\text{and}\qquad
\begin{pmatrix}
1 & 0 & 0 & 2 & 0 & 4 \\
0 & 0 & 3 & 0 & 3 & 4 \\
0 & 0 & 2 & 4 & 4 & 0 \\
0 & 0 & 4 & 4 & 1 & 3 \\
0 & 0 & 0 & 4 & 1 & 4 \\
0 & 1 & 3 & 1 & 0 & 2
\end{pmatrix}\text{.}
\]
The group $G = \langle \bar{\sigma}^2, \bar{\phi}\rangle$ has order $11718$.
The orbit sizes on $\gauss{V}{3}{5}$ are given by $(63^2 \cdot 1953^2 \cdot 3906^{24} \cdot 5859^{20} \cdot 11718^{200})$.
A solution is given by the following selection of orbits:
\begin{itemize}
	\item $1$ orbit of size $63$:\\
	{\small
	$ (3221, 728, 155)$
	}
	\item $1$ orbit of size $1953$:\\
	{\small
	$( 3133, 898, 32 )$
	}
	\item $12$ orbits of size $3906$:\\
	{\small
	$( 3144, 132, 49 )$,
	$( 627, 136, 49 )$,
	$( 3202, 631, 146 )$,
	$( 3248, 749, 246 )$, \\
	$( 3157, 662, 229 )$,
	$( 3265, 1125, 44 )$,
	$( 3224, 637, 145 )$,
	$( 3139, 647, 41 )$, \\
	$( 3643, 771, 45 )$,
	$( 3226, 739, 239 )$,
	$( 3383, 1136, 43 )$,
	$( 3263, 756, 45 )$
	}
	\item $14$ orbits of size $5859$:\\
	{\small
	$( 3224, 714, 205 )$,
	$( 3167, 629, 129 )$,
	$( 3174, 701, 242 )$,
	$( 3221, 728, 182 )$, \\
	$( 3151, 639, 132 )$,
	$( 3207, 641, 247 )$,
	$( 3220, 635, 202 )$,
	$( 3173, 736, 166 )$, \\
	$( 5629, 146, 38 )$,
	$( 3643, 1017, 26 )$,
	$( 3190, 639, 206 )$,
	$( 3227, 670, 157 )$, \\
	$( 3246, 720, 210 )$,
	$( 3127, 137, 35 )$
	}
	\item $98$ orbits of size $11718$:\\
	{\small
    $( 3262, 758, 27 )$,
    $( 3143, 749, 225 )$,
    $( 3232, 659, 198 )$,
    $( 3134, 731, 162 )$, \\
    $( 3209, 672, 165 )$,
    $( 3236, 633, 219 )$,
    $( 3194, 748, 211 )$,
    $( 3229, 669, 179 )$, \\
    $( 3381, 878, 35 )$,
    $( 3236, 698, 246 )$,
    $( 3157, 747, 138 )$,
    $( 3150, 659, 194 )$, \\
    $( 3233, 719, 223 )$,
    $( 3228, 663, 164 )$,
    $( 3207, 661, 237 )$,
    $( 4392, 144, 44 )$, \\
    $( 3130, 774, 26 )$,
    $( 3169, 642, 246 )$,
    $( 5012, 141, 41 )$,
    $( 3181, 745, 232 )$, \\
    $( 3220, 717, 148 )$,
    $( 3131, 718, 167 )$,
    $( 3233, 680, 196 )$,
    $( 3182, 702, 181 )$, \\
    $( 3649, 1138, 41 )$,
    $( 3186, 629, 161 )$,
    $( 3147, 715, 218 )$,
    $( 3156, 686, 198 )$, \\
    $( 3645, 641, 44 )$,
    $( 3510, 880, 1 )$,
    $( 3500, 636, 29 )$,
    $( 3244, 647, 129 )$, \\
    $( 3231, 699, 203 )$,
    $( 3226, 717, 228 )$,
    $( 3638, 1014, 38 )$,
    $( 3147, 696, 143 )$, \\
    $( 3245, 639, 197 )$,
    $( 3246, 718, 222 )$,
    $( 3140, 143, 31 )$,
    $( 3173, 669, 190 )$, \\
    $( 3221, 719, 161 )$,
    $( 5000, 131, 42 )$,
    $( 3513, 1145, 32 )$,
    $( 3170, 721, 241 )$, \\
    $( 3199, 714, 157 )$,
    $( 3232, 685, 201 )$,
    $( 3203, 644, 232 )$,
    $( 3223, 649, 218 )$, \\
    $( 3176, 677, 5 )$,
    $( 3167, 656, 228 )$,
    $( 3145, 888, 36 )$,
    $( 3509, 629, 33 )$, \\
    $( 3232, 694, 134 )$,
    $( 3211, 660, 207 )$,
    $( 3727, 1100, 8 )$,
    $( 3376, 954, 5 )$, \\
    $( 3274, 752, 48 )$,
    $( 3137, 670, 214 )$,
    $( 3201, 647, 210 )$,
    $( 3209, 644, 180 )$, \\
    $( 3132, 697, 160 )$,
    $( 3175, 628, 160 )$,
    $( 3154, 1000, 9 )$,
    $( 3233, 745, 159 )$, \\
    $( 3396, 1012, 48 )$,
    $( 3140, 631, 224 )$,
    $( 3153, 677, 171 )$,
    $( 3149, 718, 221 )$, \\
    $( 3380, 1139, 27 )$,
    $( 3146, 665, 242 )$,
    $( 3238, 721, 206 )$,
    $( 3225, 703, 182 )$, \\
    $( 3163, 733, 249 )$,
    $( 3227, 711, 139 )$,
    $( 3204, 704, 204 )$,
    $( 3201, 738, 163 )$, \\
    $( 3174, 725, 152 )$,
    $( 3225, 648, 223 )$,
    $( 3192, 667, 173 )$,
    $( 3140, 684, 140 )$, \\
    $( 3643, 1015, 46 )$,
    $( 3141, 636, 249 )$,
    $( 3166, 667, 202 )$,
    $( 3230, 734, 130 )$, \\
    $( 3160, 722, 218 )$,
    $( 3188, 675, 170 )$,
    $( 3219, 681, 197 )$,
    $( 3212, 662, 167 )$, \\
    $( 3230, 635, 210 )$,
    $( 3165, 715, 177 )$,
    $( 3627, 627, 6 )$,
    $( 3187, 711, 125 )$, \\
    $( 3478, 803, 8 )$,
    $( 3231, 748, 223 )$,
    $( 3131, 690, 192 )$,
    $( 3222, 625, 148 )$, \\
    $( 3504, 1003, 32 )$,
    $( 3242, 714, 226 )$
	}
\end{itemize}

\begin{rem}
	In both cases, we have checked that the prescribed group $G$ is maximal in $\langle \bar{\sigma},\bar{\phi}\rangle$ with the property that a $G$-invariant design with the parameters in question exists.
\end{rem}

\section{Infinite two-parameter series of halvings}
\label{sect:series}
Now we are going to recursively construct infinite two-parameter families of halvings from the two halvings in Theorem~\ref{thm:halving_example}.
The strategy is to start with a suitable decomposition of $\gauss{V}{k}{q}$ into joins as discussed in Section~\ref{sect:decompositions}, and then to populate the joins with already known halvings.
The theory of Section~\ref{sect:n-t-partitionable} will imply the existence of a halving on $\gauss{V}{k}{q}$.

We start with an example to illustrate our approach.
\begin{ex}
	\label{ex:construction}
	Let $v = 10$, $k = 3$, $q \in\{3,5\}$ and $N = 2$.
	By Example~\ref{ex:decomposition_avoid_join}, a partition of $V$ is given by
	\begin{align*}
	\gauss{V}{3}{q} &
	= \gauss{V_3}{0}{q}*_{\overline{V_4/V_3}}\gauss{V/V_4}{3}{q} 
	\quad\cup\quad \gauss{V_4}{1}{q}*_{\overline{V_5/V_4}}\gauss{V/V_5}{2}{q} \\
	& \phantom{{}={}} \cup\quad \gauss{V_5}{2}{q}*_{\overline{V_6/V_5}}\gauss{V/V_6}{1}{q}
	\quad\cup\quad \gauss{V_6}{3}{q}*_{\overline{V_7/V_6}}\gauss{V/V_7}{0}{q}\text{.}
	\end{align*}
	In the following, we keep only the relevant information of this formula, meaning that we drop the factors of the avoiding join, replace the involved vector spaces by their dimension and remove the subscript~$q$.
	In this sense, the above formula reduces to the \emph{decomposition type}
	\[
		\gaussm{10}{3} = \gaussm{3}{0}*\gaussm{6}{3} \;\cup\; \gaussm{4}{1}*\gaussm{5}{2} \;\cup\; \gaussm{5}{2}*\gaussm{4}{1} \;\cup\; \gaussm{6}{3}*\gaussm{3}{0}\text{.}
	\]
	By Theorem~\ref{thm:halving_example}, $\gaussm{6}{3}$ is $(2,2)$-partitionable.
	By considering derived large sets (see Theorem~\ref{lem:resLS}), $\gaussm{5}{2}$ is $(2,1)$-partitionable and $\gaussm{4}{1}$ is $(2,0)$-partitionable.
	Moreover, $\gaussm{3}{0}$ is $(2,-1)$-partitionable, of course.

	Now by the Basic Lemma~\ref{lem:basic}, $\gaussm{3}{0}*\gaussm{6}{3}$ is $(2,(-1) + 2 + 1) = (2,2)$-partitionable.
	Similarly, $\gaussm{4}{1}*\gaussm{5}{2}$, $\gaussm{5}{2}*\gaussm{4}{1}$ and $\gaussm{6}{3}*\gaussm{3}{0}$ are $(2,2)$-partitionable, too.
	Finally by Lemma~\ref{lem:disjoint_union_partitionable}, $\gaussm{10}{3}$ is $(2,2)$-partitionable, so by Lemma~\ref{lma:ls_to_partitionable}, there exists an $\LS_q[2](2,3,10)$.
\end{ex}

\begin{lem}
	\label{lma:one_parameter_recursion}
	If there exists an $\LS_q[N](2,3,6)$, then there exists an $\LS_q[N](2,3,v)$ for all integers $v\geq 6$ with $v\equiv 2\mod* 4$.
\end{lem}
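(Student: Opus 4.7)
The plan is to proceed by induction on $v$ over the set $\{w\in\N : w\equiv 2\mod* 4,\; w\geq 6\}$, with base case $v=6$ supplied by hypothesis. The inductive step, namely producing an $\LS_q[N](2,3,v+4)$ from an $\LS_q[N](2,3,v)$, will generalize Example~\ref{ex:construction} verbatim.

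First, I would apply Theorem~\ref{thm:avoidpartition} to an ambient space $V$ of dimension $v+4$ with $k=3$ and $s=3$. In the shorthand of Example~\ref{ex:construction}, this produces the decomposition type
\[
\gaussm{v+4}{3} = \gaussm{3}{0} * \gaussm{v}{3} \;\cup\; \gaussm{4}{1} * \gaussm{v-1}{2} \;\cup\; \gaussm{5}{2} * \gaussm{v-2}{1} \;\cup\; \gaussm{6}{3} * \gaussm{v-3}{0},
\]
where each $*$ denotes the corresponding avoiding join and each $\gaussm{m}{i}$ abbreviates the Grassmannian of $i$-subspaces of a specific $m$-dimensional subspace or quotient of $V$.

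Second, I would verify that each of the four avoiding joins is $(N,2)$-partitionable via the Basic Lemma~\ref{lem:basic}. For the two ``corner'' pieces, one factor is $(N,-1)$-partitionable by definition, while the other is $(N,2)$-partitionable, arising either from the induction hypothesis $\LS_q[N](2,3,v)$ (via Lemma~\ref{lma:ls_to_partitionable}) or from the base case $\LS_q[N](2,3,6)$. For the two middle pieces, the required $(N,0)$- and $(N,1)$-partitions of the factors come from $\LS_q[N](1,2,v-1)$, $\LS_q[N](0,1,v-2)$, $\LS_q[N](1,2,5)$, and $\LS_q[N](0,1,4)$; each of these exists by one or two applications of the derived-large-set operation in Lemma~\ref{lem:resLS} to the induction hypothesis or the base case. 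Since $t_1+t_2+1=2$ in each of the four combinations, the Basic Lemma delivers $(N,2)$-partitionability of every avoiding join.

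Finally, Lemma~\ref{lem:disjoint_union_partitionable} gives $(N,2)$-partitionability of $\gauss{V}{3}{q}$, and Lemma~\ref{lma:ls_to_partitionable} extracts an $\LS_q[N](2,3,v+4)$, closing the induction. I do not foresee a genuine obstacle: the choice $s=3$ is dictated by the constraint that both boundary factors $\gaussm{s+3}{3}$ and $\gaussm{v+4-s-1}{3}$ must support an already-available halving (the base case and the induction hypothesis respectively), and once $s=3$ is fixed, every other ingredient follows automatically from Lemma~\ref{lem:resLS}.
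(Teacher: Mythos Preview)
Your proposal is correct and follows essentially the same argument as the paper: both proceed by induction, apply Theorem~\ref{thm:avoidpartition} with $k=s=3$ to obtain the four-term avoiding-join decomposition, supply the needed $(N,t)$-partitions of the factors from the induction hypothesis, the base case $\LS_q[N](2,3,6)$, and their derived large sets via Lemma~\ref{lem:resLS}, then finish with the Basic Lemma~\ref{lem:basic}, Lemma~\ref{lem:disjoint_union_partitionable}, and Lemma~\ref{lma:ls_to_partitionable}. The only difference is notational---the paper writes the target dimension as $v$ and the inductive input as $v-4$, whereas you write the inductive input as $v$ and the target as $v+4$.
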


\begin{proof}
	We proceed by induction over $v$.
	Let $v \geq 10$ be an integer with $v\equiv 2\mod* 4$.
	By $k = s = 3$ in Theorem~\ref{thm:avoidpartition} and the same notational convention as in Example~\ref{ex:construction}, we get the decomposition type
	\[
		\gaussm{v}{3} = \gaussm{3}{0}*\gaussm{v-4}{3} \;\cup\; \gaussm{4}{1}*\gaussm{v-5}{2} \;\cup\; \gaussm{5}{2}*\gaussm{v-6}{1} \;\cup\; \gaussm{6}{3}*\gaussm{v-7}{0}\text{.}
	\]
	into avoiding joins.
	By the induction hypothesis, $\gaussm{v-4}{3}$ and $\gaussm{6}{3}$ are $(N,2)$-partitionable.
	From the derived large sets, we get that $\gaussm{v-5}{2}$ and $\gaussm{5}{2}$ are $(N,1)$-partitionable and that $\gaussm{v-6}{1}$ and $\gaussm{4}{1}$ are $(N,0)$-partitionable.
	Furthermore, $\gaussm{v-7}{0}$ and $\gaussm{3}{0}$ are $(N,-1)$-partitionable.
	Now by the Basic Lemma~\ref{lem:basic} and Lemma~\ref{lem:disjoint_union_partitionable}, $\gaussm{v}{3}$ is $(N,2)$-partitionable and thus by Lemma~\ref{lma:ls_to_partitionable}, there exists an $\LS_q[N](2,3,v)$.
\end{proof}

It is easily checked that a halving $\LS_q[2](2,3,6)$ is admissible if and only if $q$ is odd.
In this case, $\LS_q[2](2,3,v)$ is admissible if and only if $v \geq 6$ and $v\equiv 2\mod* 4$.
So by Theorem~\ref{thm:halving_example} and Lemma~\ref{lma:one_parameter_recursion}, we get:

\begin{cor}
	For $q\in\{3,5\}$, the large set parameters $\LS_q[2](2,3,v)$ are admissible if and only they are realizable.
\end{cor}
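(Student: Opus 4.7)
The plan is to combine the existence of the base halving with the recursion already established, after verifying the admissibility characterization. The direction from realizability to admissibility is immediate from Lemma~\ref{lem:ls_integrality}, so the entire content of the corollary lies in the reverse direction. Accordingly, I would first pin down precisely when $\LS_q[2](2,3,v)$ is admissible for $q \in \{3,5\}$, and then apply the recursion to propagate the base case to all such $v$.

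First I would verify the admissibility claim stated in the paragraph preceding the corollary. By Lemma~\ref{lem:ls_integrality}, admissibility of $\LS_q[2](2,3,v)$ is equivalent to $2 \mid \gauss{v-i}{3-i}{q}$ for $i \in \{0,1,2\}$. For $v = 6$ a direct calculation shows this holds precisely when $q$ is odd, yielding the smallest admissible case. For general $v$, using the Kummer-type criterion for divisibility of Gaussian binomial coefficients (from \cite{Fray,Knuth-Wilf-1989-JRAM396:212-219} as referenced in the text), one checks that when $q$ is odd the three divisibility conditions collapse to the single arithmetic condition $v \geq 6$ with $v \equiv 2 \pmod 4$. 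This is the routine step, essentially a sub-binomial computation on $2$-adic valuations.

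Second, I would invoke the existence result. By Theorem~\ref{thm:halving_example}, an $\LS_q[2](2,3,6)$ exists for both $q = 3$ and $q = 5$, settling the base case of the recursion. Lemma~\ref{lma:one_parameter_recursion} then immediately lifts this to the existence of $\LS_q[2](2,3,v)$ for every integer $v \geq 6$ with $v \equiv 2 \pmod 4$. Together with the admissibility characterization from the previous paragraph, this shows that every admissible parameter set $\LS_q[2](2,3,v)$ for $q \in \{3,5\}$ is realizable, completing the proof.

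I do not expect any serious obstacle here: the two nontrivial ingredients (the computer-constructed halvings and the avoiding-join recursion) have already been established, and the remaining work is the admissibility bookkeeping. The only place one could stumble is in carefully handling the $2$-adic valuation of $\gauss{v-i}{3-i}{q}$ for odd $q$ across the three relevant values of $i$; I would treat this uniformly by reducing to the fact that for odd $q$ one has $v_2(q^n - 1) = v_2(q^{\gcd(n,2)} - 1) + v_2(n/\gcd(n,2))$ (the \enquote{lifting-the-exponent} identity), which makes the resulting congruence $v \equiv 2 \pmod 4$ transparent.
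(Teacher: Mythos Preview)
Your proposal is correct and follows essentially the same route as the paper: the paper's argument consists of the paragraph immediately preceding the corollary, which notes the admissibility characterization (left as \enquote{easily checked}) and then invokes Theorem~\ref{thm:halving_example} together with Lemma~\ref{lma:one_parameter_recursion}. Your additional detail on the $2$-adic valuation via the lifting-the-exponent identity is a welcome elaboration of what the paper skips over, but the overall structure is identical.
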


\begin{thm}
	\label{thm:two_parameter_recursion}
	If there exists an $\LS_q[N](2,3,6)$, then there exists an $\LS_q[N](2,k,v)$ for all integers $v$ and $k$ with $v\geq 6$, $v\equiv 2\mod* 4$, $3\leq k \leq v-3$ and $k\equiv 3\mod* 4$.
\end{thm}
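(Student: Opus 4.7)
The plan is to proceed by induction on $k$ in steps of four, with the base case $k = 3$ supplied by Lemma~\ref{lma:one_parameter_recursion}. For the inductive step, assume the statement holds for some $k \equiv 3 \pmod{4}$ and fix $v \geq k+7$ with $v \equiv 2 \pmod{4}$. Applying Theorem~\ref{thm:coverpartition} with $s = 3$ to $\gauss{V}{k+4}{q}$ produces, in the shorthand of Example~\ref{ex:construction}, the decomposition into covering joins
\[
    \gaussm{v}{k+4} = \bigcup_{i=0}^{v-k-4} \gaussm{v-4-i}{k} \ast \gaussm{3+i}{3}\text{.}
\]
The strategy is to invoke the Basic Lemma~\ref{lem:basic} on each piece with a pair $(t_1,t_2)$ satisfying $t_1+t_2=1$, obtaining $(N,2)$-partitionability of the piece; then combine the pieces via Lemma~\ref{lem:disjoint_union_partitionable} and conclude by Lemma~\ref{lma:ls_to_partitionable}.

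The choice of $(t_1,t_2)$ for each $i$ is dictated by $i \bmod 4$. For $i \equiv 0$, the inductive hypothesis applies to $v-4-i$ (since $v-4-i \equiv 2 \pmod{4}$ and $v-4-i \geq k+3$), supplying the halving $\LS_q[N](2,k,v-4-i)$, so we use $(t_1,t_2) = (2,-1)$, exploiting that every set is trivially $(N,-1)$-partitionable. For $i \equiv 3$, symmetrically, Lemma~\ref{lma:one_parameter_recursion} supplies a halving of $\gaussm{3+i}{3}$, and we use $(t_1,t_2) = (-1,2)$. For $i \equiv 1$, we use $(t_1,t_2) = (1,0)$: the first factor becomes $(N,1)$-partitionable as the residual (Lemma~\ref{lem:resLS}(iv)) of the inductive halving $\LS_q[N](2,k,v-3-i)$, while the second factor becomes $(N,0)$-partitionable as the double residual of $\LS_q[N](2,3,5+i)$ obtained from Lemma~\ref{lma:one_parameter_recursion}. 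The case $i \equiv 2$ is dual, with $(t_1,t_2) = (0,1)$, using a double residual of $\LS_q[N](2,k,v-2-i)$ and a single residual of $\LS_q[N](2,3,4+i)$. Because $v \equiv 2$ and $k \equiv 3 \pmod{4}$ force $v-k \equiv 3 \pmod{4}$, the range $\{0,\ldots,v-k-4\}$ distributes cleanly across the four residue classes, and the case-specific upper bound on $i$ aligns exactly with the validity range of the invoked halving.

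The main bookkeeping obstacle is verifying that every residual or reduction invoked lands at a parameter set $(t',k',v')$ with $v' \equiv 2 \pmod{4}$ and $v' \geq \max(6,k'+3)$, so that the inductive hypothesis (or Lemma~\ref{lma:one_parameter_recursion} for the second factor) genuinely applies. A direct arithmetic check shows that the auxiliary first-factor dimensions always lie in $\{k+3, k+7, \ldots, v-4\}$ and the auxiliary second-factor dimensions always lie in $\{6, 10, \ldots, v-k-1\}$, both entirely within the admissible range. Once this verification is complete, the Basic Lemma delivers $(N,2)$-partitionability on every piece of the covering-join decomposition, closing the induction.
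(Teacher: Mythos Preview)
Your proof is correct and follows essentially the same approach as the paper: induction on $k$ in steps of four with base case from Lemma~\ref{lma:one_parameter_recursion}, the covering-join decomposition from Theorem~\ref{thm:coverpartition} with $s=3$, and the same four-way case split on $i \bmod 4$ handled via residuals and the Basic Lemma~\ref{lem:basic}. The only differences are cosmetic---you phrase the induction as passing from $k$ to $k+4$ rather than from $k-4$ to $k$, and you spell out the bookkeeping on the auxiliary dimensions a bit more explicitly than the paper does (note, though, that the theorem is stated for general $N$, so your use of the word ``halving'' for $\LS_q[N]$ is a slight terminological slip).
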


\begin{proof}
	Let $v \geq 6$ be an integer with $v\equiv 2\mod* 4$.
	We proceed by induction over $k$, considering all $v$ simultaneously.
	The base case $k = 3$ was shown in Lemma~\ref{lma:one_parameter_recursion}.
	Now let $k\in\{7,\ldots,v-3\}$ with $k\equiv 3\mod* 4$.
	Plugging $s = 3$ into Theorem~\ref{thm:coverpartition}, we get the decomposition type
	\begin{align}
		\label{eq:two_parameter_recursion:decomposition}
		\gaussm{v}{k} & = \bigcup_{i=0}^{v-k} \gaussm{v-4-i}{k-4} * \gaussm{3+i}{3}
	\end{align}
	into covering joins.

	For $i \equiv 0\mod* 4$, $v - 4 - i \equiv v \equiv 2\mod* 4$ and $k-4\equiv k \equiv 3\mod* 4$, so $\gaussm{v-4-i}{k-4}$ is $(N,2)$-partitionable by the induction hypothesis.
	Besides that, $\gaussm{3+i}{3}$ is $(N,-1)$-partitionable, so by the Basic Lemma~\ref{lem:basic}, $\gaussm{v-4-i}{k-4} * \gaussm{3+i}{3}$ is $(N,2)$-partitionable for all $i\in\{0,\ldots,v-k\}$ with $i\equiv 0\mod* 4$. 

	For $i \equiv 1\mod* 4$, $v - 4 - i \equiv 1\mod* 4$, so $v - 3 - i\equiv 2\mod* 4$.
	By the induction hypothesis, there exists an $\LS_q[N](2,k-4,v-3-i)$.
	Taking the residual, there exists an $\LS_q[N](1,k-4,v-4-i)$.
	So $\gaussm{v-4-i}{k-4}$ is $(N,1)$-partitionable.
	Furthermore, by $5+i \equiv 2\mod* 4$ and the induction hypothesis there exists an $\LS_q[N](2,3,5+i)$.
	Taking the residual twice, we get an $\LS_q[N](0,3,3+i)$.
	The Basic Lemma~\ref{lem:basic} implies that $\gaussm{v-4-i}{k-4} * \gaussm{3+i}{3}$ is $(N,2)$-partitionable for all $i\in\{0,\ldots,v-k\}$ with $i\equiv 1\mod* 4$.

	Similarly, we see that $\gaussm{v-4-i}{k-4} * \gaussm{3+i}{3}$ is $(N,2)$-partitionable for all $i\in\{0,\ldots,v-k\}$ with $i\equiv 2,3\mod* 4$, too, see Table~\ref{tbl:two_parameter_recursion}.
	\begin{table}
	\caption{Analysis of the decomposition in the proof of Theorem~\ref{thm:two_parameter_recursion}}
	\label{tbl:two_parameter_recursion}
	\noindent\centering
		$\begin{array}{cccc}
			i                & \gaussm{v-4-i}{k-4}        & \gaussm{3+i}{3} & \gaussm{v-4-i}{k-4} * \gaussm{3+i}{3} \\
			\hline
			i\equiv 0\mod* 4 & (N,2)\text{-part.} & (N,-1)\text{-part.} & (N,2)\text{-part.} \\
			i\equiv 1\mod* 4 & (N,1)\text{-part.} & (N,0)\text{-part.} & (N,2)\text{-part.} \\
			i\equiv 2\mod* 4 & (N,0)\text{-part.} & (N,1)\text{-part.} & (N,2)\text{-part.} \\
			i\equiv 3\mod* 4 & (N,-1)\text{-part.} & (N,2)\text{-part.} & (N,2)\text{-part.} 
		\end{array}$
	\end{table}
	Now the application of Lemma~\ref{lem:disjoint_union_partitionable} to the decomposition~\eqref{eq:two_parameter_recursion:decomposition} and Lemma~\ref{lma:ls_to_partitionable} yields the existence of an $\LS_q[N](2,k,v)$.
\end{proof}

\begin{rem}
\item
\begin{enumerate}[(i)]
\item Based on the fairly general machinery of $(N,t)$-partitionable sets in Section~\ref{sect:n-t-partitionable}, one can create more statements in the style of Theorem~\ref{thm:two_parameter_recursion} by starting with a suitable decomposition of $V$ into joins.
The problem is that always some example of a large set is needed as a starting point for the recursion.
Our above decomposition was tailored to fit the large set parameters $\LS_q[2](2,3,6)$ discussed in Section~\ref{sect:n-t-partitionable}.
\item The parameter set in Theorem~\ref{thm:two_parameter_recursion} is closed under taking duals:
If $v$ and $k$ are integers with $v\geq 6$, $v\equiv 2\mod* 4$, $3\leq k \leq v-3$ and $k\equiv 3\mod* 4$, then the dual of an $\LS_q[N](2,k,v)$ is an $\LS_q[N](2,v-k,v)$ with $3\leq v-k \leq v-3$ and $v-k\equiv 2-3\equiv 3\mod* 4$.
\end{enumerate}
\end{rem}

\begin{cor}
	\label{cor:halving_series}
	Let $q\in\{3,5\}$, and $v,k$ be integers with $v\geq 6$, $v\equiv 2\mod* 4$, $3\leq k\leq v-3$ and $k\equiv 3\mod* 4$.
	\begin{enumerate}[(a)]
		\item\label{cor:halving_series:halving} There exists a halving $\LS_q[2](2,k,v)$.
		\item\label{cor:halving_series:designs} There exists a $2$-$\left(v,k,\gauss{v-2}{k-2}{q}/2\right)_q$ subspace design.
	\end{enumerate}
\end{cor}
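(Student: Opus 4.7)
The proof is essentially a direct combination of the two main results already established in the paper, so my plan is to assemble them cleanly rather than to introduce new ideas.

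For part~\ref{cor:halving_series:halving}, I would start from Theorem~\ref{thm:halving_example}, which guarantees the existence of an $\LS_q[2](2,3,6)$ for $q\in\{3,5\}$. This is exactly the hypothesis of Theorem~\ref{thm:two_parameter_recursion} with $N=2$. Applying that theorem verbatim with $N=2$ immediately yields an $\LS_q[2](2,k,v)$ for every admissible pair $(v,k)$ in the stated range, i.e.\ $v\geq 6$ with $v\equiv 2\pmod 4$ and $3\leq k\leq v-3$ with $k\equiv 3\pmod 4$. No extra argument is required here.

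For part~\ref{cor:halving_series:designs}, I would invoke Remark~\ref{rem:ls}\ref{rem:ls:halving}. By part~\ref{cor:halving_series:halving} there exists a halving $\{(V,\mathcal{B}), (V,\gauss{V}{k}{q}\setminus\mathcal{B})\}$ of $\gauss{V}{k}{q}$ into two subspace designs with parameters $2\text{-}(v,k,\lambda)_q$, where by Remark~\ref{rem:ls}\ref{rem:ls:lambda} the index necessarily equals $\lambda = \gauss{v-2}{k-2}{q}/2$. Taking either one of the two blocks of the partition therefore gives a $2$-$(v,k,\gauss{v-2}{k-2}{q}/2)_q$ subspace design, as required.

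There is no real obstacle to overcome: all the combinatorial work (the Kramer–Mesner computer construction of the seed halving, the decomposition theorems for the Graßmannian, and the Basic Lemma for $(N,t)$-partitionable sets) has already been carried out, and the corollary is merely the concrete specialization of Theorem~\ref{thm:two_parameter_recursion} to the cases $q\in\{3,5\}$ in which the required seed halving is known to exist, together with the translation between halvings and single subspace designs.
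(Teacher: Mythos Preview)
Your proposal is correct and follows essentially the same approach as the paper: invoke Theorem~\ref{thm:halving_example} to obtain the seed $\LS_q[2](2,3,6)$, feed it into Theorem~\ref{thm:two_parameter_recursion} with $N=2$ for part~\ref{cor:halving_series:halving}, and then translate the halving into the stated design parameters via Remark~\ref{rem:ls}\ref{rem:ls:lambda}/\ref{rem:ls:halving} for part~\ref{cor:halving_series:designs}.
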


\begin{proof}
	For part~\ref{cor:halving_series:halving}, apply Theorem~\ref{thm:two_parameter_recursion} to the halvings in Theorem~\ref{thm:halving_example}.
	The design parameters of the halvings are given in part~\ref{cor:halving_series:designs} .
\end{proof}

Our knowledge for the existence of $\LS_q[2](2,k,v)$ for $q\in\{3,5\}$ is shown in Table~\ref{tbl:ueberblick}. 
A minus sign indicates that the parameters are not admissible, and a question mark that the parameters are admissible, but the realizability is open.
In the case that Corollary~\ref{cor:halving_series}\ref{cor:halving_series:halving} yields the existence, the parameter $k$ is displayed.
Because of duality, only the parameter range $3\leq k \leq v/2$ is shown.

\begin{table}
\caption{Admissibility and realizability of $\LS_q[2](2,k,v)$, $q\in\{3,5\}$}
\label{tbl:ueberblick}
\noindent\centering\begin{tiny} 
\begingroup
\setlength\tabcolsep{0pt}
 \begin{tabular}{*{33}{>{\centering\arraybackslash}p{2.4mm}}@{\hskip 2mm}|@{\hskip 1mm}c@{\hskip 1mm}|} 
 & & & & & & & & & & & & & & & & & & & & & & & & & & & & & & & & & $\mathbf{v}$ \\
 & & & & & & & & & & & & & & & & & & & & & & & & & & & & & & & &3& 6 \\
 & & & & & & & & & & & & & & & & & & & & & & & & & & & & & & &-& & 7 \\
 & & & & & & & & & & & & & & & & & & & & & & & & & & & & & &-& &-& 8 \\
 & & & & & & & & & & & & & & & & & & & & & & & & & & & & &-& &-& & 9 \\
 & & & & & & & & & & & & & & & & & & & & & & & & & & & &3& &?& &?& 10 \\
 & & & & & & & & & & & & & & & & & & & & & & & & & & &-& &?& &?& & 11 \\
 & & & & & & & & & & & & & & & & & & & & & & & & & &-& &-& &?& &?& 12 \\
 & & & & & & & & & & & & & & & & & & & & & & & & &-& &-& &-& &?& & 13 \\
 & & & & & & & & & & & & & & & & & & & & & & & &3& &-& &-& &-& &7& 14 \\
 & & & & & & & & & & & & & & & & & & & & & & &-& &-& &-& &-& &-& & 15 \\
 & & & & & & & & & & & & & & & & & & & & & &-& &-& &-& &-& &-& &-& 16 \\
 & & & & & & & & & & & & & & & & & & & & &-& &-& &-& &-& &-& &-& & 17 \\
 & & & & & & & & & & & & & & & & & & & &3& &?& &?& &?& &7& &?& &?& 18 \\
 & & & & & & & & & & & & & & & & & & &-& &?& &?& &?& &?& &?& &?& & 19 \\
 & & & & & & & & & & & & & & & & & &-& &-& &?& &?& &?& &?& &?& &?& 20 \\
 & & & & & & & & & & & & & & & & &-& &-& &-& &?& &?& &?& &?& &?& & 21 \\
 & & & & & & & & & & & & & & & &3& &-& &-& &-& &7& &?& &?& &?& &11& 22 \\
 & & & & & & & & & & & & & & &-& &-& &-& &-& &-& &?& &?& &?& &?& & 23 \\
 & & & & & & & & & & & & & &-& &-& &-& &-& &-& &-& &?& &?& &?& &?& 24 \\
 & & & & & & & & & & & & &-& &-& &-& &-& &-& &-& &-& &?& &?& &?& & 25 \\
 & & & & & & & & & & & &3& &?& &?& &?& &7& &-& &-& &-& &11& &?& &?& 26 \\
 & & & & & & & & & & &-& &?& &?& &?& &?& &-& &-& &-& &-& &?& &?& & 27 \\
 & & & & & & & & & &-& &-& &?& &?& &?& &-& &-& &-& &-& &-& &?& &?& 28 \\
 & & & & & & & & &-& &-& &-& &?& &?& &-& &-& &-& &-& &-& &-& &?& & 29 \\
 & & & & & & & &3& &-& &-& &-& &7& &-& &-& &-& &11& &-& &-& &-& &15& 30 \\
 & & & & & & &-& &-& &-& &-& &-& &-& &-& &-& &-& &-& &-& &-& &-& & 31 \\
 & & & & & &-& &-& &-& &-& &-& &-& &-& &-& &-& &-& &-& &-& &-& &-& 32 \\
 & & & & &-& &-& &-& &-& &-& &-& &-& &-& &-& &-& &-& &-& &-& &-& & 33 \\
 & & & &3& &?& &?& &?& &7& &?& &?& &?& &11& &?& &?& &?& &15& &?& &?& 34 \\
 & & &-& &?& &?& &?& &?& &?& &?& &?& &?& &?& &?& &?& &?& &?& &?& & 35 \\
 & &-& &-& &?& &?& &?& &?& &?& &?& &?& &?& &?& &?& &?& &?& &?& &?& 36 \\
 &-& &-& &-& &?& &?& &?& &?& &?& &?& &?& &?& &?& &?& &?& &?& &?& & 37 \\
3& &-& &-& &-& &7& &?& &?& &?& &11& &?& &?& &?& &15& &?& &?& &?& &19& 38 \\
\end{tabular} 
\endgroup
 \end{tiny} 
 \end{table}

\begin{table}
	\caption{Parameters of small halvings in Corollary~\ref{cor:halving_series}}
	\label{tbl:smallest_halvings}
	\noindent\resizebox{\linewidth}{!}{
		$\begin{array}{lllll}
		q & v & k & \lambda & \text{size} \\
		\hline
		3 & 6 & 3 & 20 & 16940 \\
		3 & 10 & 3,7 & 1640 & 9163363880 \\
		3 & 14 & 3,10 & 132860 & 4870846320040820 \\
		3 & 14 & 7 & 44558972694792920 & 213514388484588339982040 \\
		5 & 6 & 3 & 78 & 1279278 \\
		5 & 10 & 3,7 & 48828 & 312943420103028 \\
		5 & 14 & 3,10 & 30517578 & 76402444317336044321778 \\
		5 & 14 & 7 & 1913728386070579497083028 & 11681368214414934224094848999708028
		\end{array}$
	}
\end{table}

\begin{ex}
	In Table~\ref{tbl:smallest_halvings}, the halvings produced by Corollary~\ref{cor:halving_series} are listed up to $v = 14$, together with the $\lambda$-value and the size of the corresponding subspace designs.
\end{ex}

Finally, it is worth noting

\begin{cor}
	There are infinitely many nontrivial large sets of subspace designs with $t = 2$.
\end{cor}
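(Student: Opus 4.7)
The plan is to exhibit an explicit infinite family of nontrivial large sets with $t=2$ by specializing the parameters from Corollary~\ref{cor:halving_series}\ref{cor:halving_series:halving}. Concretely, I fix $q=3$ and $k=3$, and let $v$ range over the infinite set $\{6, 10, 14, 18, \ldots\}$, i.e., all integers $v \geq 6$ with $v \equiv 2 \mod* 4$. For each such $v$, the pair $(v,k) = (v,3)$ trivially satisfies $3 \leq k \leq v - 3$ and $k \equiv 3 \mod* 4$, so Corollary~\ref{cor:halving_series}\ref{cor:halving_series:halving} yields a halving $\LS_3[2](2,3,v)$.

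Next I would verify that each such halving actually witnesses the claim. By Definition~\ref{def:ls} a halving is a large set with $N = 2$, and by Remark~\ref{rem:ls} the trivial large sets are those with $N=1$; hence every halving is nontrivial. The ambient dimensions $v$ are pairwise distinct, so the parameter sets $\LS_3[2](2,3,v)$ are pairwise different and the corresponding large sets are pairwise non-isomorphic. This produces an infinite list of nontrivial large sets of subspace designs with $t = 2$.

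There is essentially no obstacle in this final step: the real work has already been done upstream in constructing the base halving $\LS_3[2](2,3,6)$ (Theorem~\ref{thm:halving_example}) and in the recursive machinery culminating in Theorem~\ref{thm:two_parameter_recursion}. If a more substantial family is desired, one can alternatively let both parameters vary, taking any $v \equiv 2 \mod* 4$ with $v \geq 6$ and any $k \equiv 3 \mod* 4$ with $3 \leq k \leq v-3$, or replace $q=3$ by $q=5$; in every case, Corollary~\ref{cor:halving_series}\ref{cor:halving_series:halving} supplies a halving and thus a nontrivial large set with $t = 2$.
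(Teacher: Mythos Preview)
Your proposal is correct and matches the paper's (implicit) argument: the corollary is stated without proof as an immediate consequence of Corollary~\ref{cor:halving_series}\ref{cor:halving_series:halving}, and your specialization to $q=3$, $k=3$, $v\in\{6,10,14,\ldots\}$ makes this explicit. The identification of ``nontrivial'' with $N\geq 2$ via Remark~\ref{rem:ls} is exactly right.
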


\section{Conclusion}
\label{sect:conclusion}
We conclude this article with a few open questions arising from the present work.
\begin{enumerate}[(i)]
	\item The two halvings $\LS_q[2](2,3,6)$ in Theorem~\ref{thm:halving_example} have been constructed by computer.
	Give a computer-free construction of those large sets.
	Can we find such a construction for every odd prime power $q$?
	
	\item For any prime power $q$, the parameters $\LS_q[q^2 + 1](2,3,6)$ are admissible in the sense of Lemma~\ref{lem:ls_integrality}.
	Can those large sets be realized for certain (all?) values of $q$?
	So far, not a single example with those parameters is known.
	For every such large set, Theorem~\ref{thm:two_parameter_recursion} would give an infinite two-parameter series of large sets.
	Furthermore, for odd prime powers $q$ the existence of an $\LS_q[q^2 + 1](2,3,6)$ would imply the existence of a halving $\LS_q[2](2,3,6)$ by Lemma~\ref{lem:ls_divisorsN}.

	\item For large sets of classical block designs ($q = 1$), the \emph{halving conjecture} states that all admissible halvings are realizable \cite[Sect.~5]{Hartman}.
	For $t = 2$, the conjecture has been proven in \cite{Ajoodani98}.
	Can anything be said about this conjecture in the $q$-analog case?

	\item In the recursion techniques of Section~\ref{sect:n-t-partitionable}, the parameters $q$ and $N$ are constants.
	Find recursion techniques which alter those values in a nontrivial way.
\end{enumerate}

\section*{Acknowledgement}
The authors would like to acknowledge the financial support provided by COST -- \emph{European Cooperation in Science and Technology}.
The authors are members of the Action IC1104 \emph{Random Network Coding and Designs over GF(q)}.

\end{document}